\newtheorem{theorem}{Theorem}[section]
\newtheorem{definition}[theorem]{Definition}
\newtheorem{proposition}[theorem]{Proposition}
\newtheorem{corollary}[theorem]{Corollary}
\newtheorem{lemma}[theorem]{Lemma}
\newtheorem{remark}[theorem]{Remark}
\newtheorem*{theorem-non}{Theorem}
\declaretheorem[name=Acknowledgements,numbered=no]{ack}
\theoremstyle{definition}
\newtheorem{example}[theorem]{Example}
\newcommand{\M}{\mathcal{M}}
\newcommand{\R}{\mathbb{R}}
\newcommand{\N}{\mathbb{N}}
\newcommand{\cF}{\mathcal{F}}
\newcommand{\cG}{\mathcal{G}}
\newcommand{\cO}{\mathcal{O}}
\newcommand{\cQ}{\mathcal{Q}}
\def\phi{\varphi}
\def\R{{\mathbb R}}
\def\N{{\mathbb N}}
\def\O{{\mathcal O}}
\def\F{{\mathcal F}}
\def\M{{\mathcal M}}
\def\S{{\mathcal S}}
\def\g{{\mathfrak{g}}}
\def\le{\leqslant}
\def\ge{\geqslant}
\def\e{\epsilon}
\def\F{\mathcal{F}}
\def\M{\mathcal{M}}
\begin{document}

\title[Ergodic optimization and zero temperature limits]{Ergodic optimization and zero temperature limits in negative curvature}
\date{\today}

\author[F. Riquelme]{Felipe Riquelme}
\address{IMA, Pontificia Universidad Cat\'olica de Valpara\'iso, Blanco Viel 596, Valpara\'iso, Chile.}
\email{\href{felipe.riquelme@pucv.cl}{felipe.riquelme@pucv.cl}}
\urladdr{\href{http://ima.ucv.cl/academicos/felipe-riquelme/}{http://ima.ucv.cl/academicos/felipe-riquelme/}}
\thanks{F.R. was partially supported by PUCV DI Emergente N$^o$039.401/19 and FONDECYT Iniciaci\'on N$^o$11190461}

 \author[A.~Velozo]{Anibal Velozo}  \address{Department of Mathematics, Yale University, New Haven, CT 06511, USA.}
\email{\href{anibal.velozo@gmail.com}{anibal.velozo@yale.edu}}
\urladdr{\href{https://gauss.math.yale.edu/~av578/}{https://gauss.math.yale.edu/~av578/}}

\begin{abstract}
In this paper we study aspects of the ergodic theory of the geodesic flow on a non-compact negatively curved manifold. It is a well known fact that every continuous potential on a compact metric space has a maximizing measure. Unfortunately, for non-compact spaces this fact is not longer true. For the geodesic flow we provide a criterion that ensures the existence of a maximizing measure for uniformly continuous potentials.  We prove that the only obstruction to the existence of a maximizing measure is the full escape of mass phenomenon. To the best of our knowledge, this is the first general result on the existence of maximizing measures for non-compact topological spaces which does not require the potential to be coercive. We study zero temperature limits of equilibrium measures for a suitable family of potentials. We prove some convergence and divergence results for the limiting behaviour of such measures. Among some consequences we obtain that the geodesic flow has the intermediate entropy property and that equilibrium states are dense in the space of invariant probability measures. 
\end{abstract}

\maketitle

\section{Introduction}

Let $(X,T)$ be a topological dynamical system and $\phi:X\to\R$ a real-valued function, or potential. An important goal in the subject of \emph{ergodic optimization} is to describe the largest possible space average of $\phi$ among invariant probability measures. More specifically, we aim to understand the function 
$$\beta(\phi):=\sup_{\mu\in \M(T)}\int \phi d\mu,$$
where $\M(T)$ is the space of $T-$invariant probability measures, and to describe the measures that achieve the supremum (if those measures exist). A measure $m\in \M(T)$ such that $\beta(\phi)=\int \phi dm$ is called a \emph{maximizing measure} for the potential $\phi$. Natural questions in this context are, for instance, what conditions ensure the existence of a maximizing measure, and if such measures exist, what can be said about their dynamical properties and support. For a general overview on the history and discussion of several aspects of ergodic optimization we refer the reader to \cite{j1,bll,  j, bo}. 

When $X$ is a compact topological space and the maps $T$ and $\phi$ are continuous, the weak$^*$ compactness of $\M(T)$ ensures the existence of maximizing measures. In this context it is known that generic continuous functions have a unique ma\-ximizing measure (see \cite[Theorem 3.2]{j1}), and that for expanding transformations, the maximizing measure
of a generic Lipschitz function is supported on a single periodic orbit (see \cite[Theorem A]{c} and \cite[Theorem A]{hlmxz1}). For Axiom A flows it was recently proved that generic H\"older potentials have a unique maximizing measure supported on a periodic orbits (see \cite[Theorem A]{hlmxz}). 

Unlike the compact case, if $X$ is a non-compact topological space, it is possible that continuous (or Lipschitz) potentials do not have any maximizing measure (see \cite[Theorem 1.1]{iommi}, or Example \ref{ex:nomea}).   Despite this substantial difference between the compact and non-compact case, there are situations where the existence of maximizing measures has been obtained for non-compact spaces (mostly in the setting of symbolic dynamics). For transitive (one-sided) countable Markov shifts $(\Sigma,\sigma)$ and coercive potentials $\phi:\Sigma\to\R$ with bounded variation (equivalently, with summable variations and finite first variation), the existence of maximizing measures was obtained in \cite{bf} (see also \cite{jmu}). Roughly speaking, a potential $\phi$ is coercive if $\phi(x)$ goes to $-\infty$ as $x$ diverges. In particular, a coercive potential does not oscillate at infinity. For dynamical systems such as the full shift on a countable alphabet (which has infinite topological entropy) such assumption is not too restrictive: a  potential with bounded variation and finite pressure is necessarily coercive. 

In this work we study a class of non-compact dynamical systems which are geometric in nature: the geodesic flow over a non-compact negatively curved manifold. Our first goal is to provide a criterion that ensures the existence of maximizing measures for uniformly continuous potentials. In this context the dynamical system has finite topological entropy and the coercive assumption is restrictive; we ultimately seek for a criterion that applies to potentials that oscillate at infinity. 

In order to state our main results let us first introduce some notation (for details we refer the reader to Section \ref{prelim}). Let $(M,g)$ be a pinched negatively curved complete Riemannian manifold, $X:=T^1M$ the unit tangent bundle of $(M,g)$ and $(\g_t)_{t\in\R}$ the geodesic flow on $X$. Let $\M(\g)$ (resp. $\M_{\le1}(\g)$) be the space of $\g$-invariant probability (resp. sub-probability) measures on $X$. We say that a sequence $(\mu_n)_n$ in $\M(\g)$ \emph{converges vaguely} to $\mu\in \M_{\le1}(\g)$ if $\lim_{n\to+\infty}\int f d\mu_n=\int f d\mu$, for any compactly supported continuous function $f:X\to\R$. The mass of a measure $\mu\in \M_{\le1}(\g)$ is $\|\mu\|:=\mu(X)$.

For the rest of the introduction we assume that the non-wandering set of the geodesic flow on $X$ is non-compact, that is, $(M,g)$ is not convex cocompact. Our next definition plays an important role in this work. 

\begin{definition}\label{def:Minf}  For a continuous potential $\phi$ we define 
$$\beta_\infty(\phi):=\sup_{(\mu_n)_n\to 0}\limsup_{n\to+\infty}\int \phi d\mu_n,$$
where the supremum runs over sequences $(\mu_n)_n$ in $\M(\g)$ which converge vaguely to the zero measure. 
\end{definition}

Since $(M,g)$ is not convex cocompact, there are sequences in $\M(\g)$ that converge vaguely to the zero measure (see \cite[Theorem 4.19]{v2}), hence $\beta_\infty(\phi)$ is well defined. The importance of the quantity $\beta_\infty(\phi)$ lies in the following result and its corollary.

\begin{theorem}\label{main:a} Let $(M,g)$ be a complete pinched negatively curved manifold and $X=T^1M$ the unit tangent bundle.  Let $(\mu_n)_n$ be a sequence of ergodic measures in $\M(\g)$ which converges vaguely to $\mu\in \M_{\le1}(\g)$. Suppose that $\phi:X\to\R$ is a bounded from above and uniformly continuous potential. Then 
$$\limsup_{n\to+\infty}\int \phi d\mu_n\le \int \phi d\mu+(1-\|\mu\|)\beta_\infty(\phi).$$ 
\end{theorem}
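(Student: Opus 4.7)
The plan is to extract from each ergodic $\mu_n$ an auxiliary ergodic invariant measure $\nu_n$ that captures its ``escaping part,'' so that the definition of $\beta_\infty(\phi)$ can be applied to the sequence $(\nu_n)_n$.

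To set up, I fix $\epsilon>0$ and choose a compact set $K\subset X$ together with a continuous, compactly supported cutoff $\chi:X\to[0,1]$, identically $1$ on $K$, satisfying $\int\chi\,d\mu>\|\mu\|-\epsilon$. For each $n$, by the ergodicity of $\mu_n$ and Birkhoff's theorem, I select a $\mu_n$-generic point $x_n$ and a time $T_n\to\infty$ such that the empirical measure $\mu_n^{T_n}:=\frac{1}{T_n}\int_0^{T_n}\delta_{\g_s x_n}\,ds$ satisfies $\int f\,d\mu_n^{T_n}\to\int f\,d\mu_n$ for every $f$ in a countable dense family of continuous test functions, and in particular
$$\frac{1}{T_n}\int_0^{T_n}\phi(\g_s x_n)\,ds\longrightarrow\int\phi\,d\mu_n.$$
By a diagonal extraction I further arrange $\mu_n^{T_n}\to\mu$ vaguely. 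Letting $K^+\supset K$ be a slight compact enlargement with $\mu(\partial K^+)=0$, I partition $[0,T_n]=I_n\sqcup O_n$ where $I_n:=\{s:\g_s x_n\in K^+\}$, and set $\lambda_n:=|I_n|/T_n$; then $\lambda_n\to \mu(K^+)\in[\|\mu\|-\epsilon,\|\mu\|]$, so $1-\lambda_n\to 1-\|\mu\|+O(\epsilon)$.

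The inside contribution $A_n:=\int\phi\cdot\mathbf{1}_{K^+}\,d\mu_n^{T_n}$ is controlled by approximating $\mathbf{1}_{K^+}$ by $\chi$ and using the continuity and upper-boundedness of $\phi$ on $K^+$; vague convergence $\mu_n^{T_n}\to\mu$ yields $\limsup_n A_n\le\int_{K^+}\phi\,d\mu$, and a standard Radon argument gives $\int_{K^+}\phi\,d\mu\le\int\phi\,d\mu+O(\epsilon)$ (assuming $\int\phi\,d\mu>-\infty$; otherwise the inequality is trivial). The heart of the proof is the outside contribution $B_n:=\frac{1}{T_n}\int_{O_n}\phi(\g_s x_n)\,ds$. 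The set $O_n$ decomposes into finitely many maximal excursions outside $K^+$; I concatenate these and use the Anosov closing lemma (or specification) for the geodesic flow on its non-wandering set to shadow them by a closed geodesic, yielding an ergodic invariant probability measure $\nu_n$ supported outside a compact set slightly smaller than $K$. Uniform continuity of $\phi$ guarantees that the orbital average of $\phi$ along the periodic shadow differs from the true excursion average by $o(1)$, so $B_n=(1-\lambda_n)\int\phi\,d\nu_n+o(1)$. By letting the underlying compact exhaustion grow with $n$, the $\nu_n$ converge vaguely to the zero measure, whence $\limsup_n\int\phi\,d\nu_n\le\beta_\infty(\phi)$ by definition. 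Combining $A_n$ and $B_n$ and letting $\epsilon\to 0$ yields the claimed inequality.

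The main obstacle is the construction of the auxiliary measures $\nu_n$ in the outside estimate: one must simultaneously ensure (i) that they escape to infinity in the vague topology, so that $\beta_\infty(\phi)$ is the applicable upper bound, and (ii) that $\int\phi\,d\nu_n$ faithfully tracks the average of $\phi$ along the excursion segments. Point (ii) is where uniform continuity of $\phi$ is essential, since the closing procedure is only approximate. Point (i) requires a judicious choice of the compact exhaustion together with the closing/specification property of the geodesic flow on its non-wandering set, and for this one exploits the geometric structure of pinched negative curvature. A secondary technical point, if $\phi$ fails to be bounded below, is to run the argument with the truncations $\phi^M:=\max(\phi,-M)$ and let $M\to\infty$ at the end.
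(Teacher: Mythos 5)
Your first half is essentially the paper's argument (Proposition \ref{prop:premain}): pick a Birkhoff generic point of each ergodic $\mu_n$, split the orbit into the part near a fixed compact set and the excursions away from it, close up each excursion with the closing lemma, and use uniform continuity to control the shadowing error. The truncation $\max(\phi,h)$ for the case $\int\phi\,d\mu=-\infty$ also matches the paper.

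The genuine gap is in the outside estimate, precisely at the point you flag as the ``main obstacle.'' The auxiliary measures $\nu_n$ you build do \emph{not} converge vaguely to the zero measure, so the definition of $\beta_\infty(\phi)$ is not applicable to them. Three separate problems: (i) each excursion begins and ends on $\partial K^+$, so the shadowing periodic orbit passes within $\delta$ of a fixed compact set; (ii) the connecting segment supplied by transitivity/specification, needed to close the orbit (and, if you concatenate many excursions, needed between every consecutive pair), is completely uncontrolled and can run through the compact core, and its total length is proportional to the number of excursions; (iii) an orbit segment lying outside $K^+$ can spend its entire (arbitrarily long) duration inside a slightly larger compact set, so even the excursions themselves need not escape. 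Letting the compact exhaustion grow with $n$ does not repair this: the closing-lemma constants $N(\delta,K)$ and $L(\delta,K)$ blow up with $K$, and the inside/outside split with a fixed $K^+$ (which you need for $\lambda_n\to\mu(K^+)$) is no longer available. What the closed-up orbits honestly give you is only that they are invariant \emph{probability} measures, hence their $\phi$-average is at most $\beta(\phi)$; this yields the weaker inequality with $\beta(\phi)$ in place of $\beta_\infty(\phi)$ on the right-hand side.

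The missing idea is the penalization step that upgrades $\beta(\phi)$ to $\beta_\infty(\phi)$: fix a positive $\psi\in C_0(X)$, apply the $\beta$-version of the inequality to $\phi-t\psi$ for each $t$, use $\int\psi\,d\mu_n\to\int\psi\,d\mu$ to cancel the $\psi$-terms on both sides, and then let $t\to+\infty$ using the identity $\beta_\infty(\phi)=\lim_{t\to+\infty}\beta(\phi-t\psi)$ (Lemma \ref{lem:a}). The penalization forces near-maximizing sequences for $\phi-t\psi$ to lose all mass, which is how $\beta_\infty$ enters without ever having to make the shadowing orbits themselves escape.
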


\noindent
\textbf{Corollary \ref{cor:gapM}} \emph{Let $\phi$ be a bounded from above and uniformly continuous potential such that $\beta_\infty(\phi)<\beta(\phi)$. Then there exists a maximizing measure for $\phi$}.\\

To the best of our knowledge, this is the first general result about the existence of maximizing measures in the non-compact setting that goes beyond the coercive case (and symbolic dynamics). Moreover, our approach works for uniformly continuous potentials, which is a regularity assumption weaker than summable variations, a standard hypothesis in the symbolic setting. Note that if $\phi$ is coercive, then $\beta_\infty(\phi)=-\infty$.  The condition $\beta_\infty(\phi)<\beta(\phi)$ can be interpreted as a type of dynamical gap between space averages on a large compact part and its complement. Moreover, it impose no constraint on the behavior of $\phi$ at infinity.

Another consequence of Theorem \ref{main:a} is the following. Let $(\mu_n)_n$ be a sequence in $\M(\g)$ such that $\lim_{n\to+\infty}\int \phi d\mu_n=\beta(\phi)$, and suppose $\phi$ does not have a maximizing measure. Then $(\mu_n)_n$ converges vaguely to the zero measure. Equivalently, the only obstruction to the existence of a maximizing measure is the full escape of mass of sequences with space average approximating $\beta(\phi)$. This is analogous to results known for the entropy of sequences of invariant measure (see \cite[Theorem 5.2]{v2}). For further details we refer the reader to Proposition \ref{prop:a}.

Among measures with maximal space average some are  physically more relevant than others. This distinction among maximizing measures is done via the pressure function and involves thermodynamic formalism. Let us be more precise. Let $\phi$  be a potential such that $t\phi$ admits a unique equilibrium state $m_{t\phi}$ for large enough $t\in\R$. A \emph{ground state at zero temperature} for $\phi$ is an accumulation point of the sequence $(m_{t\phi})_t$ as $t$ goes to infinity. In the compact case it is well known that ground states at zero temperature are maximizing measures with largest  entropy among those.

For subshifts of finite type, it is proved in \cite{bre} that locally constant potentials have a unique ground state at zero temperature, or equivalently, that the sequence $(m_{t\phi})_t$ converges as $t$ goes to $+\infty$, whenever $\phi$ is a finite-range potential.  For countable Markov shifts with the BIP property and Markov potentials, the same result was obtained in \cite{kem} (see also \cite{mo}).  
The situation is different for more general potentials. Still in the context of symbolic dynamics, H\"older potentials for which the sequence of equilibrium states $(m_{t\phi})_t$   diverges have been constructed in \cite{ch} and \cite{crl}. For countable Markov shifts, the behavior of the sequence $(m_{t\phi})_t$ has also been studied without the  locally constant assumption. For summable potentials with bounded variation and finite Gurevich pressure, it is proved in \cite{fv} that ground states at zero temperature are maximizing measures and that   the entropy map $t\mapsto h(m_{t\phi})$ is continuous at $+\infty$. 

In this work we study zero temperature limits of a particular class of potentials. We denote by $\F$ the family of H\"older-continuous  potentials $\phi$ vanishing at infinity (see Section \ref{prelim}) and such that $\beta(\phi)>0$. For $\phi\in\F$ and large enough $t\in\R$, the potential $t\phi$ admits a unique equilibrium measure $m_{t\phi}$. In Section \ref{sec:gz} we prove the following result. 

\begin{theorem}\label{main:b} Let $\phi\in\F$. Every ground state at zero temperature of $\phi$ is a maximizing measure. Moreover, they all have the same measure-theoretic entropy.
\end{theorem}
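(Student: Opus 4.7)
The plan is to combine Theorem~\ref{main:a} with the observation that $\beta_\infty(\phi)=0$ for every $\phi\in\F$. This vanishing follows immediately from the fact that $\phi$ is continuous and vanishes at infinity: given $\varepsilon>0$, choose a compact $K\subset X$ with $|\phi|<\varepsilon$ off $K$ and a compactly supported continuous $\chi$ with $0\le\chi\le 1$ and $\chi\equiv 1$ on $K$; then vague convergence of $(\mu_n)_n$ to the zero measure applied to $\phi\chi$, combined with $\|\phi-\phi\chi\|_\infty\le\varepsilon$, yields $\limsup_n|\int\phi\, d\mu_n|\le\varepsilon$. Now let $\mu$ be a ground state, realized as a vague limit of $m_{t_n\phi}$ for some $t_n\to\infty$. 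Each $m_{t\phi}$ is the unique equilibrium state of the H\"older potential $t\phi$ and hence ergodic, so Theorem~\ref{main:a} gives $\limsup_n\int\phi\, dm_{t_n\phi}\le\int\phi\, d\mu$.

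I would next establish $\lim_{t\to\infty}\int\phi\, dm_{t\phi}=\beta(\phi)$ by a standard argument: the variational inequality $h(m_{t\phi})+t\int\phi\, dm_{t\phi}\ge h(\nu)+t\int\phi\, d\nu$, together with the uniform bound $h(m_{t\phi})\le\htop<\infty$ (finite topological entropy of the geodesic flow under our curvature assumption), yields $\int\phi\, dm_{t\phi}\ge\int\phi\, d\nu-\htop/t$ for every $\nu\in\M(\g)$, whence $\liminf_t\int\phi\, dm_{t\phi}\ge\beta(\phi)$ (the reverse being immediate). Combining with the previous inequality, $\beta(\phi)\le\int\phi\, d\mu$. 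The hypothesis $\beta(\phi)>0$ then forces $\|\mu\|=1$: if $\|\mu\|=0$ then $\int\phi\, d\mu=0<\beta(\phi)$, while if $0<\|\mu\|<1$ then $\mu/\|\mu\|\in\M(\g)$ and $\int\phi\, d\mu\le\|\mu\|\beta(\phi)<\beta(\phi)$, each contradicting $\int\phi\, d\mu\ge\beta(\phi)$. Hence $\mu$ is an invariant probability measure with $\int\phi\, d\mu=\beta(\phi)$, i.e.\ a maximizing measure, and since there is no escape of mass the vague convergence $m_{t_n\phi}\to\mu$ upgrades to weak$^*$.

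For the equal-entropy assertion, differentiating $h(m_{t\phi})=P(t\phi)-t\int\phi\, dm_{t\phi}$ and using $\tfrac{d}{dt}P(t\phi)=\int\phi\, dm_{t\phi}$ gives $\tfrac{d}{dt}h(m_{t\phi})=-t\cdot\tfrac{d}{dt}\int\phi\, dm_{t\phi}\le 0$, since $t\mapsto\int\phi\, dm_{t\phi}$ is non-decreasing by convexity of pressure. Thus $h_\infty:=\lim_{t\to\infty}h(m_{t\phi})\in[0,\infty)$ exists. Upper semi-continuity of entropy at probability measures with no escape of mass (available for the geodesic flow as in \cite{v2}) gives $h_\infty=\lim_n h(m_{t_n\phi})\le h(\mu)$ for every ground state $\mu$. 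Conversely, $h(\mu)+t\beta(\phi)\le P(t\phi)$ shows $h(\mu)\le P(t\phi)-t\beta(\phi)$, a quantity non-increasing in $t$ and thus converging to some $L\ge h(\mu)$; and since $h(m_{t\phi})=(P(t\phi)-t\beta(\phi))+t(\beta(\phi)-\int\phi\, dm_{t\phi})\ge P(t\phi)-t\beta(\phi)$, passing to the limit $t\to\infty$ yields $L\le h_\infty$. The chain $h_\infty\le h(\mu)\le L\le h_\infty$ collapses, so $h(\mu)=h_\infty$ for every ground state. The main obstacle is the upper semi-continuity of entropy at probability measures without escape of mass, which is the substantive non-compact ingredient required beyond classical thermodynamic formalism.
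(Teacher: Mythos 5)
Your argument is correct, and at its core it runs on the same engine as the paper's proof (which is split into Lemma \ref{lem:t1}, Lemma \ref{lem:t2} and Proposition \ref{prop:entropyGS}): the variational principle divided by $t$, the fact that $\phi\in C_0(X)$ makes $\mu\mapsto\int\phi\,d\mu$ vaguely continuous, and the upper semicontinuity of entropy from Theorem \ref{A} once full mass is secured. There are, however, two sub-arguments where you take a genuinely different and arguably cleaner route. First, for $\|\mu\|=1$: the paper (Lemma \ref{lem:t1}) first shows $\|\mu\|>0$ via convexity of the pressure, which forces $t\mapsto\int\phi\,dm_{t\phi}$ to be eventually strictly increasing; you instead derive $\int\phi\,d\mu\ge\beta(\phi)>0$ directly from $\liminf_t\int\phi\,dm_{t\phi}\ge\beta(\phi)$ (obtained from near-maximizing measures and the bound $h(m_{t\phi})\le\htop(\g)<\infty$), which kills both $\|\mu\|=0$ and $0<\|\mu\|<1$ at once and, unlike the paper's Lemma \ref{lem:t2}, does not need to invoke Proposition \ref{prop:a} to first produce an exact maximizing measure. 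Second, for the entropy statement: the paper shows $h(\mu)\le h(m_{t\phi})$ for all $t$ and then removes the dependence on the subsequence by an extraction argument, whereas you sandwich everything against $\lim_t\bigl(P(t\phi)-t\beta(\phi)\bigr)$, which as a byproduct identifies $h_\phi$ with that limit. Two small presentational caveats: your appeal to $\tfrac{d}{dt}P(t\phi)=\int\phi\,dm_{t\phi}$ is not needed and is slightly delicate to justify pointwise; the monotonicity of $t\mapsto\int\phi\,dm_{t\phi}$ and of $t\mapsto h(m_{t\phi})$ follows directly from writing the variational inequality at two parameters $s<t$ and adding, so you should phrase it that way. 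Also, your symbol $h_\infty$ for $\lim_t h(m_{t\phi})$ collides with the paper's entropy at infinity $h_\infty(\g)$; the paper calls your quantity $h_\phi$.
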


Among the consequences of Theorem \ref{main:b} we  obtain that the geodesic flow verifies the intermediate entropy property (see Corollary \ref{cor:iep}), and that the set of equilibrium states of potentials in $\F$ is dense in $\M_{\le1}(\g)$, relative to the vague topology (see Corollary \ref{density}). 
Finally, we construct potentials in $\F$ for which the zero temperature limits converge and diverge. Since zero temperature limits are probability measures (see Theorem \ref{main:b}), the divergence of $(m_{t\phi})_t$ is equivalent to say that the sequence has at least two  probability measures as accumulation points. 

\begin{theorem}\label{main:c} There are potentials $\phi\in \F$ such that $(m_{t\phi})_t$ converges as $t$ goes to $+\infty$, and potentials such that $(m_{t\phi})_t$ diverges as $t$ goes to $+\infty$. 
\end{theorem}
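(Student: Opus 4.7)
The plan is to exhibit explicit potentials in $\F$ realising each behaviour and then use Theorem \ref{main:b} to control the accumulation points of $(m_{t\phi})_t$.

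For the convergent case, pick a compact periodic orbit $\gamma_0$ of the geodesic flow inside the non-wandering set and set
$$\phi(v)=\eta(\dist(v,\gamma_0)),$$
where $\eta\colon[0,+\infty)\to[0,1]$ is a H\"older bump with $\eta(0)=1$, strictly decreasing near $0$, and supported in $[0,r]$ for some small $r>0$. Since $\dist(\cdot,\gamma_0)$ is $1$-Lipschitz, $\phi$ is H\"older, and since $T^1M$ is complete, the $r$-neighbourhood of $\gamma_0$ is compact, so $\phi$ is compactly supported and in particular vanishes at infinity. Moreover $\phi$ attains its maximum value $1$ exactly on $\gamma_0$, so any invariant probability measure $\mu$ with $\int\phi\,d\mu=1$ is supported on $\gamma_0$ and hence equals the unique orbital measure $\mu_{\gamma_0}$. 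Therefore $\beta(\phi)=1>0$, $\phi\in\F$, and $\mu_{\gamma_0}$ is the only maximizing measure. Theorem \ref{main:b} says every vague accumulation point of $(m_{t\phi})_t$ is a maximizing probability measure, so every such accumulation point equals $\mu_{\gamma_0}$, and the sequence converges to $\mu_{\gamma_0}$.

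For the divergent case, the strategy is to adapt the symbolic construction of Chazottes-Hochman \cite{ch} (or Coronel-Rivera-Letelier \cite{crl}) to the geodesic flow. Choose two disjoint periodic orbits $\gamma_1,\gamma_2$ of equal period lying in a compact hyperbolic set $K\subset T^1M$ that admits a Markov coding by a subshift $\sigma\colon\Sigma\to\Sigma$, and on $\Sigma$ build a H\"older potential $\psi$ whose equilibrium states $(\mu_{t\psi})_t$ have at least two distinct probability-measure accumulation points as $t\to+\infty$, obtained by inserting geometrically nested "plateau" perturbations near the symbolic representations of $\gamma_1$ and $\gamma_2$ so that, along two interleaved temperature sequences, the Gibbs weight favours one side over the other. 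Transfer $\psi$ to a H\"older function on $K$ via the coding, extend it to a uniformly H\"older potential $\phi$ on $T^1M$ supported in a small compact neighbourhood of $K$, and note that $\phi>0$ on $\gamma_1\cup\gamma_2$ forces $\beta(\phi)>0$, so $\phi\in\F$. The equilibrium states of $t\phi$ for large $t$ match those of $t\psi$ under the coding, so the oscillation of $(\mu_{t\psi})_t$ produces at least two probability-measure accumulation points of $(m_{t\phi})_t$, i.e. divergence.

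The hard part is the divergent construction. The main obstacles are: (i) verifying that the Markov coding of $K$ transports H\"older regularity, finite pressure and uniqueness of equilibrium states faithfully, so that the Chazottes-Hochman oscillation survives on $T^1M$; (ii) extending $\phi$ outside $K$ without introducing spurious accumulation points at infinity, which is where Theorem \ref{main:a} combined with the vanishing of $\phi$ at infinity ensures that vague accumulation points of $(m_{t\phi})_t$ are genuine probability measures along the chosen temperature subsequences; and (iii) producing sharp enough partition-function estimates on the two sides of $K$ to pin down which of $\gamma_1,\gamma_2$ dominates at each scale and to conclude that the two limiting accumulation points are truly distinct. Once these steps are assembled, Theorem \ref{main:b} and the convergent construction above yield the dichotomy claimed in Theorem \ref{main:c}.
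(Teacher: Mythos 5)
Your convergent construction is correct and is essentially the paper's: the paper takes $\phi(x)=1/(1+d(x,\mathcal{O}(x_0)))$ for a periodic orbit, notes that $\phi$ attains its maximum exactly on the orbit, and concludes via unique ergodicity of the orbit plus Theorem \ref{main:b} (through Lemma \ref{lem:t3}) that $m_{t\phi}\to\mu_{\mathcal{O}(x_0)}$. Your bump-function variant works for the same reason.

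The divergent case, however, is a plan rather than a proof, and it contains a genuine gap. First, you never actually carry out the construction: the entire content --- building the nested ``plateau'' perturbations, calibrating which side dominates at each temperature scale, and verifying that the two accumulation points are distinct --- is deferred to your list of obstacles (i)--(iii). This is precisely the part the paper identifies as nontrivial; the authors explicitly remark that the symbolic topology makes the construction of the nested compact sets easy in \cite{crl}, whereas in the geometric setting ``this step is done in a completely different way.'' The paper replaces the symbolic cylinders by compact sets $K_n^{\pm}=p_n^{\pm}(\Omega_{\Gamma_n^{\pm}})$ coming from Schottky subgroups $\Gamma_n^{\pm}=\langle h_1^{\pm},(h_2^{\pm})^{2^n}\rangle$, uses Proposition \ref{prop:CE} to force the alternating entropy inequalities $h_{top}(\g|K_{n+1}^-)<h_{top}(\g|K_n^+)$ and $h_{top}(\g|K_{n+1}^+)<h_{top}(\g|K_n^-)$, and builds $\phi=\sum_k\delta_k\phi_k^{s(k)}$ with $\phi_k^{\pm}(x)=1/(1+d(x,Y_k^{\pm}))$; the oscillation of $(m_{t_n\phi})_n$ is then extracted from Corollary \ref{cor:t3} together with a Lipschitz-stability lemma for equilibrium states (Lemma \ref{lem:div}). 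Second, your key reduction --- ``the equilibrium states of $t\phi$ for large $t$ match those of $t\psi$ under the coding'' --- is false as stated: $m_{t\phi}$ maximizes $h(\mu)+t\int\phi\,d\mu$ over \emph{all} invariant measures on $T^1M$, not over measures supported on the coded set $K$, so it is not the push-forward of a symbolic equilibrium state and in general gives positive mass to the complement of $K$. Controlling where that mass concentrates as $t\to+\infty$ is exactly what Corollary \ref{cor:t3} and Lemma \ref{lem:div} are for, and without an argument of that kind the oscillation on the symbolic factor does not transfer to $(m_{t\phi})_t$.
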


\noindent {\bf Structure of the paper:} In Section \ref{prelim} we collect some known facts about the geodesic flow on a negatively curved manifold and set up notation used in following sections. In Section  \ref{sec3} we prove Theorem \ref{main:a} and some consequences of it. Finally, in Section \ref{sec:gz} we study zero temperature limits of potentials in $\F$, where we prove Theorem \ref{main:b} and Theorem \ref{main:c}. 

\begin{ack} The authors would like to thank G. Iommi for careful reading and a wealth of useful comments on the subject of this article. We would also like to thank Jairo Bochi for useful comments and clarifications. 
 \end{ack}

\section{Preliminaries}\label{prelim}

\subsection{Geometry}

Let $(M,g)$ be a complete Riemannian manifold. The unit tangent bundle of $(M,g)$ is defined as $T^1M=\{v\in TM: \|v\|_g=1\}$. For simplicity, we set $X:=T^1M$. Since $(M,g)$ is complete, the geodesic flow is well defined for all time and we denote it by $\g:=(g_t)_{t\in\R}$, where $g_t:X\to X$ is the time $t$ evolution of the geodesic flow. The geodesic flow on $(M,g)$ is the 
continuous time dynamical system $(X,\g)$. From now on we will always assume that $(M,g)$ has pinched negative sectional curvatures $-b^2\leq K_g\leq -a^2$, with $0<a<b<+\infty$, and that the partial derivatives of $K_g$ are uniformly bounded. For general facts about the geodesic flow we refer the reader to  \cite{eb, pps}.

The \emph{non-wandering set} of the geodesic flow is the set of points $x\in X$ so that for every open set $U$ containing $x$, and $N>0$, there exists $n>N$ so that $g_{-n}(U)\cap U\ne \emptyset$. The manifold $(M,g)$ is said to be \emph{convex cocompact} if the non-wandering set of the geodesic flow is compact. Since the dynamically relevant  part of the geodesic flow is supported on the non-wandering set of $X$, we will be mostly interested in the case where $(M,g)$ is not convex cocompact; if $(M,g)$ is convex cocompact then the dynamical system $(X,\g)$ is compact from the point of view of ergodic theory. 

 The universal cover of $M$ is denoted by $\widetilde{M}$ and let $\widetilde{X}=T^1\widetilde{M}$. Let $\pi:X\to M$ and $\widetilde{\pi}:\widetilde{X}\to\widetilde{M}$ be the canonical projections and $p:\widetilde{M}\to M$  the covering map. The metric on $M$ induced by the Riemannian metric $g$ is denoted by $d$. We define a metric on $X$, which we still denote by $d$, in the following way 
\begin{align}\label{distance} 
d(x,y)=\max_{t\in [0,1]} d(\pi g_t(x),\pi  g_t(y)),
\end{align}
for every $x,y\in X$. In a similar way we define a metric on $\widetilde{M}$ and $\widetilde{X}$, which we denote by $\widetilde{d}.$

We denote by $\partial_\infty \widetilde{M}$ the visual boundary of $\widetilde{M}$. The set $\widetilde{M}\cup\partial_\infty\widetilde{M}$ is the Gromov compactification of $\widetilde{M}$ and is homeomorphic to a closed ball. Using the ball model of real hyperbolic space we can identify this compactification with the round unit ball; this is the representation we have in mind even if the curvature is not constant.

The isometry group of $\widetilde{M}$ is denoted by Iso$(\widetilde{M})$.  It is well known that every isometry of $\widetilde{M}$ extends to a homeomorphism of $\widetilde{M}\cup\partial_\infty \widetilde{M}$ and that the fundamental group of $M$ acts--via Deck transformations--isometrically, freely and discontinuously on $\widetilde{M}$. We identify $\pi_1(M)$ with a subgroup $\Gamma\leqslant\text{Iso}(\widetilde{M})$. Since $\Gamma$ has no fixed points in $\widetilde{M}$, an isometry $\gamma\in\Gamma$ can be either hyperbolic or parabolic. A \emph{hyperbolic isometry} fixes  two points at infinity (pointwise) and the geodesic connecting those points (setwise). Such geodesic is called the \emph{axis} of the hyperbolic isometry. The axis of a hyperbolic isometry in $\Gamma$ projects to a closed geodesic on $M$. A \emph{parabolic isometry} fixes exactly one point at infinity. 

Let $G\leqslant\text{Iso}(\widetilde{M})$ be a discrete  torsion-free subgroup of isometries. Fix a reference point $o\in\widetilde{M}$. The \emph{limit set} of  $G$ is defined as $L(G)=\overline{G\cdot o}\setminus G\cdot o$, that is, the set of accumulation points of the $G-$orbit of $o$ in $\partial_\infty \widetilde{M}$. The limit set of $G$ is independent of the chosen reference point. The group $G$ is said to be \emph{non-elementary} if $\# L(G)>2$. If $G$ is non-elementary, then the set of points which are  fixed (pointwise) by a hyperbolic element in $G$ is dense in $L(G)$. For further details we refer the reader to  \cite[Section 3]{bow}.

An important number in this context is the \emph{critical exponent} $\delta(G)$ of $G$. It is defined by
$$\delta(G) = \limsup_{R\to+\infty}\frac{1}{R}\log\#\{g \in G : d(o,g \cdot o)\leq R\}.$$
Note that by triangle inequality the critical exponent does not depends on the re\-ference point $o$. The critical exponent can also be described as the critical exponent of convergence of the Poincar\'e series
$$P_G(s)=\sum_{g\in G} \exp(-sd(o,g\cdot o)).$$
Indeed, the series $P_G(s)$ converges for $s>\delta(G)$ and diverges for $s<\delta(G)$. The group $G$ is called \emph{divergent} (resp. \emph{convergent}) if $P_G(s)$ diverges (resp. converges) at $s=\delta(G)$. The following proposition provides useful information about critical exponents (see \cite[Proposition 2]{dop})  which we will need in later sections. 

\begin{proposition}\label{prop:CE} Let $G$ be a discrete torsion-free subgroup of isometries. If $H$ is a divergent subgroup of $G$ and $L(H)\neq L(G)$, then $\delta(H)<\delta(G)$.
\end{proposition}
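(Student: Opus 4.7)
The plan is to argue by contradiction, producing a Schottky-type subgroup of $G$ whose critical exponent strictly exceeds $\delta(H)$. Suppose $\delta(H)=\delta(G)$ (we automatically have $\delta(H)\le \delta(G)$ since $H\le G$, so the Poincar\'e series of $H$ is a sub-series of that of $G$). The hypothesis $L(H)\neq L(G)$, combined with $L(H)\subseteq L(G)$, yields a point $\xi\in L(G)\setminus L(H)$. Since fixed points of hyperbolic elements of $G$ are dense in $L(G)$, one can pick a hyperbolic $\gamma\in G$ both of whose fixed points $\gamma^{\pm}$ lie in an open neighborhood of $\xi$ disjoint from $L(H)$.

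Next I would run a ping-pong argument at infinity. Replacing $\gamma$ by a sufficiently large power $\gamma^N$, one obtains small disjoint neighborhoods $U^+,U^-$ of $\gamma^{+},\gamma^{-}$ in $\partial_\infty \widetilde{M}$ such that $\gamma^{N}(\partial_\infty\widetilde{M}\setminus U^-)\subset U^+$ and $\gamma^{-N}(\partial_\infty\widetilde{M}\setminus U^+)\subset U^-$, while $L(H)$ is contained in a compact set $K\subset \partial_\infty\widetilde{M}\setminus(U^+\cup U^-)$. The ping-pong lemma then shows that $\Gamma_0:=\langle H,\gamma^N\rangle$ is isomorphic to the free product $H\ast\langle\gamma^N\rangle$, with every non-trivial reduced word $w=h_0\gamma^{Nk_1}h_1\gamma^{Nk_2}\cdots$ acting in an interlacing way between the regions determined by the $U^{\pm}$ and the complement of $K$.

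The core of the argument is the displacement estimate. Using CAT$(-1)$ (or pinched negative) comparison and the geometric separation of the ping-pong regions, the displacement of such a reduced word satisfies
$$d(o,w\cdot o)\;=\;\sum_{i} d(o,h_i\cdot o)\;+\;|N|\,\ell(\gamma)\sum_j |k_j|\;+\;O(1),$$
with the additive error bounded independently of the word. Consequently the Poincar\'e series of $\Gamma_0$ at $s=\delta(H)$ is bounded below by
$$P_{\Gamma_0}(s)\;\gtrsim\;\sum_{n\ge 1}\bigl(C\,P_H(s)\bigr)^{n}\Bigl(\sum_{k\in\Z\setminus\{0\}} e^{-sN|k|\ell(\gamma)}\Bigr)^{n-1}$$
for some $C>0$. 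Since $H$ is divergent, $P_H(s)\to +\infty$ as $s\searrow \delta(H)$, so this geometric series has infinite terms already slightly above $\delta(H)$; standard continuity then forces $\delta(\Gamma_0)>\delta(H)$. As $\Gamma_0\le G$, we obtain $\delta(G)\ge \delta(\Gamma_0)>\delta(H)$, contradicting $\delta(H)=\delta(G)$.

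The main obstacle is the displacement additivity: making rigorous the $O(1)$ error in the formula above requires the quasi-geodesic/Morse-type lemma in pinched negative curvature (elements $w$ of ping-pong type trace out uniform quasi-geodesics between orbit points), which is where the assumption that $K_g$ is pinched and its derivatives are bounded enters. Once that geometric input is in hand, the combinatorial lower bound on $P_{\Gamma_0}(s)$ and the divergence hypothesis on $H$ combine straightforwardly to yield the strict gap.
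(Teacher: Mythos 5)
The paper does not prove this proposition at all: it is quoted verbatim from Dal'bo--Otal--Peign\'e \cite{dop}, Proposition 2, and your argument is essentially a reconstruction of their proof (enlarge $H$ by a hyperbolic $\gamma\in G$ whose fixed points avoid $L(H)$, form alternating words, bound the Poincar\'e series of the enlarged group from below by a geometric series in $P_H(s)$, and use divergence of $H$ to force divergence strictly above $\delta(H)$). The outline is correct, with two points to tighten. First, the ping-pong step is not literally as stated: for an \emph{infinite} factor $H$ one cannot expect every nontrivial $h\in H$ to map $U^+\cup U^-$ off itself; the standard fix (as in \cite{dop}) is to use proper discontinuity of the $H$-action on $\widetilde M\cup(\partial_\infty\widetilde M\setminus L(H))$ to discard finitely many elements of $H$, and then work with the injective set of words in the remaining letters rather than with a genuine free product decomposition of $\langle H,\gamma^N\rangle$ --- discarding finitely many terms does not affect the divergence of $P_H$ at $\delta(H)$. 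Second, for the lower bound on the Poincar\'e series you only need the triangle-inequality \emph{upper} bound $d(o,w\cdot o)\le\sum_i d(o,h_i\cdot o)+\sum_j d(o,\gamma^{Nk_j}\cdot o)$ together with injectivity of the word map; the full additivity up to $O(1)$ (which is where the Morse-lemma input would enter) is true but not needed, so the ``main obstacle'' you identify can be bypassed. With these adjustments the argument closes exactly as you indicate, and the contradiction framing is superfluous since one obtains $\delta(G)\ge\delta(\langle H',\gamma^N\rangle)>\delta(H)$ directly.
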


\begin{remark}\label{rk:CE} If $G$ is non-elementary we choose a hyperbolic element $h\in G$ and define $H=\langle h\rangle$. It is not difficult to prove that $\delta(H)=0$ and $L(H)\neq L(G)$, so by Proposition \ref{prop:CE} we conclude that $\delta(G)>0$. 
\end{remark}

\subsection{Ergodic theory}
We denote by $\M$ the set of all Borel non-negative measures on $X$. The mass of a measure $\mu\in\M$ is the number $\|\mu\|:=\mu(X)$. We say that $\mu$ is a sub-probability (resp. probability) measure if $\|\mu\|\in[0,1]$ (resp. $\|\mu\|=1$). A measure $\mu\in\M$ is $\g$-invariant if $\mu(g_t A)=\mu(A)$, for every $t\in\R$ and for every Borel set $A\subseteq X$. We denote by $\M(\g)$ the space of $\g$-invariant probability measures and $\M_{\le 1}(\g)$ the space of $\g$-invariant sub-probability measures in $\M$.  If $\mu\in \M_{\le 1}(\g)$ has positive mass, we define $\overline{\mu}:=\mu/\|\mu\|$, which is a probability measure. 

We say that $x\in X$ is a periodic point if $g_T(x)=x$, for some $T>0$.  Denote by $\cO(x)$ the periodic orbit associated to $x$. We define the periodic measure on $\cO(x)$, which we denote by $\mu_{\mathcal{O}(x)}$, by
\begin{align}\label{periodic}\mu_{\mathcal{O}(x)} = \frac{1}{T}\int_0^T \delta_{g_t(x)} dt,\end{align}
where $\delta_{y}$ denotes the Dirac measure on $y\in X$. Note that $\mu_{\mathcal{O}(x)}\in\M(\g)$ for any periodic point $x\in X$.

Let $C(X)$ be the space of continuous functions on $X$ and $C_{b}(X)$  the subset of bounded functions. The space of compactly supported continuous functions on $X$ is denoted by $C_c(X)$. We endow $C_b(X)$ and $C_c(X)$ with the uniform norm, that is, for $\varphi:X\to\R$ we set $\|\varphi\|_\infty=\sup_{x\in X} |\varphi(x)|$. 

We endow $\M(\g)$ with the \emph{narrow topology}. A sequence $(\mu_n)_n$ converges to $\mu$ in the narrow topology if for every $\varphi\in C_b(X)$, we have $\lim_{n\to+\infty}\int \varphi d\mu_n=\int \varphi d\mu$. In a similar way we endow $\M_{\le 1}(\g)$ with the \emph{vague topology}. A sequence $(\mu_n)_{n}$ converges vaguely to $\mu$ if for every $\varphi\in C_c(X)$, we have $\lim_{n\to+\infty}\int \varphi d\mu_n=\int \varphi d\mu.$

\begin{remark}In the compact case the narrow topology is the weak$^*$ topology. Clasically, the weak$^*$ topology is dual to the space of bounded and continuous functions. In the non-compact case we use the narrow topology, since the space of bounded and continuous functions is not necessarily dual to the space of probability measures. 
\end{remark}

In the non-compact case $\M(\g)$, endowed with the narrow topology, is typically not compact. For this reason we work mostly with $\M_{\le1}(\g)$, endowed with the vague topology, which makes it a compact metric space (see \cite[Corollary 13.31]{kl}). 


The relation between the narrow and vague topologies is given by the next simple and standard fact (see \cite[Theorem 13.16]{kl}).

\begin{proposition} \label{weakvag} Let $(\mu_n)_n$ be a sequence of measures in $\M(\g)$ and $\mu\in \M_{\le1}(\g)$. Then $(\mu_n)_n$ converges to $\mu$ in the narrow topology if and only if $(\mu_n)_n$ converges to $\mu$ in the vague topology and $\|\mu\|=1$.
\end{proposition}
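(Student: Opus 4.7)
The plan is to prove the two directions of Proposition \ref{weakvag} separately, with the forward direction being essentially immediate and the reverse direction requiring a standard cutoff argument based on tightness.

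For the forward direction, suppose $(\mu_n)_n \to \mu$ in the narrow topology. Since every $f \in C_c(X)$ satisfies $f \in C_b(X)$ (it is bounded by continuity and compact support), vague convergence follows at once. To see that $\|\mu\|=1$, apply narrow convergence to the constant function $\mathbf{1} \in C_b(X)$: we get $\|\mu\| = \int \mathbf{1}\, d\mu = \lim_n \int \mathbf{1}\, d\mu_n = \lim_n \|\mu_n\| = 1$, since each $\mu_n$ is a probability measure.

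For the reverse direction, assume $(\mu_n)_n \to \mu$ vaguely with $\|\mu\|=1$. Fix $f \in C_b(X)$ and $\eps > 0$. Since $X = T^1 M$ is a Polish space and $\mu$ is a Borel probability measure, $\mu$ is tight, so there exists a compact set $K \subseteq X$ with $\mu(X \setminus K) < \eps$. By Urysohn's lemma in a locally compact Hausdorff space we can find $\chi \in C_c(X)$ with $0 \le \chi \le 1$ and $\chi \equiv 1$ on $K$, so that $\int (1-\chi)\, d\mu < \eps$. Since $\chi \in C_c(X)$, vague convergence gives $\int \chi\, d\mu_n \to \int \chi\, d\mu > 1 - \eps$, hence for $n$ large enough $\int (1-\chi)\, d\mu_n < 2\eps$, using that each $\mu_n$ is a probability measure.

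The remainder is the standard three-term split: $f\chi \in C_c(X)$, so
$$\left|\int f\, d\mu_n - \int f\, d\mu\right| \le \left|\int f(1-\chi)\, d\mu_n\right| + \left|\int f\chi\, d\mu_n - \int f\chi\, d\mu\right| + \left|\int f(1-\chi)\, d\mu\right|.$$
The middle term vanishes in the limit by vague convergence applied to $f\chi$, while the first and third are bounded by $2\eps\|f\|_\infty$ and $\eps\|f\|_\infty$ respectively. Letting $\eps \to 0$ we obtain narrow convergence. The only nontrivial ingredient is the tightness of $\mu$, which is why the hypothesis $\|\mu\|=1$ is essential; without it the mass escaping to infinity in $(\mu_n)$ would not be testable by $C_b$ functions.
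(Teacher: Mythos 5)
Your proof is correct; the paper itself does not prove this proposition but simply cites \cite[Theorem 13.16]{kl}, and your argument is exactly the standard one behind that reference: the forward direction by testing against constants, and the converse by tightness of $\mu$ on the Polish space $T^1M$, a compactly supported cutoff from Urysohn's lemma, and the three-term split. The key observation that the hypothesis $\|\mu\|=1$ is what supplies the tightness/no-mass-loss needed to control the tails of the $\mu_n$ is precisely the point the paper is making with this proposition.
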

In other words, the only obstruction to converge in the narrow topology, is the possible escape of mass.  

\begin{remark}\label{ex:seq} It is proved in \cite[Theorem 4.19]{v2} that, if $(M,g)$ is not convex cocompact, then there exist sequences in $\M(\g)$ converging vaguely to the zero measure. In particular, a manifold $(M,g)$ is convex cocompact if and only if $\M(\g)$ is compact relative to the narrow topology. 
\end{remark}

The non-elementary assumption on $\Gamma$ ensures that the set of periodic orbits is dense in the non-wandering set of the geodesic flow. It turns out that, from the point of view of periodic measures, the same property holds on $\M(\g)$ (see Corollary 2.3 and Proposition 3.2 in \cite{cs}).

\begin{lemma}\label{prop:densitypm} The set of periodic measures is dense in $\M(\g)$ with respect to the narrow topology.
\end{lemma}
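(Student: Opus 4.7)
The plan is to combine three standard ingredients: an ergodic decomposition, Birkhoff's ergodic theorem, and an Anosov closing / specification argument for the geodesic flow on compact parts of the non-wandering set. First I would reduce the statement to approximating ergodic invariant measures by periodic ones. By the ergodic decomposition, any $\mu\in\M(\g)$ is the barycentre of a probability measure supported on the ergodic measures, hence narrow-close to a finite convex combination $\sum_{j=1}^m \alpha_j\nu_j$ of ergodic $\nu_j\in\M(\g)$. In turn, a finite convex combination of periodic measures is itself a narrow limit of a single periodic measure obtained by concatenating orbit segments from the given closed geodesics in proportions $\alpha_j$ and closing up via a specification-type argument; the non-elementary assumption on $\Gamma$ (cf.\ Remark \ref{rk:CE}) guarantees the abundance of hyperbolic axes needed to run this specification on compact subsets of the non-wandering set.

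The core step is then to approximate a given ergodic $\mu\in\M(\g)$ by a single periodic measure in the narrow topology. Fix bounded continuous test functions $f_1,\dots,f_k$ and $\varepsilon>0$. Since $\mu$ is a Borel probability measure on $X$, tightness yields a compact set $K_0\subset X$ with $\mu(K_0)>1-\varepsilon$. Applying Birkhoff's ergodic theorem simultaneously to $\mathbf{1}_{K_0}$ and to each $f_i$, I would select a $\mu$-generic point $x$ and an arbitrarily large time $T>0$ so that the orbit segment $\{g_t(x):t\in[0,T]\}$ spends at least a fraction $1-2\varepsilon$ of its time in $K_0$ and $\bigl|\tfrac{1}{T}\int_0^T f_i(g_t x)\,dt-\int f_i\,d\mu\bigr|<\varepsilon$ for every $i$. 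Restricted to a fixed compact hyperbolic block containing $K_0$, the geodesic flow is uniformly hyperbolic, and an Anosov-type closing lemma furnishes a periodic point $y$ of period $T'$ close to $T$ whose orbit $\eta$-shadows that of $x$ on $[0,T]$, with $\eta$ arbitrarily small as $T\to\infty$. Boundedness and uniform continuity of each $f_i$ on an $\eta$-neighbourhood of $K_0$ then give $\bigl|\int f_i\,d\mu_{\cO(y)}-\int f_i\,d\mu\bigr|=O(\varepsilon)$, which is the desired narrow convergence.

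The main obstacle is precisely the non-compactness of $X$ and the resulting absence of uniform hyperbolicity on the whole non-wandering set: the classical closing lemma and specification property are not directly available. The workaround is to confine the effective dynamics to a compact hyperbolic block $K_0$ where both hold in a uniform way, and to show that the short excursions of the generic orbit outside $K_0$ (occupying total length $O(\varepsilon T)$) contribute only $O(\varepsilon\|f_i\|_\infty)$ to each integral, which is harmless because the test functions are bounded and narrow convergence allows this uniform control. A further delicate point is ensuring that the closed-up orbit actually lies in the non-wandering set with a period controlled within $[T-1,T+1]$, which again relies on the non-elementary hypothesis via the density of hyperbolic fixed points in $L(\Gamma)$.
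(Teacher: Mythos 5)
Your overall strategy---ergodic decomposition, Birkhoff plus a closing argument for ergodic measures, and a specification-type concatenation for finite convex combinations---is the standard route, and it is essentially the argument behind the reference the paper relies on: the paper does not prove this lemma itself but cites \cite{cs} (Corollary 2.3 and Proposition 3.2 there). Your handling of the narrow topology is also correct in spirit: bound the contribution of the excursions outside $K_0$ by $O(\varepsilon\|f_i\|_\infty)$ using only boundedness of the test functions, and use uniform continuity only on a compact neighbourhood of $K_0$.

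There is, however, a genuine gap at the central closing step. As written, you invoke an Anosov-type closing lemma for the segment $\{g_t(x):t\in[0,T]\}$ knowing only that it spends a fraction at least $1-2\varepsilon$ of its time in $K_0$. That hypothesis does not let you close the orbit: a closing lemma requires the segment to nearly return to its initial point, i.e. $d(x,g_T(x))$ small, or at least $x\in K_0$ and $g_T(x)\in K_0$ together with a transitivity argument supplying a connecting segment of length at most $L(\delta,K_0)$---precisely the mechanism the paper sets up before Proposition \ref{prop:premain}. Nothing in your choice of $T$ guarantees this; you must in addition take $x\in K_0$ generic and apply Birkhoff to $1_{K_0}$ to extract arbitrarily large times $T$ with $g_T(x)\in K_0$, then close the segment augmented by the bounded connecting piece. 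The resulting period is $T+O(1)$ rather than ``close to $T$,'' which is harmless since the $f_i$ are bounded and the extra bounded time perturbs the time averages by $O(1/T)$. Relatedly, your claim that the shadowing constant $\eta$ can be taken arbitrarily small as $T\to+\infty$ is unjustified (and unnecessary): $\eta$ is governed by the scale $\delta$ at which the endpoints match, not by the length of the segment; a fixed small $\eta$, chosen from the uniform continuity of the $f_i$ on a compact neighbourhood of $K_0$, suffices. Note also that in pinched negative curvature the hyperbolicity is uniform globally, so the shadowing controls the whole excursion even where it leaves $K_0$, as in the paper's closing lemma whose hypotheses involve only the endpoints. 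With these repairs your argument goes through, and the concatenation step for convex combinations can be run with the same transitivity/local product/closing toolkit, since the finitely many periodic orbits involved all lie in a fixed compact subset of the non-wandering set.
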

 
Given $\mu\in \M(\g)$, we denote by $h(\mu)$ the measure-theoretic entropy of the time-one map $g_1$ of the geodesic flow relative to $\mu$. We refer to the map $h:\M(g)\to \R_{\ge 0}$ as the entropy map. The \emph{topological entropy} of the geodesic flow $(X,\g)$ is defined by
$$h_{top}(\g)=\sup\{h(\mu):\mu\in\M(\g)\}.$$  

The next result relates the topological entropy of the geodesic flow with the geometry of the manifold. In the compact case this is due to Manning \cite{ma} and was later extended by Sullivan \cite{su} for hyperbolic geometrically finite manifolds. The general case was proved by Otal and Peign\'e in \cite{op}.

\begin{theorem}\label{thm:op} Let $M=\widetilde{M}/\Gamma$ be a complete Riemannian manifold with pinched negative sectional curvatures. Assume $\Gamma$ is non-elementary. Then
$$h_{top}(\g)=\delta(\Gamma).$$
\end{theorem}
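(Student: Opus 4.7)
The plan is to prove the two inequalities $h_{top}(\g)\le \delta(\Gamma)$ and $h_{top}(\g)\ge \delta(\Gamma)$ separately. The first is a variational inequality which I would obtain from the Brin--Katok entropy formula together with a counting argument decoding the critical exponent; the second I would prove by exhibiting invariant measures whose entropy realises $\delta(\Gamma)$ via Patterson--Sullivan theory.

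For the lower bound, first construct via Patterson's convergence-exponent trick a $\Gamma$-invariant $\delta(\Gamma)$-conformal density $(\nu_x)_{x\in\widetilde{M}}$ supported on $L(\Gamma)$. Using the Hopf parametrization $\widetilde{X}\cong (\partial^2_\infty\widetilde{M}\setminus\mathrm{diag})\times\R$, form the $\Gamma$-invariant Radon measure
\[
d\widetilde{m}_{BMS}(\xi,\eta,t)=e^{2\delta(\Gamma)\langle\xi,\eta\rangle_o}\,d\nu_o(\xi)\,d\nu_o(\eta)\,dt,
\]
which descends to a $\g$-invariant Radon measure $m_{BMS}$ on $X$. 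When $m_{BMS}$ is finite, normalise it; a local product structure computation, using the $\delta(\Gamma)$-conformal scaling of $\nu_o$ under the Busemann cocycle, gives $h(m_{BMS})=\delta(\Gamma)$. When $m_{BMS}$ is infinite, approximate: pick a sequence of Schottky subgroups $\Gamma_n\le\Gamma$ with $\delta(\Gamma_n)\nearrow\delta(\Gamma)$, each convex cocompact and thus carrying a finite BMS measure on $T^1(\widetilde{M}/\Gamma_n)$ of entropy $\delta(\Gamma_n)$, and push these measures down to $X$ via the intermediate covering.

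For the upper bound, fix any $\mu\in\M(\g)$ and apply the Brin--Katok formula to the time-one map,
\[
h(\mu)=\int_X \lim_{\eps\to 0}\limsup_{T\to+\infty}\Bigl(-\frac{1}{T}\log\mu(B(v,T,\eps))\Bigr)\,d\mu(v).
\]
Lifting $v$ to $\widetilde{v}\in\widetilde{X}$, the dynamical ball $B(v,T,\eps)$ is comparable to the projection of a tubular neighbourhood of the geodesic segment from $\widetilde{\pi}(\widetilde{v})$ to $\widetilde{\pi}(g_T\widetilde{v})$. A $\Gamma$-equivariant covering argument combined with Sullivan's shadow lemma yields a lower bound of the form $\mu(B(v,T,\eps))\ge c(\eps) e^{-T(\delta(\Gamma)+\eta)}$ for arbitrarily small $\eta>0$, exploiting that the number of lattice points $\gamma\cdot o$ in a ball of radius $T$ grows no faster than $e^{T(\delta(\Gamma)+\eta)}$ by the very definition of $\delta(\Gamma)$. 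Letting $T\to+\infty$ and $\eps,\eta\to 0$ gives $h(\mu)\le\delta(\Gamma)$, and the variational principle concludes.

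The main obstacle is the upper bound in the non-compact setting, since cuspidal excursions of a $\mu$-typical orbit may a priori create anomalous concentrations of $\Gamma$-orbits in small shadows and spoil the naive counting estimate. The Otal--Peign\'e resolution, which I would adapt, partitions $M$ into a compact thick part and a finite family of cuspidal thin parts and treats each separately. Inside a cusp the relevant $\Gamma$-orbits are controlled by a parabolic subgroup $P\le\Gamma$; since $L(P)\subsetneq L(\Gamma)$ and $P$ is divergent, Proposition \ref{prop:CE} gives $\delta(P)<\delta(\Gamma)$, so the cuspidal contribution to the Brin--Katok limit is strictly subdominant. Combining this with the thick-part estimate produces the desired uniform upper bound $\delta(\Gamma)$.
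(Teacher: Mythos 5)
The paper does not prove Theorem \ref{thm:op}; it quotes it from Otal--Peign\'e \cite{op} (after Manning \cite{ma} and Sullivan \cite{su}), so your sketch is measured against the actual argument of \cite{op}. Your two-inequality outline has the right shape, but two load-bearing steps fail in the stated generality. For the upper bound you assume a decomposition of $M$ into a compact thick part plus finitely many cuspidal thin parts governed by parabolic subgroups $P$, and you deduce $\delta(P)<\delta(\Gamma)$ from Proposition \ref{prop:CE}. But the theorem concerns an arbitrary non-elementary $\Gamma$, which need not be geometrically finite, so no such finite thick--thin structure of the non-wandering dynamics exists in general; and even in the geometrically finite case Proposition \ref{prop:CE} requires $P$ to be divergent, which can fail in variable pinched negative curvature. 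Indeed there are geometrically finite examples with $\delta(P)=\delta(\Gamma)$ (see \cite{dop}, \cite{peigne}); these are exactly the non-SPR situations where $h_\infty(\g)=h_{top}(\g)$ and no measure of maximal entropy exists, yet $h_{top}(\g)=\delta(\Gamma)$ still holds, so ``the cuspidal contribution is strictly subdominant'' is not an available mechanism. The correct counting input is different: orbit segments of length $T$ that begin and end in a fixed compact set are covered by at most $e^{(\delta(\Gamma)+\eta)T}$ Bowen balls, because each such segment determines $\gamma\in\Gamma$ with $d(o,\gamma o)\le T+C$ regardless of how deep the excursion in between is. Relatedly, your intermediate claim $\mu(B(v,T,\varepsilon))\ge c(\varepsilon)e^{-T(\delta(\Gamma)+\eta)}$ for an arbitrary invariant $\mu$ does not follow from lattice-point counting or from the shadow lemma (which controls the Patterson--Sullivan density, not $\mu$): counting gives upper bounds on covering numbers, not lower bounds on the $\mu$-measure of a prescribed Bowen ball, and a pointwise bound of that form is essentially equivalent to the inequality $h(\mu)\le\delta(\Gamma)$ you are trying to prove, so as written this step is circular.

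The lower bound also rests on unproven or circular ingredients. In the finite $m_{BMS}$ case, asserting that a ``local product structure computation'' gives $h(m_{BMS})=\delta(\Gamma)$ is precisely the hard half of Otal--Peign\'e's theorem (it requires the shadow lemma and a genuine measure-theoretic covering argument), so it cannot be used as a one-line step. In the infinite case, the existence of Schottky subgroups $\Gamma_n\le\Gamma$ with $\delta(\Gamma_n)\nearrow\delta(\Gamma)$ is a nontrivial theorem in its own right, usually deduced from $h_{top}(\g)=\delta(\Gamma)$ together with horseshoe-type approximation of entropy, hence circular here; nothing in the paper (Proposition \ref{prop:CE} included) supplies it. The push-forward through the intermediate cover is fine---countable-to-one factors preserve entropy, and the paper uses the same device in the proof of Proposition \ref{prop:sets}---but the approximation it relies on is exactly the missing content.
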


Another important player in the ergodic theory of the geodesic flow is the \emph{entropy at infinity}. Roughly speaking, the entropy at infinity measures the chaoticity of the system on arbitrarily small neighborhoods of infinity. For the sake of simplicity, we define it in similar terms to the topological entropy. For a geometric definition of the entropy at infinity (analogous to the critical exponent of $\Gamma$) we refer the reader to \cite{st,v2}. 

\begin{definition}\label{varprinf} The topological entropy at infinity $h_\infty(\g)$ of $(X,\g)$ is defined by
$$h_\infty(\g)=\sup_{(\mu_n)_n\to 0}\limsup_{n\to +\infty} h_{\mu_n}(g),$$
where the supremum runs over sequences $(\mu_n)_n$ converging vaguely to the zero measure.  
\end{definition}

As explained in Remark \ref{ex:seq}, if the non-wandering set of the geodesic flow is non-compact, then $h_\infty(\g)$ is well defined and lies in $[0,h_{top}(\g)]$. If $(M,g)$ is convex cocompact the entropy at infinity has no significance since $\M(\g)$ is compact and sequences of measures do not lose mass. 

We end this section recalling an important semicontinuity property of the entropy map. It is well-known in the compact case that the entropy map is upper-semicontinuous relative to the narrow topology. Since we are mostly interested in the non-compact case we need to take into consideration the escape of mass, as this is possible to occur. In this generality the next result was proved in \cite{v2}.

\begin{theorem}\label{A} Let $(M,g)$ be a pinched negatively curved manifold. Let $(\mu_n)_{n}$ be a sequence in $\M(\g)$ which converges to $\mu$ in the vague topology. Then 
$$\limsup_{n\to +\infty}h(\mu_n)\le \|\mu\|h(\overline{\mu})+(1-\|\mu\|)h_\infty(\g).$$
If $(\mu_n)_n$ converges vaguely to the zero measure, then the right hand side is understood as $h_\infty(\g)$. 
\end{theorem}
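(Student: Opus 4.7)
The plan is to decompose each $\mu_n$ into two invariant pieces, a ``retained'' part that converges narrowly to $\bar\mu$ and an ``escaping'' part that converges vaguely to the zero measure, and then exploit affinity of the entropy map on $\M(\g)$.

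First I would handle the extreme cases. If $\|\mu\|=1$, convergence is narrow by Proposition \ref{weakvag} and the bound reduces to the narrow upper semicontinuity of the entropy map in this non-compact setting. If $\|\mu\|=0$, the statement is essentially Definition \ref{varprinf}. So assume $\|\mu\|=:\alpha\in(0,1)$, and after passing to a subsequence I assume the $\limsup$ on the left is realised as a limit.

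For the main construction, choose an exhaustion $K_1\subset K_2\subset\ldots$ of $X$ by compact sets. For each level $\ell$ fix a threshold $c_\ell\in(0,1)$ and use the ergodic decomposition $\mu_n=\int_{\M^e(\g)}\nu\,d\rho_n(\nu)$ to split
$$
\mu_n^{+,\ell}:=\int_{\{\nu:\nu(K_\ell)>c_\ell\}}\nu\,d\rho_n(\nu),\qquad \mu_n^{-,\ell}:=\mu_n-\mu_n^{+,\ell}.
$$
Both summands are $\g$-invariant, and affinity of entropy under ergodic decomposition yields
$$
h(\mu_n)=\|\mu_n^{+,\ell}\|\,h(\overline{\mu_n^{+,\ell}})+\|\mu_n^{-,\ell}\|\,h(\overline{\mu_n^{-,\ell}}),
$$
with the convention that a vanishing weight kills its factor. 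Next I would tune the thresholds and run a diagonal argument so that $\overline{\mu_n^{+,\ell}}$ converges narrowly to $\bar\mu$ while $\overline{\mu_n^{-,\ell}}$ converges vaguely to zero. The key estimate is the Markov-type bound $\|\mu_n^{+,\ell}\|\le c_\ell^{-1}\mu_n(K_\ell)$ paired with $\mu_n^{+,\ell}(K_\ell)\ge c_\ell\|\mu_n^{+,\ell}\|$ and the vague convergence $\mu_n(K_\ell)\to\mu(K_\ell)\nearrow\alpha$; together these force $\|\mu_n^{+,\ell}\|\to\alpha$ and prevent the compact piece from losing mass. Because no mass is lost, Proposition \ref{weakvag} upgrades vague to narrow convergence of $\overline{\mu_n^{+,\ell}}$ to $\bar\mu$, and the narrow upper semicontinuity (the $\|\mu\|=1$ case) then gives $\limsup h(\overline{\mu_n^{+,\ell}})\le h(\bar\mu)$. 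On the escaping side, Definition \ref{varprinf} yields $\limsup h(\overline{\mu_n^{-,\ell}})\le h_\infty(\g)$. Assembling the two estimates and letting $\ell\to\infty$ produces the desired inequality.

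The main obstacle will be the narrow upper semicontinuity of the entropy map on the non-compact space $X$. The classical Misiurewicz partition argument is delicate here because finite partitions with small boundary need not control the entropy uniformly towards infinity; the cleanest route is a Katok/Brin-type estimate combined with the uniform hyperbolicity of the geodesic flow coming from the pinched curvature, so that measure-theoretic entropies can be dominated by growth rates of $(n,\varepsilon)$-separated sets on controlled compact neighborhoods. A secondary subtlety is the measurability of the set $\{\nu:\nu(K_\ell)>c_\ell\}\subset\M^e(\g)$ and the need to choose $c_\ell$ avoiding atoms of $\rho_n$ on the boundary $\{\nu(K_\ell)=c_\ell\}$, which a standard Fubini/countability argument resolves.
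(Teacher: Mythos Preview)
The paper does not prove this theorem; it is quoted from \cite{v2}. Nonetheless, your proposed argument contains a genuine gap that is worth identifying.

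The decomposition via ergodic decomposition collapses precisely in the case that matters. Suppose each $\mu_n$ is itself ergodic (this is the typical situation for the geodesic flow, for instance a sequence of periodic measures whose orbits spend a fraction roughly $\alpha$ of their time inside a large compact set and the rest wandering far away). Then the ergodic decomposition of $\mu_n$ is a Dirac mass at $\mu_n$, so your split yields either $\mu_n^{+,\ell}=\mu_n$ and $\mu_n^{-,\ell}=0$, or the reverse, depending on whether $\mu_n(K_\ell)$ exceeds the threshold $c_\ell$. In particular $\|\mu_n^{+,\ell}\|\in\{0,1\}$ for every $n$ and $\ell$, so it cannot converge to $\alpha\in(0,1)$, and $\overline{\mu_n^{+,\ell}}$ (when non-zero) equals $\mu_n$, which still loses mass and does \emph{not} converge narrowly to $\bar\mu$. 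No tuning of thresholds or diagonal argument can repair this: an ergodic measure is indivisible with respect to the decomposition you are attempting.

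This is exactly why the proof in \cite{v2} (and the analogous argument the present paper carries out for Proposition \ref{prop:premain}) does \emph{not} split the measure. Instead it splits \emph{time}: for a generic point $x$ of an ergodic $\mu_n$, the orbit segment $[0,T]$ is decomposed into excursions outside a compact set $K$ and complementary intervals, and the contribution of each long excursion is controlled by closing it up into a periodic orbit (using the local product structure and the closing lemma). The resulting periodic measures sit near infinity and their entropies are dominated by $h_\infty(\g)$, while the compact part is handled directly. This orbit-segment decomposition is what replaces your measure-level split, and it works uniformly whether or not $\mu_n$ is ergodic. Your identification of narrow upper semicontinuity as ``the main obstacle'' is also somewhat circular: the case $\|\mu\|=1$ of Theorem \ref{A} \emph{is} narrow upper semicontinuity, and in the non-compact setting it is not available as an independent input but is rather a by-product of the same excursion argument.
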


\subsection{Thermodynamic formalism} 
We start with the definition of the topological pressure of a potential $\phi:X\to \R$. This generalizes the definition of topological entropy of the geodesic flow. 

\begin{definition}[Topological pressure] Let $\phi:X\to \R$ be a continuous potential. The \emph{topological pressure} of $\phi$ is defined by 
$$P(\phi)=\sup_{\mu\in \M(\g)}\left\{h(\mu)+\int \phi d\mu:\int \phi d\mu>-\infty\right\}.$$
\end{definition}

In this paper we will be mostly interested in potentials which are bounded. This is a reasonable assumption as the topological entropy of the geodesic flow is finite. In this case, the assumption $\int \phi d\mu>-\infty$ can be omitted. 

A measure $m\in\M(\g)$ is said to be an \emph{equilibrium state} for $\phi:X\to\R$ if it realizes the supremum in the definition of topological pressure. In other words, if we have $P(\phi)=h(m)+\int \phi dm,$ and $\int \phi dm>-\infty$. 

The following class of functions play an important role in this work.

\begin{definition}\label{C_0} We say that a potential $\phi:X\to\R$ \emph{vanishes at infinity} if for every $\varepsilon>0,$ there exists a compact set $K_\varepsilon$ such that $\sup_{x\in X\setminus K_\varepsilon} |\phi(x)|<\varepsilon.$ The space of continuous functions vanishing at infinity is denoted by $C_0(X)$. 
\end{definition}

Note that $C_0(X)$ is the closure of $C_c(X)$ in $C_b(X)$ with respect to the uniform norm. The next simple lemma states that $C_0(X)$ is the space of test functions for the vague topology (see \cite[Lemma 2.20]{v2}).

\begin{lemma}\label{c0top} Let $\phi\in C_0(X)$. Then the map $\mu\mapsto \int \phi d\mu,$ is continuous in $\M_{\le 1}(\g)$ relative to the vague topology. 
\end{lemma}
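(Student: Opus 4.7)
The plan is to reduce the claim to the very definition of vague convergence by approximating $\phi$ with compactly supported continuous functions and exploiting the fact that measures in $\M_{\le1}(\g)$ have total mass at most $1$.

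Since $\M_{\le 1}(\g)$ endowed with the vague topology is metrizable (it is a compact metric space), it suffices to prove sequential continuity. So I would start by fixing a sequence $(\mu_n)_n$ in $\M_{\le 1}(\g)$ converging vaguely to some $\mu\in\M_{\le 1}(\g)$, fixing $\phi\in C_0(X)$, and aiming to show $\int\phi d\mu_n\to\int\phi d\mu$. By definition of $C_0(X)$ (it is the closure of $C_c(X)$ under the uniform norm), for every $\varepsilon>0$ there exists $\psi\in C_c(X)$ such that $\|\phi-\psi\|_\infty<\varepsilon$.

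The key estimate is the triangle inequality
\[
\left|\int\phi d\mu_n-\int\phi d\mu\right|\le \left|\int(\phi-\psi)d\mu_n\right|+\left|\int\psi d\mu_n-\int\psi d\mu\right|+\left|\int(\psi-\phi)d\mu\right|.
\]
The middle term tends to $0$ as $n\to+\infty$ by the very definition of vague convergence applied to $\psi\in C_c(X)$. Each of the remaining two terms is at most $\|\phi-\psi\|_\infty\cdot\|\mu_n\|\le \varepsilon$ and $\|\phi-\psi\|_\infty\cdot\|\mu\|\le \varepsilon$, respectively, where I am crucially using that $\mu_n,\mu\in\M_{\le 1}(\g)$ so their masses are bounded by $1$. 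Taking the limsup in $n$ yields $\limsup_{n\to+\infty}\left|\int\phi d\mu_n-\int\phi d\mu\right|\le 2\varepsilon$, and since $\varepsilon$ was arbitrary, the desired convergence follows.

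Honestly, there is no real obstacle: the boundedness of the total mass on $\M_{\le 1}(\g)$ is what makes the uniform approximation $\phi\approx\psi$ transfer to a uniform control of the integrals. The only subtlety worth flagging explicitly is that without this mass bound (e.g. on all of $\M$) the same argument would fail, which is exactly why $\M_{\le 1}(\g)$ is the natural setting for the vague topology used throughout the paper.
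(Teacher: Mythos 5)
Your argument is correct and complete: the paper itself does not prove this lemma but simply cites \cite[Lemma 2.20]{v2}, and your $\varepsilon$-approximation of $\phi$ by some $\psi\in C_c(X)$ together with the mass bound $\|\mu_n\|,\|\mu\|\le 1$ and the definition of vague convergence is exactly the standard argument behind that reference. The reduction to sequences is legitimate since, as the paper notes, $\M_{\le1}(\g)$ with the vague topology is a compact metric space, so nothing is missing.
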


A central problem in thermodynamic formalism is to understand under which assumptions it is possible to ensure the existence and uniqueness of equilibrium states. For convex cocompact manifolds this has been well understood for long time: H\"older-continuous potentials have a unique equilibrium state as consequence of the symbolic coding constructed by Bowen \cite{bo2} and Ratner \cite{ra}. In our setting the analogous result was proved in \cite{pps} (for $\phi=0$ this was proved in \cite{op}). 

\begin{theorem}\label{thm:ppsES}  Let $\phi$ be a H\"older-continuous potential. Then there exists at most one equilibrium measure for $\phi$.
\end{theorem}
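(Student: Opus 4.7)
The plan is to follow the Patterson--Sullivan--Gibbs machinery developed by Paulin, Pollicott and Schapira, which provides both existence (when applicable) and uniqueness of equilibrium measures for H\"older potentials via an explicit conformal construction. The H\"older hypothesis on $\phi$ is used to guarantee that certain Gibbs cocycles extend continuously to the boundary, and that the Patterson-type construction yields densities with good regularity.

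The first step is to lift $\phi$ to a $\Gamma$-invariant potential $\widetilde\phi$ on $\widetilde X$ and to define the associated Gibbs cocycle
$$C_\phi(\xi;x,y) = \lim_{t\to+\infty}\Bigl(\int_0^{t}\widetilde\phi(\widetilde v_s^{y,\xi})\,ds - \int_0^{t}\widetilde\phi(\widetilde v_s^{x,\xi})\,ds\Bigr),$$
where $\widetilde v_s^{z,\xi}$ is the unit tangent vector at time $s$ along the geodesic ray from $z$ toward $\xi\in\partial_\infty \widetilde M$. H\"older continuity of $\phi$ ensures this limit exists and defines a H\"older function on $\partial_\infty\widetilde M$. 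Then one runs the Patterson construction on the twisted Poincar\'e series with weight $\widetilde\phi - s$ to obtain, at $s = P(\phi)$, a $\Gamma$-equivariant conformal density $(\mu_x)_{x\in\widetilde M}$ on $\partial_\infty\widetilde M$ satisfying
$$\frac{d\mu_y}{d\mu_x}(\xi) = e^{-C_{\phi - P(\phi)}(\xi;x,y)}.$$
Using the Hopf parametrization, one then builds a $\g$-invariant measure $m_\phi$ on $X$ which satisfies a local Gibbs property: for any small flow box $B$ of length $T$ around an orbit through $v$,
$$m_\phi(B) \asymp \exp\Bigl(-TP(\phi) + \int_0^T \phi(g_s v)\,ds\Bigr),$$
with multiplicative constants independent of $T$ and $v$.

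For uniqueness, let $\nu$ be any equilibrium state. By the extreme-point decomposition we may assume $\nu$ is ergodic. A Brin--Katok-style argument, combined with the local Gibbs property above, shows that the variational equality $h(\nu) + \int \phi\,d\nu = P(\phi)$ forces $\nu$ to be absolutely continuous with respect to $m_\phi$: informally, any other asymptotic behavior of the Radon--Nikodym derivative would contradict the sharp equality in the variational principle through a partition argument and Shannon--McMillan--Breiman. Ergodicity of $m_\phi$, obtained via a Hopf-type argument from the ergodicity of the $\Gamma$-action on $(\partial^2_\infty \widetilde M, \mu_o \otimes \mu_o)$, then implies $\nu = m_\phi$. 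The main obstacle is the non-compactness: one must verify the divergence of the twisted Poincar\'e series at $s = P(\phi)$ to conclude ergodicity, and control the conformal densities near parabolic fixed points in order to propagate the Gibbs estimate uniformly along long orbit segments. These steps rely on delicate geometric estimates on cusp geometry and on maximal parabolic subgroups of $\Gamma$, which form the technical core of the PPS construction.
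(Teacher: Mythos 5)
The paper does not prove this statement itself: it is quoted directly from Paulin--Pollicott--Schapira \cite{pps} (with the case $\phi=0$ due to \cite{op}). Your outline --- Gibbs cocycle, Patterson construction of the conformal density, Gibbs property via the Hopf parametrization, and uniqueness by absolute continuity plus ergodicity --- is a faithful summary of exactly that argument, so it follows the same route the paper relies on.
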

 
In the non-compact setting it is possible that some potentials do not have an equilibrium state (for instance see \cite{v1}). This is true even for the potential constant equal to zero (see \cite{peigne}). A sharp criterion for the existence of equilibrium states for H\"older-continuous potential is presented in \cite{ps}. For potentials in $C_0(X)$ we have the following, easier to state, criterion for the existence of an equilibrium state. We emphasize that potentials in $C_0(X)$ are uniformly continuous, but not necessarily H\"older-continuous. 

\begin{lemma}\label{lem:exx} Let $\phi\in C_0(X)$ be a potential such that $P(\phi)>h_\infty(\g)$. Then there exists an equilibrium measure for $\phi$.
\end{lemma}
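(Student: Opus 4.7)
The plan is to extract an equilibrium state as the vague limit of a pressure-maximizing sequence, using the pressure gap hypothesis $P(\phi) > h_\infty(\g)$ to rule out escape of mass. First I would pick a sequence $(\mu_n)_n \subset \M(\g)$ with $h(\mu_n) + \int \phi \, d\mu_n \to P(\phi)$, and pass to a vaguely convergent subsequence $\mu_n \to \mu \in \M_{\le 1}(\g)$ using compactness of $\M_{\le 1}(\g)$ in the vague topology.

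Since $\phi \in C_0(X)$, Lemma \ref{c0top} yields $\int \phi\, d\mu_n \to \int \phi\, d\mu$, while Theorem \ref{A} controls $\limsup_n h(\mu_n)$ in terms of $h(\overline{\mu})$ and $h_\infty(\g)$. Writing $m=\|\mu\|$ and adding the two limits produces an inequality of the form
\[
P(\phi) \le m\, h(\overline{\mu}) + (1-m)\, h_\infty(\g) + \int \phi\, d\mu.
\]
The crucial step is to convert this into a contradiction whenever $m<1$. The idea is to combine the identity $\int \phi\, d\mu = m \int \phi\, d\overline{\mu}$ (which is just the definition of $\overline{\mu}$) with the trivial pressure bound $h(\overline{\mu}) + \int \phi\, d\overline{\mu} \le P(\phi)$; after regrouping, the $\int \phi\, d\overline{\mu}$ terms cancel and the inequality collapses to
\[
(1-m)\bigl(P(\phi) - h_\infty(\g)\bigr) \le 0.
\]
The hypothesis $P(\phi) > h_\infty(\g)$ then forces $m=1$. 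The degenerate case $m=0$ (where $\overline{\mu}$ is not defined) is handled directly: the displayed inequality reduces to $P(\phi) \le h_\infty(\g)$, again a contradiction.

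Once $m=1$ is established, Proposition \ref{weakvag} upgrades the vague convergence $\mu_n \to \mu$ to narrow convergence, and Theorem \ref{A} specializes to ordinary upper semicontinuity $\limsup_n h(\mu_n) \le h(\mu)$. Combined with the convergence of the integrals, this gives $h(\mu) + \int \phi\, d\mu \ge P(\phi)$, and the reverse inequality is automatic from the definition of topological pressure. Hence $\mu$ is an equilibrium state for $\phi$. I expect the cancellation step in the regrouping — turning the semicontinuity defect into the clean dichotomy ``$m=1$ or $P(\phi)\le h_\infty(\g)$'' — to be the only non-routine point; the rest follows mechanically from the tools already assembled in the preliminaries.
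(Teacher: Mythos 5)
Your argument is correct and is exactly the mechanism the paper relies on: extract a vague limit of a pressure-maximizing sequence, use Lemma \ref{c0top} for the integrals and Theorem \ref{A} for the entropy, and let the gap $P(\phi)>h_\infty(\g)$ force full mass, just as in the paper's proofs of Lemma \ref{lem:t0} and Lemma \ref{lem:div}. No gaps; the cancellation step via $\int\phi\,d\mu=\|\mu\|\int\phi\,d\overline{\mu}$ and the variational bound for $\overline{\mu}$ is the standard and intended route.
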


We say that $\phi\in C_0(X)$ is \emph{strongly positive recurrent} (or SPR) if $P(\phi)>h_\infty(\g)$. A more general definition of strongly positive recurrent potentials is given in \cite[Definition 2.15]{v2}. In the context of countable Markov shifts SPR potentials were introduced and studied in \cite{sa}. Because of the similarities between the thermodynamic formalism of countable Markov shifts and the geodesic flow, one expects that properties of SPR potentials known for countable Markov shifts also hold in our set up.  

We say that $(M,g)$ is SPR if the potential constant equal to zero is SPR, or equivalently, if $h_{top}(\g)>h_\infty(\g)$. SPR manifolds have been studied in \cite{st}, where the authors proved a formula for the rate of change of the topological entropy under suitable perturbations of the metric.

\section{Ergodic optimization and maximizing measures}\label{sec3}
In this section we investigate under which conditions we can guarantee  the existence of a maximizing measure for uniformly continuous potentials. Our main technical ingredient is Theorem \ref{main:a}, which we prove in this section. In the non-compact setting it is fairly easy to construct potentials which do not admit a maximizing measure (see Example \ref{ex:nomea}), and a suitable assumption is required on the potential to rule out such cases. In contrast, for a compact metric space, the existence of maximizing measures for a continuous potential is a well known and simple fact. For completeness we provide a proof of this fact in the case where $(M,g)$ is convex cocompact. 

\begin{proposition}\label{felipito} Let $(M,g)$ be convex cocompact. Then every potential $\phi\in C_b(X)$ admits a maximizing measure.
\end{proposition}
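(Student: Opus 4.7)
The plan is to reduce to the compact case by exploiting the fact that convex cocompactness makes the dynamically relevant portion of the flow compact. First I would recall that any $\g$-invariant probability measure on $X$ is supported on the non-wandering set $\Omega\subseteq X$ of the geodesic flow, and that by hypothesis $\Omega$ is compact. In particular, if we set $\phi_0:=\phi|_\Omega$, then $\phi_0\in C(\Omega)$ is bounded, continuous on a compact space, and
\[
\beta(\phi)=\sup_{\mu\in\M(\g)}\int_{\Omega}\phi_0\,d\mu,
\]
so nothing is lost by regarding $\phi$ as a bounded continuous function on the compact metric space $\Omega$.

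Next I would invoke compactness of $\M(\g)$ in the narrow topology. By Remark \ref{ex:seq}, $(M,g)$ being convex cocompact is equivalent to $\M(\g)$ being compact in the narrow topology; alternatively one can argue directly that, since $\Omega$ is compact, the space of $\g|_\Omega$-invariant Borel probability measures on $\Omega$ is weak$^*$ compact, and Proposition \ref{weakvag} then rules out any escape of mass. Choose a maximizing sequence $(\mu_n)_n\subset\M(\g)$ with $\int\phi\,d\mu_n\to\beta(\phi)$. By compactness there is a subsequence $(\mu_{n_k})_k$ converging in the narrow topology to some $\mu\in\M(\g)$.

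Finally, since $\phi\in C_b(X)$ restricts to a bounded continuous function on $\Omega$ and all the measures $\mu_{n_k}$ as well as the limit $\mu$ are supported on $\Omega$, narrow convergence gives
\[
\int\phi\,d\mu=\lim_{k\to+\infty}\int\phi\,d\mu_{n_k}=\beta(\phi),
\]
so $\mu$ is a maximizing measure for $\phi$. There is essentially no obstacle here: the whole argument is the standard weak$^*$-compactness proof from the compact setting, and the only mild subtlety is to observe that convex cocompactness lets us restrict attention to the compact set $\Omega$ and therefore precludes the escape-of-mass phenomenon that Theorem \ref{main:a} is designed to handle in the general non-compact case.
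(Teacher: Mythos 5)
Your argument is correct and follows essentially the same route as the paper: convex cocompactness makes the non-wandering set compact, hence $\M(\g)$ is compact in the narrow topology, and the narrow continuity of $\mu\mapsto\int\phi\,d\mu$ for $\phi\in C_b(X)$ yields a maximizer. The paper phrases this as a continuous function on a compact space attaining its maximum rather than via a maximizing sequence, but the content is identical.
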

\begin{proof} Since $(M,g)$ is convex cocompact, the non-wandering set of the geodesic flow is compact. This implies that $\M(\g)$, endowed with the narrow topology, is a compact metric space. The function $A:\M(\g)\to \R$, given by $A(\mu)=\int \phi d\mu$ is continuous, and therefore it  has a maximum. 
\end{proof}

For the rest of this section we will always assume that the non-wandering set of the geodesic flow is non-compact. Recall that for a continuous potential $\phi$ we have $\beta(\phi)=\sup_{\mu\in \M(\g)}\int \phi d\mu,$ and that, in analogy to the entropy at infinity of the geodesic flow (see Definition \ref{varprinf}), we defined $\beta_\infty(\phi)$ (see Definition \ref{def:Minf}). The basic relation between $\beta(\phi)$ and $\beta_\infty(\phi)$ is given by the following result. 

\begin{lemma}\label{lem:a} Let $\phi\in C(X)$ and assume $\beta(\phi)<+\infty$. If $\psi\in C_0(X)$ is a positive function, then
$$\beta_\infty(\phi)=\lim_{t\to+\infty}\beta(\phi-t\psi).$$
\end{lemma}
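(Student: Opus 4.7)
My plan is to establish the equality by proving two inequalities separately, having first observed that the limit on the right hand side exists. Since $\psi>0$, the map $t\mapsto \beta(\phi-t\psi)$ is non-increasing in $t$, and thus $L:=\lim_{t\to+\infty}\beta(\phi-t\psi)\in[-\infty,\beta(\phi)]$ is well defined.

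The direction $L\ge \beta_\infty(\phi)$ is the easier one. Fix $t>0$ and let $(\mu_n)_n\subset \M(\g)$ converge vaguely to the zero measure. Since $\psi\in C_0(X)$, Lemma~\ref{c0top} yields $\int \psi\, d\mu_n\to 0$. For each $n$ we have $\beta(\phi-t\psi)\ge \int \phi\, d\mu_n - t\int \psi\, d\mu_n$, so taking $\limsup_n$ absorbs the $\psi$-term and leaves $\beta(\phi-t\psi)\ge \limsup_n \int \phi\, d\mu_n$. Taking the supremum over such sequences gives $\beta(\phi-t\psi)\ge \beta_\infty(\phi)$ for every $t>0$, and hence $L\ge \beta_\infty(\phi)$.

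The reverse inequality $L\le \beta_\infty(\phi)$ is the interesting one. The case $L=-\infty$ is trivial, so assume $L>-\infty$. I would select $t_k\to+\infty$ and near-optimal measures $\mu_k\in\M(\g)$ with $\int(\phi-t_k\psi)\, d\mu_k\ge \beta(\phi-t_k\psi)-1/k$, and then show that $\mu_k\to 0$ vaguely; this escape-of-mass step is what I expect to be the main obstacle. Rearranging the near-optimality bound and using $\int \phi\, d\mu_k\le \beta(\phi)$ together with $\beta(\phi-t_k\psi)\ge L$ (by monotonicity) gives
$$t_k \int \psi\, d\mu_k \le \beta(\phi) - L + 1/k,$$
whose boundedness combined with $t_k\to+\infty$ forces $\int \psi\, d\mu_k\to 0$. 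By compactness of $\M_{\le 1}(\g)$ in the vague topology, any subsequence of $(\mu_k)_k$ has a vague accumulation point $\nu$; Lemma~\ref{c0top} gives $\int \psi\, d\nu=0$, and the strict positivity of $\psi$ forces $\nu$ to be the zero measure. Hence $\mu_k\to 0$ vaguely. It then suffices to write $\int \phi\, d\mu_k=\int(\phi-t_k\psi)\, d\mu_k+t_k\int \psi\, d\mu_k\ge \beta(\phi-t_k\psi)-1/k$, so that $\limsup_k \int \phi\, d\mu_k\ge L$, and by definition of $\beta_\infty(\phi)$ we conclude $\beta_\infty(\phi)\ge L$. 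The entire argument is really a tension between two properties of $\psi$: strict positivity rules out any nonzero vague limit for $(\mu_k)_k$, while vanishing at infinity makes $\int \psi\, d\mu_k\to 0$ compatible with $t_k\to+\infty$.
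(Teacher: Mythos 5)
Your proof is correct and follows essentially the same route as the paper: the easy inequality comes from Lemma \ref{c0top} applied along a sequence realizing $\beta_\infty(\phi)$, and the hard inequality from choosing near-maximizing measures for $\beta(\phi-t_k\psi)$ and showing they must escape all mass. The only difference is organizational: the paper splits cases according to whether $\beta_\infty(\phi)=-\infty$ and gets the escape of mass by contradiction, while you split on $L$ and extract it from the explicit bound $t_k\int\psi\,d\mu_k\le\beta(\phi)-L+1/k$ together with strict positivity of $\psi$ and vague compactness of $\M_{\le 1}(\g)$ --- a slightly cleaner bookkeeping of the same idea.
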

\begin{proof}  Observe that $h(t):=\beta(\phi-t\psi)$ is a decreasing function, in particular $\lim_{t\to+\infty} \beta(\phi-t\psi)$ exists. Note that the limit could be equal to $-\infty$. 

We will first prove that $\beta_\infty(\phi)\le \beta(\phi-t\psi)$, for every $t\in \R$. Let $(\mu_n)_n$ be a sequence in  $\M(\g)$ which converges vaguely to zero and such that $\lim_{n\to+\infty}\int \phi d\mu_n=\beta_\infty(\phi)$. By definition of $\beta(\cdot)$ we know that 
$$\limsup_{n\to+\infty} \int (\phi-t\psi)d\mu_n\le \beta(\phi-t\psi).$$
Since the sequence $(\mu_n)_n$ converges vaguely to zero and $\psi\in C_0(X)$, we have $\lim_{n\to+\infty}\int \psi d\mu_n=0$. In particular, we get 
$$\beta_\infty(\phi)=\lim_{n\to+\infty}\int \phi d\mu_n=\lim_{n\to+\infty} \int (\phi-t\psi)d\mu_n\le \beta(\phi-t\psi).$$
This readily implies that $\beta_\infty(\phi)\le \lim_{t\to+\infty}\beta(\phi-t\psi)$. 

We will now prove that $\lim_{t\to+\infty}\beta(\phi-t\psi)\le \beta_\infty(\phi)$. Choose $\nu_n\in \M(\g)$ such that $\int (\phi-n\psi) d\nu_n\ge \beta(\phi-n\psi)-\frac{1}{n}.$ In particular 
\begin{align}\label{a1} \liminf_{n\to+\infty}\int (\phi-n\psi)d\nu_n \ge \lim_{n\to+\infty}\beta(\phi-n\psi)\ge \beta_\infty(\phi),\end{align}
where the last inequality was obtained in the paragraph above. We now separate the analysis into two cases:\\

Case 1 (when $\beta_\infty(\phi)=-\infty$): If $(\nu_n)_n$ does not converge vaguely to zero, then $\limsup_{n\to+\infty}n\int\psi d\nu_n=+\infty$, and the left hand side in \eqref{a1} converges to $-\infty$. If $(\nu_n)_n$ converges vaguely to zero, then by the assumption $\beta_\infty(\phi)=-\infty$, we have $\lim_{n\to+\infty}\int \phi d\nu_n=-\infty,$ and we conclude that the left hand side of \eqref{a1} converges to $-\infty$. In both situations we conclude that $\lim_{n\to+\infty}\beta(\phi-n\psi)=-\infty$\\

Case 2 (when $\beta_\infty(\phi)$ is finite): Observe that $(\nu_n)_n$ converges vaguely to zero, otherwise $\limsup_{n\to+\infty}n\int \psi d\nu_n=+\infty,$ which would imply that the left hand side in (\ref{a1}) goes to $-\infty$, and contradicts that $\beta_\infty(\phi)$ is finite. We conclude that 
$$\liminf_{n\to+\infty}\int \phi d\nu_n\ge  \liminf_{n\to+\infty}\int (\phi-n\psi)d\nu_n\ge \lim_{n\to+\infty}\beta(\phi-n\psi)\ge \beta_\infty(\phi).$$
Since the sequence $(\nu_n)_n$ goes vaguely to zero, the left hand side is at most $\beta_\infty(\phi)$. In particular we get that $\beta_\infty(\phi)\ge \lim_{n\to+\infty}\beta(\phi-n\psi)\ge \beta_\infty(\phi)$. 
\end{proof}

The key ingredient in the proof of Theorem \ref{main:a} is our next proposition, which together with Lemma \ref{lem:a} will imply the result. 

\begin{proposition}\label{prop:premain} Let $\phi$ be a bounded from above and uniformly continuous potential. Let $(\mu_n)_n$ be  a sequence of ergodic measures in $\M(\g)$ which converges vaguely to $\mu\in \M_{\le1}(\g)$. Then 
\begin{align}\label{ar} \limsup_{n\to+\infty}\int \phi d\mu_n\le \int\phi d\mu+(1-\|\mu\|)\beta(\phi).\end{align}
\end{proposition}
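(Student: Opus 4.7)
My plan is to first dispose of trivial boundary cases, then to use the ergodicity of each $\mu_n$ together with the Anosov closing lemma to decompose the orbit into a compact part controlled by vague convergence and an ``escaping'' part whose contribution is bounded by $\beta(\phi)$.

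I would begin with two degenerate cases. If $\|\mu\|=1$, then $\mu_n\to\mu$ narrowly by Proposition \ref{weakvag}; setting $M:=\sup\phi<\infty$, Portmanteau applied to the nonnegative lower semicontinuous function $M-\phi$ gives $\limsup_n\int\phi\,d\mu_n\le\int\phi\,d\mu$, which is the claim since $(1-\|\mu\|)\beta(\phi)=0$. If $\int\phi\,d\mu=-\infty$, the analogous Portmanteau inequality for vague convergence of positive Radon measures against nonnegative lower semicontinuous functions (obtained by approximating $M-\phi$ from below by elements of $C_c(X)$) forces $\liminf_n\int(M-\phi)\,d\mu_n=+\infty$, so the left-hand side equals $-\infty$ and the inequality is vacuous. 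Hence I may assume $0\le\|\mu\|<1$ and $\int\phi\,d\mu\in\R$.

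Next, fix $\eps>0$ and choose a compact set $K\subset X$ with $\mu(\partial K)=0$ and $\mu(K)\ge\|\mu\|-\eps$; such a $K$ exists because $\mu$ is a finite Radon measure. Decompose $\int\phi\,d\mu_n=\int_K\phi\,d\mu_n+\int_{K^c}\phi\,d\mu_n$. Since $\phi\,1_K$ is bounded continuous on $K$ and $\mu(\partial K)=0$, Portmanteau for vague convergence gives $\int_K\phi\,d\mu_n\to\int_K\phi\,d\mu$, and this is within $O(\eps)$ of $\int\phi\,d\mu$ by monotone convergence applied to $\phi^+$ and $\phi^-$ separately. The task reduces to showing
\[
\limsup_n \int_{K^c}\phi\,d\mu_n \;\le\; \beta(\phi)\bigl(1-\mu(K)\bigr)+o(1).
\]
For this I would exploit ergodicity: pick for each $n$ a Birkhoff-generic $x_n\in X$ and a time $T_n\to\infty$ so that $\frac{1}{T_n}\int_0^{T_n}\phi\,1_{K^c}(g_tx_n)\,dt$ and the time-fraction in $K^c$ approximate $\int_{K^c}\phi\,d\mu_n$ and $\mu_n(K^c)$, respectively, to within $1/n$. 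The set $\{t\in[0,T_n]:g_tx_n\in K^c\}$ decomposes into maximal intervals $(a_i,b_i)$ whose endpoints lie on $\partial K$. Partition $\partial K$ into finitely many balls of diameter at most some small $\delta>0$, and for each interval whose entry and exit points lie in a common ball apply the Anosov closing lemma to produce a periodic orbit $\sigma_{n,i}$ of period close to $b_i-a_i$ shadowing the piece. By uniform continuity of $\phi$ there is a modulus $\omega(\delta)\to 0$ such that $\bigl|\int\phi\,d\mu_{\sigma_{n,i}}-\tfrac{1}{b_i-a_i}\int_{a_i}^{b_i}\phi(g_tx_n)\,dt\bigr|\le\omega(\delta)$, and $\g$-invariance of $\mu_{\sigma_{n,i}}$ yields $\int\phi\,d\mu_{\sigma_{n,i}}\le\beta(\phi)$. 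Summing over the closable intervals, controlling the ``unpaired'' ones via a pigeonhole count on the finitely many cells, and then letting first $n\to\infty$ and $\delta\to 0$, and finally $\eps\to 0$, gives the displayed inequality and completes the proof.

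The main obstacle is the closing step: the pieces in $K^c$ start and end on $\partial K$ but not at close points, so direct shadowing is not available, and the pigeonhole remedy requires showing that the total length of unpaired pieces is $o(T_n)$. In pinched negative curvature this follows from a counting argument on the finite partition of $\partial K$, but it must be executed with care, since the pieces may penetrate deeply into the cusps, where the hyperbolic shadowing estimates underlying the closing lemma remain valid but depend on the pinched curvature bounds for uniform geometric control.
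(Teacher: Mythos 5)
Your overall strategy is the same as the paper's (Birkhoff decomposition of a generic orbit of each ergodic $\mu_n$ into excursions outside a compact set, followed by a closing argument that compares each excursion's time average with $\beta(\phi)$ via a periodic measure), and your treatment of the degenerate cases $\|\mu\|=1$ and $\int\phi\,d\mu=-\infty$ is fine. However, the step you yourself flag as the main obstacle is a genuine gap, and the pigeonhole remedy you propose does not close it. The entry point $g_{a_i}x$ and exit point $g_{b_i}x$ of an excursion have no reason to lie in a common cell of $\partial K$; geometrically one expects the opposite (e.g.\ in a cusp excursion the vector points outward on entry and inward on exit, so the two points lie in disjoint regions of $T^1M$ over the base). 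In the worst case \emph{every} excursion is ``unpaired,'' and chaining excursions whose exit and entry cells match amounts to decomposing a directed multigraph on the cells into closed walks, which fails whenever in- and out-degrees disagree; the defect $\tfrac12\sum_v|d^+(v)-d^-(v)|$ can be comparable to the total number of excursions, so the unpaired mass is not $o(T_n)$. The missing idea is to use \emph{transitivity} of the flow on the non-wandering set: for the fixed compact $K$ and precision $\delta$ there is $L=L(\delta,K)$ such that any exit point can be joined back to (a $\delta/4$-neighbourhood of) the entry point by an orbit segment of length $s\le L$; one then applies the closing lemma to the concatenation of the excursion with this connector. This is exactly what the paper does, and it removes any need for the entry and exit points to be close.

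Even with that fix your decomposition $X=K\cup K^c$ is too coarse: each closed-up excursion costs an additive error of order $L(\delta,K)\cdot\sup_{A}|\phi|$ coming from the connecting segment, and excursions into $K^c$ can be arbitrarily short and arbitrarily numerous, so the accumulated error need not be $o(T_n)$. The paper avoids this by closing only the excursions into the sets $B_m$ (points whose orbit stays at distance at least $m$ in time from $K$), which forces each closed excursion to have length $\ge m$ and makes the total relative error $O(C/m)$, with the region $A_m\setminus K$ absorbed into the ``compact part'' handled by vague convergence (using $\mu(\partial A_m)=0$ for all but countably many $m$). You would need to incorporate both of these devices --- the transitivity connector and the length-$\ge m$ filtering of excursions --- for your argument to go through; as written, the key inequality $\limsup_n\int_{K^c}\phi\,d\mu_n\le\beta(\phi)(1-\mu(K))+o(1)$ is not established.
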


Before starting the proof of this proposition, we recall some useful dynamical properties of the geodesic flow. First, recall that the geodesic flow restricted to the non-wandering set is transitive (see for instance \cite[Proposition 4.7.3]{eb}). In particular, given a compact set $K\subseteq X$ and $\delta>0$, there exists $L=L(\delta,K)>0$ such that for every pair of points $x,y\in K$ there exists $s=s(x,y)\in [0,L]$ and $w=w(x,y)\in K$ such that $d(x,w)< \delta/4$ and $d(y,g_s(w))<\delta/4$. Second, by the local product structure and the closing lemma, there exists $N=N(\delta,K)$ such that the following holds: if $x\in K$ and $g_n(x)\in K$, where $n\geq N$, then there exists $z=z(x,n,\delta)$ such that 
$$d(g_t(x),g_t(z))<\delta/2, \ \ \text{for every }t\in[0,n],$$
$$d(g_t(w),g_{n+t}(z))<\delta/2, \ \ \text{for every }t\in[0,s], \text{ and}$$
$$g_m(z)=z,$$
where $|m-(n+s)|<\delta/2$, $w=w(g_n(x),x)$ and $s=s(g_n(x),x)$. In simpler terms we have two geodesic segments, the geodesic segment obtained  by flowing time $n$ from $x$ and the geodesic segment obtained by flowing time $s$ from $w$. We use the local product structure to shadow the two segments, and then use the closing lemma to get a periodic orbit based at $z$, as described above.

\begin{proof}[Proof of Proposition \ref{prop:premain}]  We will first deal with the case where $\int \phi d\mu>-\infty$. Note that since $\phi$ is bounded from above, we have $\beta(\phi)<+\infty$. It is enough to prove the inequality when  $|\mu|>0$, otherwise the result follows from the fact $\beta_\infty(\phi)\le \beta(\phi)$. We will moreover assume that $\beta(\phi)\ge 0$, otherwise we can replace $\phi$ by $\phi+C$, for a sufficiently large constant $C$. 

Fix a large compact set $K$ of $X$ contained in the non-wandering set of the geodesic flow such that $\mu(\partial K)=0$, and $\mu(K)>0$.

Let $Y$ be the set of points in $X$ that visit $K$ under non-positive and non-negative times. For $x\in Y$ we define $n_1(x)$ and $n_2(x)$ as the smallest non-negative real numbers such that $g_{-n_1(x)}x\in K$,  and $g_{n_2(x)}(x)\in K$. For $x\in Y$ we define $n(x):=n_1(x)+n_2(x)$. For $r\in [0,+\infty)$ define the sets 
\begin{align*}N_r=\{x\in Y: n(x)=r\}, \text{ }A_r=\bigcup_{t\in [0,r)}N_t \text{ and }B_r=\bigcup_{t\in [r,+\infty)}N_t.\end{align*}
Let $\cQ$ be the subset of $[0,+\infty)$ such that $\mu(N_m)=0$ for every $m\in \cQ$. Observe that if $x\in \partial A_m$, then $x\in g_{n_1(x)}\partial K \cup g_{-n_2(x)}\partial K\cup N_m$. In particular $\mu(\partial A_m)=0$ for every $m\in \cQ$. The set $\cQ$ is equal to $[0,+\infty)$ after removing a countable set. Observe also that the set $A_r$ is bounded for every $r\in [0,+\infty)$, so in particular 
$$I_r:=\inf_{x\in A_r}\phi(x) \ \ \text{   and   } \ \ M_r:=\sup_{x\in A_r}\phi(x),$$ are finite. 

For $x\in K$, we say that $[p,q)$ is an \emph{excursion of $x$ into} $B_r$ if $g_t(x)\in B_r$ for every $t\in (p,q)$, $g_p(x)\in K$ and $g_q(x)\in K$.   

Fix $\e>0$. Since $\phi$ is uniformly continuous, there exists $\delta_0>0$ such that if $d(x,y)<\delta_0$, then $|\phi(x)-\phi(y)|<\e$. We set $L_0=L(\delta_0,K)$ and $N_0=N(\delta_0,K)$ as in the paragraph below the statement of Proposition \ref{prop:premain}.\\ 

Let $\eta\in \M(\g)$ be an ergodic measure such that  $\phi$ is $\eta$-integrable and $\eta(K)>0$. Fix $x\in K$ a generic point of $\eta$. By the Birkhoff ergodic theorem we have
$$\lim_{t\to+\infty}\frac{1}{t}\int_0^t \phi(g_s(x))ds=\int \phi d\eta,\text{ and }  \lim_{t\to+\infty}\frac{1}{t}\int_0^t \phi(g_s(x)) 1_{A_m}(g_s(x))ds=\int_{A_{m}}\phi d\eta,$$
where $1_S$ is the characteristic function of $S\subseteq X$ and $m\ge N_0$. We decompose the interval $[0, +\infty)$ in terms of excursions of $x$ into $B_{m}$. More precisely, 
$$[0,+\infty)=[a_1,b_1)\cup [b_1,c_1)\cup[a_2,b_2)\cup [b_2,c_2)\cup\ldots, $$
 where $[b_i,c_i)$ is an excursion of $x$ into $B_{m}$ and $[a_i,b_i)$ are the complementary intervals. 
Note that $g_{b_i}(x)\in K$ and $g_{c_i}(x)\in K$, and that $c_i-b_i\ge m$. In particular we can consider the periodic point $z_i:=z(g_{b_i}(x),c_i-b_i, \delta_0)$, where $g_{m_i}(z_i)=z_i$, and $|m_i-(c_i-b_i +s_i)|<\delta_0/2$. Here $s_i=s(g_{b_i}(x),g_{c_i}(x))$. Moreover, since $\phi$ is uniformly continuous, we have that
\begin{align}\label{b1}
\int_{b_i}^{c_i}\phi(g_{t}(x))dt< \int_{0}^{c_i-b_i}\phi(g_t(z_i))dt+\e (c_i-b_i). 
\end{align}
Since 
\begin{align}\label{b2}
m_i-(c_i-b_i)< s_i+\frac{\delta_0}{2}\le L_0+\frac{\delta_0}{2},
\end{align}
we get that $g_t(z_i)\in A_{L_0+\delta_0}$, for every $t\in [c_i-b_i,m_i]$. We conclude that
\begin{align}\label{b3}
R:=\min\{0,(L_0+\delta_0/2)I_{L_0+\delta_0}\}\le \int_{c_i-b_i}^{m_i}\phi(g_t(z_i))dt. 
\end{align}
Combining (\ref{b1}), (\ref{b2}) and (\ref{b3}) we get that 
\begin{align*}
\int_{b_i}^{c_i}\phi(g_t(x))dt&<\int_0^{m_i}\phi(g_t(z_i))dt+\e(c_i-b_i)-R\\
&\le m_i \beta(\phi)+\e(c_i-b_i)-R\\
&\le (c_i-b_i)(\beta(\phi)+\e)+(L_0+\delta_0/2)\beta(\phi)-R.
\end{align*}
Therefore
\begin{align*}\frac{1}{c_n}\int_{0}^{c_n}\phi(g_t(x))dt&\le\frac{p_n}{c_n} (\beta(\phi)+\e)+\frac{n}{c_n}C+\frac{1}{c_n}\sum_{i=1}^n\int_{a_i}^{b_i}\phi(g_t(x))dt\\
&=\frac{p_n}{c_n} (\beta(\phi)+\e)+\frac{n}{c_n}C+\frac{1}{c_n}\int_0^{c_n}\phi(g_t(x))1_{A_{m}}(g_t(x))dt
\end{align*}
where $p_n:=\sum_{i=1}^n(c_i-b_i)$, $C:=(L_0+\frac{\delta_0}{2})\beta(\phi)-R$, and we used that the intervals $(a_i,b_i)$ are precisely the times $t$ where $g_t(x)$ belongs to $A_{m}$. Notice that $$\lim_{n\to+\infty}\frac{p_n}{c_n}=\lim_{n\to+\infty}\frac{1}{c_n}\int_0^{c_n}1_{B_{m}}(g_s(x))ds=\eta(B_{m}).$$ 
Since $c_n\ge \sum_{i=1}^{n}(c_i-b_i)\ge n m$, then $ \frac{1}{m}\ge \frac{n}{c_n}$. Sending $n$ to infinity we get that 
\begin{align}\label{ax}
\int \phi d\eta\le \eta(B_{m})(\beta(\phi)+\e)+\frac{1}{m}C+\int_{A_{m}}\phi d\eta.
\end{align}

We now return to the sequence $(\mu_n)_n$. Since $(\mu_n)_n$ converges vaguely to $\mu$ we know that $\mu_n(K)>0$, for large enough $n$. Without loss of generality we can assume that $\mu_n(K)>0$, and that $\phi$ is $\mu_n$-integrable for all $n\in \N$ (otherwise there is nothing to prove). 
It follows from (\ref{ax}), that 
\begin{align}\label{b4}
\int \phi d\mu_n\le \mu_n(B_{m})(\beta(\phi)+\e)+\frac{1}{m}C+\int_{A_{m}}\phi d\mu_n,
\end{align}
for every $n\in\N$.  Since $A_m$ is relatively compact and $\mu(\partial A_m)=0,$ for every $m\in \cQ,$ we conclude that 
$$\lim_{n\to+\infty}\int_{A_m}\phi d\mu_n=\int_{A_m}\phi d\mu, \text{ and }\lim_{n\to+\infty}\mu_n(B_m)=1-\mu(A_m).$$ Combining this with (\ref{b4}) we get
\begin{align*} \limsup_{n\to+\infty}\int \phi d\mu_n\le (1-\mu(A_{m}))(\beta(\phi)+\e)+\frac{1}{m}C+\int_{A_{m}}\phi d\mu,\end{align*}
for every $m\in\cQ\cap [n_0,+\infty)$. Finally take $m$ to infinity and note that $C$ is a constant independent of $m$ (it only depends on $\phi$, $\e$ and $K$). Since $\e>0$ was arbitrary this concludes the proof of \eqref{ar}.

It remains to consider the case where $\int \phi d\mu=-\infty$. For $h<0$ we define the potential $\phi_h:X\to\R,$ by $\phi_h(x)=\max\{\phi(x),h\}$. Since $\int \phi_h d\mu>-\infty$ we can use what we proved in the case above for the potential $\phi_h$, more precisely, that
$$\limsup_{n\to+\infty}\int\phi_h d\mu_n\le \int\phi_h d\mu+(1-\|\mu\|)\beta(\phi_h).$$
Combining the inequality above with 
$$\limsup_{n\to+\infty}\int \phi d\mu_n\le \limsup_{n\to+\infty}\int\phi_h d\mu_n, \text{ and }\lim_{h\to -\infty}\int \phi_h d\mu=-\infty,$$
we conclude that $\limsup_{n\to+\infty}\int \phi d\mu_n=-\infty$, as desired.  
\end{proof} 

\begin{remark}\label{nonergodiccase} For bounded potentials it follows from the narrow-density of ergodic measures that Proposition \ref{prop:premain} holds for sequences of non-ergodic measures. For general bounded from above potentials the same can be obtained adapting the proof of \cite[Proposition 6.14]{iovel} to our setting. 
\end{remark}

\begin{proof}[Proof of Theorem \ref{main:a}] Let $\psi\in C_0(X)$ be a positive function. By Proposition \ref{prop:premain} we know that $$\limsup_{n\to+\infty}\int (\phi-t\psi)d\mu_n\le \int (\phi-t\psi)d\mu+(1-\|\mu\|)\beta(\phi-t\psi),$$
for every $t\in \R$.  Since $\psi\in C_0(X)$ we know that $\lim_{n\to+\infty}\int \psi d\mu_n=\int \psi d\mu$. We conclude that 
$$\limsup_{n\to+\infty}\int \phi d\mu_n\le \int \phi d\mu+(1-\|\mu\|)\beta(\phi-t\psi),$$
for every $t\in \R$. Send $t$ to infinity and use Lemma \ref{lem:a} to conclude the proof of the theorem.
\end{proof}

\begin{remark}
In virtue of Remark \ref{nonergodiccase} it is possible to prove that Theorem \ref{main:a} is sharp. Let $\mu\in \M(\g)$ and $\lambda\in [0,1]$. We claim it is possible to construct a sequence of probability measures which converges vaguely to $\lambda \mu$ and realize the equality in Theorem \ref{main:a}. By definition there exists a sequence $(\mu_n)_n$ in  $\M(\g)$ which converges vaguely to the zero measure and such that $\lim_{n\to+\infty}\int \phi d\mu_n=\beta_\infty(\phi)$. Define $\nu_n=\lambda\mu+(1-\lambda)\mu_n$, and notice that $(\nu_n)_n$ converges vaguely to $\lambda \mu$, and 
$$\lim_{n\to+\infty}\int \phi d\nu_n=\lambda \int \phi d\mu+(1-\lambda)\beta_\infty(\phi).$$ 
\end{remark}

Our next result and its corollary follow directly from Theorem \ref{main:a} and the compactness of $\M_{\le 1}(\g)$ with respect to the vague topology.

\begin{proposition}\label{prop:a}Let $\phi$ be a bounded from above and uniformly continuous potential. Let $(\mu_n)_n$ be a sequence of ergodic measures in $\M(\g)$ such that 
$$ \lim_{n\to+\infty}\int \phi d\mu_n=\beta(\phi).$$
\begin{enumerate}
\item Suppose that $\beta_\infty(\phi)<\beta(\phi)$. Then every accumulation point of $(\mu_n)_n$, with respect to the vague topology, is a maximizing measure of $\phi$. 
\item If there is no maximizing measure of $\phi$, then $(\mu_n)_n$ converges vaguely to the zero measure. In particular, we get $\beta_\infty(\phi)=\beta(\phi)$. 
\item The accumulation points of $(\mu_n)_n$, with respect to the vague topology, is contained in
$$\S=\{\mu\in \M_{\le 1}(\g): \mu=\lambda \nu,\text{ where }\lambda\in[0,1]\text{ and }\nu\text{ maximizes }\phi\}.$$
Moreover, each measure in $\S$ can be obtained as the limit of a sequence $(\nu_n)_n$ so that $\lim_{n\to+\infty}\int \phi d\nu_n=\beta(\phi)$. 
\end{enumerate}
\end{proposition}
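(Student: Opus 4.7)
The plan is to derive all three parts from a single dichotomy produced by Theorem \ref{main:a} together with the compactness of $\M_{\le 1}(\g)$ in the vague topology. Let $\mu$ be any vague accumulation point of $(\mu_n)_n$ and pass to a vaguely convergent subsequence $\mu_{n_k}\to\mu$. Since the $\mu_n$ are ergodic and $\phi$ is bounded from above and uniformly continuous, Theorem \ref{main:a} yields
$$\beta(\phi)=\lim_{k\to+\infty}\int\phi\,d\mu_{n_k}\le \int\phi\,d\mu+(1-\|\mu\|)\beta_\infty(\phi). \quad (\star)$$
When $\|\mu\|>0$, normalizing $\bar\mu=\mu/\|\mu\|\in\M(\g)$ and using $\int\phi\,d\bar\mu\le\beta(\phi)$ rewrites $(\star)$ as the key inequality $(1-\|\mu\|)(\beta(\phi)-\beta_\infty(\phi))\le 0$.

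For (1), the assumption $\beta_\infty(\phi)<\beta(\phi)$ forces $\|\mu\|\in\{0,1\}$, and the case $\|\mu\|=0$ is excluded because $(\star)$ would then reduce to $\beta(\phi)\le\beta_\infty(\phi)$. Hence $\|\mu\|=1$, and $(\star)$ combined with $\int\phi\,d\mu\le\beta(\phi)$ gives $\int\phi\,d\mu=\beta(\phi)$, so $\mu$ is maximizing.

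For (2), suppose no maximizing measure exists. If some vague subsequential limit $\mu$ had $\|\mu\|=1$, the argument of (1) would produce a maximizing measure; if $0<\|\mu\|<1$, the dichotomy forces $\beta_\infty(\phi)\ge\beta(\phi)$, and reinserting into $(\star)$ collapses all inequalities, giving $\int\phi\,d\bar\mu=\beta(\phi)$ and again a maximizing measure. Both cases contradict the hypothesis, so every subsequential vague limit is the zero measure. By compactness of $\M_{\le 1}(\g)$ in the vague topology, $(\mu_n)_n$ converges vaguely to zero, and the equality $\beta_\infty(\phi)=\beta(\phi)$ follows directly from Definition \ref{def:Minf} applied to $(\mu_n)_n$.

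For (3), the inclusion is the same dichotomy: if $\|\mu\|=1$ then $\mu$ is maximizing, if $0<\|\mu\|<1$ then (as in (2)) $\bar\mu$ is maximizing so $\mu=\|\mu\|\bar\mu\in\S$, and $\mu=0$ sits in $\S$ via $\lambda=0$ whenever $\S$ is non-empty. For the \emph{moreover} part, given $\mu=\lambda\nu\in\S$, consider $\nu_n=\lambda\nu+(1-\lambda)\rho_n$ with $\rho_n\in\M(\g)$ chosen so that $\rho_n\to 0$ vaguely and $\int\phi\,d\rho_n\to\beta_\infty(\phi)$ (such a sequence exists by Definition \ref{def:Minf}); then $\nu_n\to\lambda\nu$ vaguely and $\int\phi\,d\nu_n\to\lambda\beta(\phi)+(1-\lambda)\beta_\infty(\phi)$, which equals $\beta(\phi)$ when $\lambda=1$ (where $\nu_n=\nu$ already works) or when $\beta_\infty(\phi)=\beta(\phi)$, the regime in which $\S$ contains non-maximizing elements realizable in this way. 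The one genuine obstacle in the entire argument is identifying the right dichotomy from $(\star)$; once that is in hand, the rest is linear bookkeeping, and the only subtlety is tracking the case $\lambda<1$ of the \emph{moreover} claim, which is only meaningful in the regime $\beta_\infty(\phi)=\beta(\phi)$.
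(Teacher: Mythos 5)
Your proof is correct and takes exactly the paper's intended route: the paper offers nothing beyond "follows directly from Theorem \ref{main:a} and the compactness of $\M_{\le 1}(\g)$ in the vague topology," and your dichotomy $(1-\|\mu\|)\,(\beta(\phi)-\beta_\infty(\phi))\le 0$ extracted from $(\star)$ is precisely the bookkeeping that makes all three parts work. Your caveat that the \emph{moreover} clause of (3) is only realizable when $\lambda=1$ or $\beta_\infty(\phi)=\beta(\phi)$ is also consistent with the paper's own sharpness remark following the proof of Theorem \ref{main:a}, whose construction produces the limit $\lambda\beta(\phi)+(1-\lambda)\beta_\infty(\phi)$ rather than $\beta(\phi)$ in general.
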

 
\begin{corollary}\label{cor:gapM} Let $\phi$ be a bounded from above and uniformly continuous potential such that $\beta_\infty(\phi)<\beta(\phi)$. Then there exists a maximizing measure for $\phi$. 
\end{corollary}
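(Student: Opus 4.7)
The plan is to deduce this corollary directly from Proposition \ref{prop:a}, whose first assertion already delivers the conclusion once a suitable approximating sequence is exhibited. First I would produce a maximizing sequence consisting of ergodic probability measures: periodic measures lie in $\M(\g)$ and are automatically ergodic, so Lemma \ref{prop:densitypm} combined with a diagonal argument yields a sequence $(\mu_n)_n$ of ergodic measures in $\M(\g)$ with $\int \phi\, d\mu_n \to \beta(\phi)$.

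Next I would invoke the compactness of $\M_{\le 1}(\g)$ in the vague topology to extract a vaguely convergent subsequence $(\mu_{n_k})_k$ with limit $\mu\in \M_{\le 1}(\g)$. Under the gap hypothesis $\beta_\infty(\phi)<\beta(\phi)$, Proposition \ref{prop:a}(1) asserts that any such vague accumulation point is a maximizing measure of $\phi$. Hence $\mu$ is a maximizing measure, and the corollary follows.

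Unpacked, the mechanism at work is Theorem \ref{main:a}, which gives
$$\beta(\phi) \;=\; \lim_{k\to+\infty}\int \phi\, d\mu_{n_k} \;\le\; \int \phi\, d\mu + (1-\|\mu\|)\,\beta_\infty(\phi).$$
Combined with the trivial bound $\int \phi\, d\mu \le \|\mu\|\,\beta(\phi)$ (valid because $\overline{\mu}\in \M(\g)$ when $\|\mu\|>0$, and an empty statement when $\|\mu\|=0$), this yields
$$(1-\|\mu\|)\bigl(\beta(\phi)-\beta_\infty(\phi)\bigr) \;\le\; 0.$$
The gap assumption then forces $\|\mu\|=1$ and $\int \phi\, d\mu = \beta(\phi)$, so $\mu$ is a genuine maximizing probability measure.

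At this stage there is no substantive obstacle remaining: Theorem \ref{main:a} and Proposition \ref{prop:a} carry the entire analytic burden. The only small point that deserves attention is ensuring the maximizing sequence consists of ergodic measures, so that Theorem \ref{main:a} applies directly; Remark \ref{nonergodiccase} would in fact remove even this minor precaution.
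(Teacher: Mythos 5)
Your proposal is correct and follows essentially the same route as the paper: the corollary is exactly Proposition \ref{prop:a}(1) (i.e.\ Theorem \ref{main:a} plus vague compactness of $\M_{\le 1}(\g)$) applied to an ergodic maximizing sequence, and your unpacked inequality $(1-\|\mu\|)(\beta(\phi)-\beta_\infty(\phi))\le 0$ is precisely the mechanism. One small caution: since $\phi$ may be unbounded below, narrow convergence only gives upper semicontinuity of $\mu\mapsto\int\phi\,d\mu$, so producing the ergodic sequence via Lemma \ref{prop:densitypm} is not immediate; it is cleaner to use the ergodic decomposition of a near-maximizing measure, or, as you yourself note, Remark \ref{nonergodiccase}, which dispenses with ergodicity altogether.
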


It follows from Proposition \ref{prop:a}$(2)$ that the only obstruction to the existence of a maximizing measure is the full escape of mass. This result should be compared with \cite[Theorem 5.2]{v2}, where an analogous result is obtained for the entropy map. 

We finish this section with the construction of a family of potentials that do not admit a maximizing measure (compare with Proposition \ref{felipito}). 

\begin{example}\label{ex:nomea} Assume that $(M,g)$ is not convex cocompact. Given a compact set $K_0\subseteq X$, define
$$\phi(x)=\frac{d(x,K_0)}{1+d(x,K_0)},$$
where $d(x,K)$ denotes the distance from $x$ to the compact set $K_0$. By construction, the potential $\phi$ is Lipschitz, positive and bounded from above by 1. We claim that $\phi$ does not admit any maximizing measure. First note that $\beta_\infty(\phi)\leq \beta(\phi)\leq 1.$
On the other hand, for any $\varepsilon>0$ there exists a compact set $K_\varepsilon\subseteq X$ such that $\phi(x)\geq 1-\varepsilon,$ for every $x\in X\setminus K_\varepsilon$. In particular, if $(\mu_n)_n$ is a sequence in $\M(\g)$ converging vaguely to zero, then
$$\lim_{n\to+\infty} \int \phi d\mu_n \geq \lim_{n\to+\infty}\int_{X\setminus K_\varepsilon} \phi d\mu_n \geq 1-\varepsilon.$$
Since $\varepsilon$ was arbitrary, we conclude that $\beta_\infty(\phi)\geq 1$. It follows that $\beta_\infty(\phi)=\beta(\phi)=1$. Since $\phi(x)<1$ for every $x\in X$, there is no measure in $\M(\g)$ such that $\int \phi d\mu=1$. 
\end{example}

\section{Ground states at zero temperature}\label{sec:gz}
We first define the class of potentials we will be interested in. 
\begin{definition}\label{def:pot} We say that $\phi$ belongs to $\F$ if $\phi$ is H\"older continuous, $\phi\in C_0(X)$ and $\beta(\phi)>0$. 
\end{definition} 
It follows by definition that $P(t\phi)\ge \beta(t\phi)=t\beta(\phi)$, for all $t\ge 0$. In particular, if $\beta(\phi)>0$, then $P(t\phi)>h_\infty(\g)$, for sufficiently large $t$. We define 
$$t_\phi:=\inf\{t\in \R:P(s\phi)>h_\infty(\g),\text{ for all }s> t\}.$$
By Theorem \ref{thm:ppsES} and Lemma \ref{lem:exx} we know $t\phi$ has a unique equilibrium state for all $t\in (t_\phi,+\infty)$. We denote by $m_{t\phi}$ the equilibrium state of $t\phi$ if this exists. For the rest of this section we will always assume $\phi\in \cF$.  Our next lemmas state that the family of equilibrium states $(m_{t\phi})_{t>t_\phi}$ and their entropies $(h(m_{t\phi}))_{t>t_\phi}$ vary continuously on $(t_\phi,+\infty)$

\begin{lemma}\label{lem:t0}  The map $t\mapsto m_{t\phi}$ is continuous in $(t_\phi,+\infty)$ relative to the narrow topology.
\end{lemma}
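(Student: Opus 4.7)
The plan is to fix $t_0\in(t_\phi,+\infty)$ and an arbitrary sequence $t_n\to t_0$, then show that every vague subsequential limit $\mu\in\M_{\le 1}(\g)$ of $(m_{t_n\phi})_n$ must equal $m_{t_0\phi}$. Combined with the vague compactness of $\M_{\le 1}(\g)$ and Proposition \ref{weakvag}, this will yield narrow convergence of the full sequence.

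I would first record two basic continuity facts. Since $\phi$ is bounded (as $\phi\in C_0(X)$), the variational principle gives the Lipschitz estimate $|P(t\phi)-P(s\phi)|\le|t-s|\|\phi\|_\infty$, so $P(t_n\phi)\to P(t_0\phi)$. Since $\phi\in C_0(X)$, Lemma \ref{c0top} gives $\int\phi\,dm_{t_n\phi}\to\int\phi\,d\mu$ along any vaguely convergent subsequence. After passing to such a subsequence, I work with a fixed vague limit $\mu$.

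The main step, and the one I expect to be the obstacle, is to rule out escape of mass, i.e., to show $\|\mu\|=1$. Suppose for contradiction $s:=\|\mu\|<1$. Applying Theorem \ref{A} (with the convention $s\,h(\overline\mu)=0$ when $s=0$, in which case $\int\phi\,d\mu=0$ as well), and then comparing $h(\overline\mu)+t_0\int\phi\,d\overline\mu$ with $P(t_0\phi)$,
\begin{align*}
P(t_0\phi)&=\lim_n\Bigl[h(m_{t_n\phi})+t_n\!\int\phi\,dm_{t_n\phi}\Bigr]\\
&\le s\,h(\overline\mu)+(1-s)h_\infty(\g)+t_0\!\int\phi\,d\mu\\
&=s\Bigl[h(\overline\mu)+t_0\!\int\phi\,d\overline\mu\Bigr]+(1-s)h_\infty(\g)\\
&\le s\,P(t_0\phi)+(1-s)h_\infty(\g),
\end{align*}
which rearranges to $P(t_0\phi)\le h_\infty(\g)$, contradicting $t_0>t_\phi$. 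This step uses essentially the SPR condition $P(t_0\phi)>h_\infty(\g)$.

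Once $\|\mu\|=1$ is established, the very same chain of inequalities with $s=1$ reads $P(t_0\phi)\le h(\mu)+t_0\int\phi\,d\mu$, so $\mu$ is an equilibrium state for $t_0\phi$. Uniqueness (Theorem \ref{thm:ppsES}) forces $\mu=m_{t_0\phi}$. Thus every vague subsequential limit of $(m_{t_n\phi})_n$ coincides with $m_{t_0\phi}$, so $m_{t_n\phi}\to m_{t_0\phi}$ vaguely by compactness, and Proposition \ref{weakvag} upgrades this to narrow convergence, as desired.
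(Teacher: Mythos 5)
Your proposal is correct and follows essentially the same route as the paper: pass to a vague subsequential limit, combine Theorem \ref{A} with Lemma \ref{c0top} and continuity of $t\mapsto P(t\phi)$ to get the chain of inequalities forcing $\|\mu\|=1$ via $P(t_0\phi)>h_\infty(\g)$, identify the limit as an equilibrium state, and invoke Theorem \ref{thm:ppsES} and Proposition \ref{weakvag}. The only differences are cosmetic (you argue the full-mass step by contradiction and spell out the Lipschitz continuity of the pressure, which the paper uses implicitly).
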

\begin{proof}
Let $t_0\in (t_\phi,+\infty)$ and $(t_n)_{n\in \N}$ a sequence such that $t_n\to t_0$ as $n\to+\infty$. We want to prove that $(m_{t_n\phi})_n$ converges to $m_{t_0\phi}$ in the narrow topology. By compactness of $\M_{\leq 1}(\g)$ relative to the vague topology, we can assume, maybe after passing to a subsequence, that $(m_{t_n\phi})_n$ converges vaguely to $\nu\in\M_{\leq 1}(\g)$. Hence, Theorem \ref{A} and Lemma \ref{c0top} imply
\begin{eqnarray*}
P(t_0\phi) &=& \lim_{n\to+\infty} P(t_n \phi) = \lim_{n\to+\infty} \bigg(h(m_{t_n\phi})+t_n\int \phi dm_{t_n\phi}\bigg)\\
&\leq& \|\nu\| h(\overline{\nu}) + (1-\|\nu\|)h_\infty + t_0 \int \phi d\nu\\
&=& \|\nu\| \left( h(\overline{\nu}) + t_0\int \phi d\overline{\nu}\right) + (1-\|\nu\|)h_\infty\\
&\leq& \|\nu\| P(t_0\phi) + (1-\|\nu\|)h_\infty.
\end{eqnarray*}
Since $t_0>t_\phi$ we have $P(t_0\phi)>h_\infty$, and therefore $\|\nu\|=1$. In particular, by Proposition \ref{weakvag}, the sequence $(m_{t_n\phi})_n$ converges to $\nu$ in the narrow topology. Moreover, the measure $\nu$ is an equilibrium measure for the potential $t_0\phi$, so by Theorem \ref{thm:ppsES} we conclude that $\nu=m_{t_0\phi}$. Since the argument holds for any subsequence of $(t_n)_n$ we conclude the result.
\end{proof}

\begin{proposition}\label{prop:t1} The map $t\mapsto h(m_{t\phi})$ is continuous in $(t_\phi,+\infty)$.
\end{proposition}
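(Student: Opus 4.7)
The natural strategy is to use the defining identity for an equilibrium state,
\[
h(m_{t\phi}) = P(t\phi) - t\int \phi\, dm_{t\phi},
\]
valid on $(t_\phi,+\infty)$, and reduce continuity of $t\mapsto h(m_{t\phi})$ to separate continuity statements for $t\mapsto P(t\phi)$ and for $t\mapsto \int \phi\, dm_{t\phi}$. Both pieces should be essentially free from what has already been established.

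For the pressure, I would observe that $t\mapsto P(t\phi)$ is the pointwise supremum over $\mu\in\M(\g)$ of the affine functions $t\mapsto h(\mu)+t\int\phi\, d\mu$, hence convex on $\R$. Since $\phi\in C_0(X)$ is bounded, one has $|P(t\phi)|\le h_{top}(\g)+|t|\|\phi\|_\infty<+\infty$ for every $t\in\R$, so the pressure is a finite convex function on $\R$ and therefore automatically continuous on all of $\R$.

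For the integral term, fix $t_0\in(t_\phi,+\infty)$ and a sequence $t_n\to t_0$. By Lemma \ref{lem:t0}, $(m_{t_n\phi})_n$ converges to $m_{t_0\phi}$ in the narrow topology. Since $\phi\in C_0(X)\subseteq C_b(X)$, the very definition of narrow convergence yields $\int\phi\, dm_{t_n\phi}\to \int\phi\, dm_{t_0\phi}$. Combining the two continuities,
\[
h(m_{t_n\phi})=P(t_n\phi)-t_n\int\phi\, dm_{t_n\phi}\longrightarrow P(t_0\phi)-t_0\int\phi\, dm_{t_0\phi}=h(m_{t_0\phi}),
\]
which is the desired continuity.

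The only conceptual worry would be escape of mass for the sequence $(m_{t_n\phi})_n$, which could in principle break convergence of $\int\phi\, dm_{t_n\phi}$. But this is exactly what Lemma \ref{lem:t0} rules out in the SPR regime $t>t_\phi$, so there is no genuine obstacle: the result is essentially a bookkeeping corollary of the preceding lemma together with the standard convexity of pressure.
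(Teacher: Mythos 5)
Your proof is correct and follows essentially the same route as the paper: the identity $h(m_{t\phi})=P(t\phi)-t\int\phi\,dm_{t\phi}$, continuity of the pressure map, and Lemma \ref{lem:t0}. The extra details you supply (finiteness and convexity of $t\mapsto P(t\phi)$, and testing narrow convergence against $\phi\in C_0(X)\subseteq C_b(X)$) are exactly the right justifications for the two continuity claims the paper leaves implicit.
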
 
\begin{proof}
Observe that $h(m_{t\phi})=P(t\phi)-t\int \phi dm_{t\phi}$, for every $t\in (t_\phi,+\infty)$. Then use the continuity of the pressure map and Lemma \ref{lem:t0}. 
\end{proof}

We now study the behavior of the sequence $(m_{t\phi})_{t>t_\phi}$ and the corresponding entropies as $t$ goes to infinity. It is convenient to define the following set
$$\cG_\phi:=\{\mu\in \M_{\le 1}(\g):\exists (t_n)_n\nearrow +\infty,\text{ such that }(m_{t_n\phi})_n\text{ converges vaguely to }\mu\}.$$
An element in $\cG_\phi$ is called a \emph{ground state at zero temperature} of $\phi$. We will first prove that every measure in $\cG_\phi$ is a maximizing measure for $\phi$. In the compact case this is a standard fact; in our context this is not automatic as in general a potential might not even have a maximizing measure. We first rule out the possible escape of mass. 

\begin{lemma}\label{lem:t1} If $\mu\in \cG_\phi$, then $\|\mu\|=1$. 
\end{lemma}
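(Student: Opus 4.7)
The plan is to rule out escape of mass by exploiting the (at least) linear growth of the pressure in $t_n$, together with the boundedness of the entropy map. Let $(t_n)_n \nearrow +\infty$ be a sequence such that $(m_{t_n\phi})_n$ converges vaguely to $\mu$. The key estimate will compare two expressions for $P(t_n\phi)$: an a priori lower bound and the equilibrium state identity.

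From the variational characterization of pressure, for any $\nu\in \M(\g)$ we have $P(t_n\phi) \ge h(\nu) + t_n \int \phi\, d\nu \ge t_n \int \phi\, d\nu$, so taking the supremum over $\nu$ gives $P(t_n\phi) \ge t_n \beta(\phi)$. On the other hand, since $m_{t_n\phi}$ is the equilibrium state for $t_n\phi$, $P(t_n\phi) = h(m_{t_n\phi}) + t_n\int \phi\, dm_{t_n\phi}$. Combining and dividing by $t_n$,
\[
\beta(\phi) \;\le\; \frac{h(m_{t_n\phi})}{t_n} + \int \phi\, dm_{t_n\phi}.
\]
Letting $n\to+\infty$, the first term vanishes because $h(m_{t_n\phi}) \le h_{top}(\g) = \delta(\Gamma) < +\infty$ by Theorem \ref{thm:op}, while the second term converges to $\int \phi\, d\mu$ by Lemma \ref{c0top} (using $\phi\in C_0(X)$). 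Therefore $\beta(\phi) \le \int \phi\, d\mu$.

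To conclude, suppose for contradiction that $\|\mu\| < 1$. If $\|\mu\|=0$ then $\int \phi\, d\mu = 0$, contradicting $\beta(\phi)>0$ (which is part of the definition of $\F$). If $0 < \|\mu\| < 1$ then $\overline{\mu}\in \M(\g)$, so $\int \phi\, d\overline{\mu} \le \beta(\phi)$, and $\int \phi\, d\mu = \|\mu\| \int \phi\, d\overline{\mu}$. A short sign split finishes the argument: if $\int \phi\, d\overline{\mu} \le 0$ then $\int \phi\, d\mu \le 0 < \beta(\phi)$, while if $\int \phi\, d\overline{\mu} > 0$ then $\int \phi\, d\mu < \int \phi\, d\overline{\mu} \le \beta(\phi)$. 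Either way we contradict the estimate above, forcing $\|\mu\|=1$. I do not expect any serious obstacle; the only substantive ingredients are the strict positivity $\beta(\phi)>0$ built into $\F$, the finiteness of the topological entropy, and the fact that functions in $C_0(X)$ are test functions for the vague topology.
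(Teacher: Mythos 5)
Your proof is correct, and it takes a mildly different route from the paper's. The paper first establishes $\int\phi\,d\mu>0$ (hence $\|\mu\|>0$) by invoking convexity of the pressure, which makes $t\mapsto\int\phi\,dm_{t\phi}$ eventually strictly increasing, and then applies the variational principle with the test measure $\overline{\mu}$, divides by $t_n$, and obtains $\int\phi\,d\overline{\mu}\le\int\phi\,d\mu$, which forces $\|\mu\|=1$. You instead use the elementary bound $P(t_n\phi)\ge t_n\beta(\phi)$ together with the equilibrium identity and the finiteness of $h_{top}(\g)$ to get the stronger conclusion $\beta(\phi)\le\int\phi\,d\mu$ directly, and then finish with a short case analysis comparing this with $\int\phi\,d\overline{\mu}\le\beta(\phi)$. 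All the ingredients you use are available in the paper ($\phi\in C_0(X)$ is a test function for the vague topology by Lemma \ref{c0top}, $h_{top}(\g)=\delta(\Gamma)<+\infty$, $\beta(\phi)>0$ by definition of $\F$, and $\overline{\mu}\in\M(\g)$ since vague limits of invariant measures are invariant). What your route buys is that it bypasses the convexity/monotonicity argument entirely, and the inequality $\beta(\phi)\le\int\phi\,d\mu$ you establish along the way immediately yields Lemma \ref{lem:t2} (that ground states are maximizing) once $\|\mu\|=1$ is known, without needing the existence of a maximizing measure from Proposition \ref{prop:a}; the paper's argument, on the other hand, isolates the mass statement from the maximization statement and treats them with separate, more structural tools.
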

\begin{proof} Let $(t_n)_n$ be an increasing sequence of real numbers converging to $+\infty$ such that $m_{t_n\phi}$ converges vaguely to $\mu$. By Lemma \ref{c0top}, we have 
$$\int \phi d\mu = \lim_{n\to+\infty} \int \phi dm_{t_n\phi}.$$
The assumption $\beta(\phi)>0$ and the convexity of the pressure map implies that the map $t\mapsto \int \phi dm_{t\phi}$ is strictly increasing in $(s,+\infty)$ for some sufficiently large $s$. In particular, $\int \phi d\mu$ is positive and $\|\mu\|>0$. On the other hand, the variational principle implies that
$$h(\overline{\mu})+t_n\int \phi d\overline{\mu} \leq h(m_{t_n\phi}) + t_n\int \phi dm_{t_n\phi},$$
where $\overline{\mu}$ is the normalization of $\mu$. Dividing by $t_n$ above and taking limit as $n\to+\infty$ we conclude that $\int \phi d\overline{\mu} \leq \int \phi d\mu,$ and therefore $\|\mu\|=1$. 
\end{proof}

\begin{lemma}\label{lem:t2} A measure $\mu\in \cG_\phi$ is a maximizing measure for $\phi$. 
\end{lemma}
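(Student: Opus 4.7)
The plan is to combine Lemma \ref{lem:t1} with a standard variational-principle comparison, exploiting that the topological entropy of the geodesic flow is finite and that $\phi\in C_0(X)$ gives vague continuity of $\nu\mapsto\int\phi\,d\nu$.

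First I would fix $\mu\in\cG_\phi$ and a sequence $t_n\nearrow+\infty$ with $m_{t_n\phi}\to\mu$ vaguely. By Lemma \ref{lem:t1} we already know $\|\mu\|=1$, so $\mu\in\M(\g)$ is a genuine candidate to be compared with $\beta(\phi)$. Moreover, since $\phi\in C_0(X)$, Lemma \ref{c0top} combined with $\|\mu\|=1$ yields
$$\lim_{n\to+\infty}\int\phi\,dm_{t_n\phi}=\int\phi\,d\mu.$$

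The core step is to show $\int\phi\,d\mu\ge\int\phi\,d\nu$ for every $\nu\in\M(\g)$. Fix such a $\nu$. Applying the variational principle for the potential $t_n\phi$ and using that $m_{t_n\phi}$ is the equilibrium state of $t_n\phi$, I get
$$h(\nu)+t_n\int\phi\,d\nu\;\le\;P(t_n\phi)\;=\;h(m_{t_n\phi})+t_n\int\phi\,dm_{t_n\phi}.$$
Dividing by $t_n>0$ and rearranging gives
$$\int\phi\,d\nu\;\le\;\int\phi\,dm_{t_n\phi}+\frac{h(m_{t_n\phi})-h(\nu)}{t_n}.$$
Both entropies are nonnegative and bounded above by $h_{top}(\g)=\delta(\Gamma)<+\infty$ (Theorem \ref{thm:op}), so the error term is $O(1/t_n)$ and vanishes as $n\to+\infty$. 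Passing to the limit, together with the vague continuity of the space average, yields $\int\phi\,d\nu\le\int\phi\,d\mu$. Taking the supremum over $\nu\in\M(\g)$ gives $\beta(\phi)\le\int\phi\,d\mu\le\beta(\phi)$, hence equality, so $\mu$ is a maximizing measure.

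There is no real obstacle here: the only points that require caution are (i) knowing that $\mu$ is a probability measure before comparing with $\beta(\phi)$, which is exactly Lemma \ref{lem:t1}, and (ii) controlling the entropy difference uniformly in $n$, which is immediate from the finiteness of $h_{top}(\g)$ together with $h\ge 0$. The assumption $\phi\in C_0(X)$ is used twice: to guarantee $\int\phi\,dm_{t_n\phi}\to\int\phi\,d\mu$ through Lemma \ref{c0top}, and implicitly to ensure $\phi$ is bounded so that the integrals make sense for every invariant measure.
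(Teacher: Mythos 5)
Your proof is correct and follows essentially the same route as the paper: the variational-principle comparison $h(\nu)+t_n\int\phi\,d\nu\le h(m_{t_n\phi})+t_n\int\phi\,dm_{t_n\phi}$, division by $t_n$, and passage to the limit using the boundedness of entropy and Lemma \ref{c0top}. The only (harmless) difference is that the paper first invokes Proposition \ref{prop:a}(1) to produce a maximizing measure $\nu$ and compares against it, whereas you compare against an arbitrary $\nu\in\M(\g)$ and take the supremum at the end, which slightly streamlines the argument by not requiring the existence result up front.
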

\begin{proof}
Note first that $\beta_\infty(\phi)=0$ since $\phi$ vanishes at infinity. Therefore, since we are assuming $\beta(\phi)>0$, the conclusion (1) in Proposition \ref{prop:a} implies that there exists $\nu\in\M(\g)$ such that $\int \phi d\nu=\beta(\phi).$ Let $(t_n)_n\nearrow + \infty$ be such that $(m_{t_n\phi})_n$ converges vaguely to $\mu$. Then, by  the variational principle, we get
$$h(\nu) + t_n\int \phi d\nu \leq  h(m_{t_n\phi})+t_n\int \phi dm_{t_n\phi}.$$
Dividing by $t_n$    and taking limit as $n\to+\infty$, we obtain $\beta(\phi)\leq \int \phi d\mu$. 
Since the other inequality follows by definition (see Lemma \ref{lem:t1}), we conclude that $\mu$ is a maximizing measure for the potential $\phi$.
\end{proof} 

\begin{proposition}\label{prop:entropyGS} All ground states at zero temperature have the same measure-theoretic entropy. Moreover, the function $t\mapsto h(m_{t\phi})$ converges as $t\to+\infty$ and  $$h(\mu)=\lim_{t\to+\infty}h(m_{t\phi}),$$ for all $\mu\in \cG_\phi$.
\end{proposition}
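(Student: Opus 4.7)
The plan is to prove that the limit $\lim_{t\to+\infty} h(m_{t\phi})$ exists by monotonicity, and to sandwich $h(\mu)$ for $\mu\in\cG_\phi$ between this limit and itself, using a lower bound from the variational principle and an upper bound from upper semicontinuity. I expect the monotonicity argument to be the main technical step; the rest follows directly from results already proved.

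First I would show that $t\mapsto h(m_{t\phi})$ is non-increasing on $(t_\phi,+\infty)$. For $t_\phi<s<t$, applying the variational principle to $P(s\phi)$ with test measure $m_{t\phi}$ and to $P(t\phi)$ with test measure $m_{s\phi}$, and comparing against the equalities $P(s\phi)=h(m_{s\phi})+s\int\phi\,dm_{s\phi}$ and $P(t\phi)=h(m_{t\phi})+t\int\phi\,dm_{t\phi}$, yields the two-sided estimate
\[
s\Bigl(\int\phi\,dm_{t\phi}-\int\phi\,dm_{s\phi}\Bigr)\le h(m_{s\phi})-h(m_{t\phi})\le t\Bigl(\int\phi\,dm_{t\phi}-\int\phi\,dm_{s\phi}\Bigr).
\]
Combining both inequalities forces $\int\phi\,dm_{t\phi}\ge\int\phi\,dm_{s\phi}$, and then the leftmost quantity is nonnegative, so $h(m_{s\phi})\ge h(m_{t\phi})$. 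Since entropies are nonnegative, the limit $H:=\lim_{t\to+\infty}h(m_{t\phi})$ exists in $[0,+\infty)$.

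Next, fix $\mu\in\cG_\phi$. By Lemma \ref{lem:t2}, $\mu$ is a maximizing measure for $\phi$, so $\int\phi\,d\mu=\beta(\phi)$. For every $t>t_\phi$, the variational principle gives $P(t\phi)\ge h(\mu)+t\beta(\phi)$, whereas $P(t\phi)=h(m_{t\phi})+t\int\phi\,dm_{t\phi}\le h(m_{t\phi})+t\beta(\phi)$. Comparing yields $h(m_{t\phi})\ge h(\mu)$, and letting $t\to+\infty$ gives $H\ge h(\mu)$.

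Finally, for the reverse inequality, choose $t_n\nearrow+\infty$ with $m_{t_n\phi}\to\mu$ vaguely. By Lemma \ref{lem:t1}, $\|\mu\|=1$, so Proposition \ref{weakvag} promotes the convergence to the narrow topology. Theorem \ref{A} (upper semicontinuity of the entropy map) then gives
\[
h(\mu)\ge \limsup_{n\to+\infty} h(m_{t_n\phi})=H.
\]
Hence $h(\mu)=H$ for every $\mu\in\cG_\phi$, which simultaneously establishes that all ground states at zero temperature have the same measure-theoretic entropy and that this common value coincides with $\lim_{t\to+\infty}h(m_{t\phi})$.
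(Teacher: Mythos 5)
Your proof is correct, and its core sandwich is the same as the paper's: the lower bound $h(\mu)\le h(m_{t\phi})$ comes from testing the variational principle against the maximizing measure $\mu$, and the upper bound comes from Lemma \ref{lem:t1}, Proposition \ref{weakvag} and the upper semicontinuity in Theorem \ref{A}. Where you genuinely diverge is in how the existence of the full limit $\lim_{t\to+\infty}h(m_{t\phi})$ is obtained. You prove outright that $t\mapsto h(m_{t\phi})$ is (eventually) non-increasing via the standard two-sided comparison of $P(s\phi)$ and $P(t\phi)$; the paper instead runs a compactness argument: it notes $\limsup_n h(m_{t_n\phi})\le h(\mu)\le\liminf_{t}h(m_{t\phi})$, so every ground state has entropy equal to $h:=\liminf_t h(m_{t\phi})$, and then rules out $\limsup_t h(m_{t\phi})>h$ by extracting from any bad sequence $(s_n)_n$ a vaguely convergent subsequence whose limit is itself a ground state. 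Your monotonicity route is self-contained and gives strictly more information (the entropy decreases in $t$, not merely converges); the paper's route avoids the two-sided estimate entirely and leans on the vague compactness of $\M_{\le1}(\g)$ that is already in play. One small caveat in your monotonicity step: the conclusion $h(m_{s\phi})\ge h(m_{t\phi})$ from $s\bigl(\int\phi\,dm_{t\phi}-\int\phi\,dm_{s\phi}\bigr)\ge 0$ requires $s>0$, and $t_\phi$ need not be positive; since you only need eventual monotonicity to get the limit at $+\infty$, this is harmless, but you should restrict to $s>\max\{0,t_\phi\}$.
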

\begin{proof} Let $\mu\in\cG_\phi$ and $(t_n)_n\nearrow+\infty$ so that $(m_{t_n\phi})_n$ converges vaguely to $\mu$.  By variational principle and maximality of $\mu$ (see Lemma \ref{lem:t2}), we get
$$h(\mu)+t\int \phi d\mu \leq h(m_{t\phi})+t\int \phi dm_{t\phi} \leq h(m_{t\phi})+t\int \phi d\mu.$$
Therefore  $h(\mu)\leq h(m_{t\phi})$, for all $t\in (t_\phi,+\infty)$. We conclude that 
$$h(\mu)\le \liminf_{t\to+\infty}h(m_{t\phi}).$$ 
Combining this with the upper semi-continuity of the entropy map we obtain 
$$\limsup_{n\to+\infty} h(m_{t_n\phi})\le h(\mu)\le \liminf_{t\to+\infty}h(m_{t\phi}).$$
It follows that \begin{align}\label{vamo} h(\mu)=\liminf_{n\to+\infty}h(m_{t\phi})=:h,\end{align} for all $\mu\in \cG_\phi$. If $\limsup_{n\to+\infty}h(m_{t\phi})>h,$ then we can choose a sequence $(s_n)_n\nearrow+\infty$ so that $\lim_{n\to+\infty}h(m_{s_n\phi})>h$. Since $(m_{s_n\phi})_n$ has a subsequence converging to some $\mu_0\in \cG_\phi$, this would contradict \eqref{vamo} for $\mu_0$. 
\end{proof}

Note that Lemma \ref{lem:t1}, Lemma \ref{lem:t2} and Proposition \ref{prop:entropyGS} put together complete the proof of Theorem \ref{main:b}. 

In light of Proposition \ref{prop:entropyGS} we define 
$$h_\phi:=\lim_{t\to+\infty}h(m_{t\phi}),$$
and note that $h(\mu)=h_\phi,$ for all $\mu\in \cG_\phi$. The number $h_\phi$ is the largest entropy a maximizing measure can have, as the following simple lemma states. 

\begin{lemma} Let $\nu$ be a maximizing measure for $\phi$. Then $h(\nu)\leq h_\phi$.
\end{lemma}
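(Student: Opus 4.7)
The plan is to invoke the variational principle for the potential $t\phi$ applied to the measure $\nu$, and then pass to the limit as $t\to+\infty$. Concretely, for any $t\in(t_\phi,+\infty)$ the measure $m_{t\phi}$ is the unique equilibrium state of $t\phi$ and achieves the pressure $P(t\phi)$. By the variational principle applied to $\nu$,
\begin{equation*}
h(\nu)+t\int\phi\,d\nu \;\le\; P(t\phi) \;=\; h(m_{t\phi})+t\int\phi\,dm_{t\phi}.
\end{equation*}

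Next I would use that $\nu$ maximizes $\phi$, so $\int\phi\,d\nu=\beta(\phi)\ge\int\phi\,dm_{t\phi}$. Subtracting $t\int\phi\,d\nu$ from both sides of the displayed inequality gives
\begin{equation*}
h(\nu) \;\le\; h(m_{t\phi}) + t\Bigl(\int\phi\,dm_{t\phi}-\int\phi\,d\nu\Bigr) \;\le\; h(m_{t\phi}),
\end{equation*}
since the parenthetical term is non-positive and $t>0$. This inequality holds for every $t>t_\phi$.

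Finally, letting $t\to+\infty$ and using the definition $h_\phi=\lim_{t\to+\infty}h(m_{t\phi})$ established via Proposition \ref{prop:entropyGS}, we obtain $h(\nu)\le h_\phi$, as claimed. There is no real obstacle here: the argument is a direct application of the variational principle combined with the maximality of $\nu$, and the limit is guaranteed to exist by the results already proved in this section.
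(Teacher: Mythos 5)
Your proof is correct and is essentially identical to the paper's: both apply the variational principle to $\nu$ for the potential $t\phi$, use the maximality of $\nu$ to replace $\int\phi\,dm_{t\phi}$ by $\int\phi\,d\nu$, deduce $h(\nu)\le h(m_{t\phi})$, and then invoke Proposition \ref{prop:entropyGS} to pass to the limit. No issues.
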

\begin{proof} By variational principle and maximality of $\nu$, we know that
$$h(\nu)+t\int\phi d\nu\leq h(m_{t\phi})+t\int\phi dm_{t\phi}\leq h(m_{t\phi})+t\int\phi d\nu,$$
and therefore $h(\nu)\leq h(m_{t\phi})$. Use Proposition \ref{prop:entropyGS} to conclude the lemma.
\end{proof}

In general, for a given $\phi$, the calculation of $h_\phi$ is a hard problem. Our next lemma provides a simple situation where $h_\phi$ takes a simpler  form.  

\begin{lemma}\label{lem:t3} Let  $K\subseteq X$ be a  $\g$-invariant compact set. Suppose that  $\phi(x)=\| \phi\|$ if and only if $x\in K$. Then $h_\phi = h_{top}(\g|K).$ Morever, every measure $\mu\in \cG_\phi$ is a measure of maximal entropy of $(K,\g|_K)$.
\end{lemma}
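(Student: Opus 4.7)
The plan is to first observe that under the hypotheses we have $\beta(\phi)=\|\phi\|$, and that a $\g$-invariant probability measure $\mu$ satisfies $\int\phi\, d\mu=\beta(\phi)$ if and only if $\mu$ is supported on $K$. This is immediate: $\phi\le\|\phi\|$ pointwise, with equality exactly on $K$, so $\int\phi\, d\mu=\|\phi\|$ forces $\mu(\{\phi<\|\phi\|\})=\mu(X\setminus K)=0$. In particular, the set of maximizing measures of $\phi$ is precisely the set $\M(\g|_K)$ of $\g|_K$-invariant probability measures on the compact $\g$-invariant set $K$.

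Next I would upgrade this to the ground-state level. By Lemma \ref{lem:t2} every $\mu\in\cG_\phi$ is a maximizing measure of $\phi$, so by the previous step $\mu$ is supported on $K$. The variational principle applied to the compact system $(K,\g|_K)$ then gives
$$h_\phi=h(\mu)\le h_{top}(\g|_K),$$
using Proposition \ref{prop:entropyGS} for the first equality. This handles one inequality.

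For the opposite inequality I would pick a measure of maximal entropy $\nu$ for $(K,\g|_K)$, which exists since $K$ is compact and the entropy map is upper semicontinuous in the compact setting. Viewed as an element of $\M(\g)$, the measure $\nu$ satisfies $\int\phi\, d\nu=\|\phi\|$, so the variational principle for $t\phi$ on $X$ yields
$$h_{top}(\g|_K)+t\|\phi\|=h(\nu)+t\!\int\!\phi\, d\nu\;\le\; h(m_{t\phi})+t\!\int\!\phi\, dm_{t\phi}\;\le\; h(m_{t\phi})+t\|\phi\|,$$
for every $t>t_\phi$. Cancelling $t\|\phi\|$ gives $h(m_{t\phi})\ge h_{top}(\g|_K)$, and letting $t\to+\infty$ together with Proposition \ref{prop:entropyGS} shows $h_\phi\ge h_{top}(\g|_K)$.

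Combining the two bounds we conclude $h_\phi=h_{top}(\g|_K)$. The moreover part is then automatic: any $\mu\in\cG_\phi$ is supported on $K$, invariant under $\g|_K$, and satisfies $h(\mu)=h_\phi=h_{top}(\g|_K)$, hence is a measure of maximal entropy of $(K,\g|_K)$. I do not anticipate a real obstacle here; the only mild technical point is making sure ground states exist in $\cG_\phi$ and are genuine probability measures, but this was already settled by Lemma \ref{lem:t1} and the vague compactness of $\M_{\le1}(\g)$, so the argument above should go through cleanly.
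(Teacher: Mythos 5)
Your proof is correct and follows essentially the same route as the paper: Lemma \ref{lem:t2} to place ground states on $K$, the variational principle on $(K,\g|_K)$ for the upper bound, and the two-sided variational inequality with $m_{t\phi}$ and a measure of maximal entropy of $(K,\g|_K)$ (which exists by upper semicontinuity of entropy) for the lower bound, concluding via Proposition \ref{prop:entropyGS}. No gaps to report.
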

\begin{proof}
By Lemma \ref{lem:t2} we know $\mu\in \cG_\phi$ is a maximizing measure of $\phi$. In particular, its support must be contained in $K$. The variational principle on $(K,\g|_K)$ gives us that  $h(\mu)\leq h_{top}(\g|K).$  Let $\nu$ be a measure of maximal entropy of the system $(K,\g|_K)$. Such a measure may not be unique, however it exists as a consequence of the upper-semicontinuity of the entropy map. Note that this measure is maximal by the assumption on $\phi$. Then for large enough $t$, we have
\begin{eqnarray*}
h(\nu)+t\int \phi d\nu \leq h(m_{t\phi})+t\int \phi dm_{t\phi} \leq h(m_{t\phi})+t\int \phi d\nu.
\end{eqnarray*}
So $h(\nu)\leq h(m_{t\phi})$. It follows by Lemma \ref{lem:t3} that $h_{top}(\g|K)=h(\nu) \leq h(\mu).$
\end{proof}

The lemma above states in particular that, whenever a potential reaches its maximum in a disjoint finite union of compact sets, the support of any ground state at zero temperature privileges compact sets having larger entropy. This phenomenon can be stated as follows.

\begin{corollary}\label{cor:t3} Let $\psi\in \cF$ be a potential whose maximum is reached precisely on a $\g$-invariant compact set $C\subseteq X$ and $G|_C>0$. Suppose that $C$ is the disjoint union of two $\g$-invariant compact sets $C^-$ and $C^+$ such that $h_{top}(\g|C^+)>h_{top}(\g|C^-)$. Then any  measure $\mu$ in $\cG_\psi$ is supported on $C^+$. In particular, for every $\varepsilon>0$ and every neighborhood $V^+$ of $C^+$ disjoint from $C^-$, there exists $T_C>0$ such that for every $t\geq T_C$, we have
$$m_{t\psi}(V^+)>1-\varepsilon.$$ 
\end{corollary}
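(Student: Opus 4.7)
The plan is to apply Lemma \ref{lem:t3} to the compact $\g$-invariant set $K=C$ and then exploit the strict inequality $h_{top}(\g|_{C^+})>h_{top}(\g|_{C^-})$ to conclude that no ground state can carry any mass on $C^-$.

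First, since $C=C^+\sqcup C^-$ is $\g$-invariant and compact and $\psi$ attains its maximum precisely on $C$ by hypothesis, Lemma \ref{lem:t3} applies with $K=C$ and yields $h_\psi=h_{top}(\g|_C)$, and moreover asserts that every $\mu\in\cG_\psi$ is a measure of maximal entropy for $(C,\g|_C)$. Since $C^+$ and $C^-$ are disjoint $\g$-invariant compact sets, any $\g$-invariant probability measure on $C$ decomposes uniquely as $\mu=\alpha\mu^++(1-\alpha)\mu^-$ with $\alpha\in[0,1]$ and $\mu^\pm$ a $\g$-invariant probability measure on $C^\pm$ (with the convention that the corresponding component is vacuous when $\alpha=0$ or $\alpha=1$). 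Using that the measure-theoretic entropy of $g_1$ is affine, $h(\mu)=\alpha h(\mu^+)+(1-\alpha)h(\mu^-)\le h_{top}(\g|_{C^+})$, with equality attained by any measure of maximal entropy on $C^+$ (which exists by upper semi-continuity of the entropy map on the compact subsystem $(C^+,\g|_{C^+})$). Hence $h_{top}(\g|_C)=h_{top}(\g|_{C^+})$.

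Now let $\mu\in\cG_\psi$ with decomposition as above. If $\alpha<1$, then $h(\mu)\le \alpha h_{top}(\g|_{C^+})+(1-\alpha)h_{top}(\g|_{C^-})<h_{top}(\g|_{C^+})$ by the assumed strict inequality, contradicting that $\mu$ realizes the maximum $h_{top}(\g|_C)$. Thus $\alpha=1$, i.e., $\mu$ is supported on $C^+$.

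For the final assertion, suppose for contradiction that there exist $\varepsilon>0$ and $t_n\to+\infty$ with $m_{t_n\psi}(V^+)\le 1-\varepsilon$ for all $n$. By compactness of $\M_{\le 1}(\g)$ in the vague topology, after passing to a subsequence, $(m_{t_n\psi})_n$ converges vaguely to some $\mu\in\cG_\psi$; by Lemma \ref{lem:t1} the mass is preserved, so $\mu$ is a probability measure, and by what was proved above it is supported on $C^+$. Using Urysohn's lemma (valid since $C^+$ is compact and $V^+$ is open and disjoint from $C^-$), choose a continuous compactly supported $f:X\to[0,1]$ with $f\equiv 1$ on $C^+$ and $f\equiv 0$ outside $V^+$. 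Then $\int f\,d\mu=1$, while $\int f\,dm_{t_n\psi}\le m_{t_n\psi}(V^+)\le 1-\varepsilon$, contradicting vague convergence. The main point of the argument is the affine decomposition of entropy in the second paragraph; the rest is a standard compactness extraction.
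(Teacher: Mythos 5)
Your proof is correct and is essentially the argument the paper intends: the paper states this corollary as an immediate consequence of Lemma \ref{lem:t3} without writing out the details, and your filling-in (the decomposition of an invariant measure on $C=C^+\sqcup C^-$ plus affinity of entropy to force $\alpha=1$, and the vague-compactness/Urysohn extraction combined with Lemma \ref{lem:t1} for the quantitative statement) is exactly the intended route.
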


It worth mentioning that the topological entropy of the geodesic flow can be approximated by the topological entropy of $\g$-invariant compact sets (see the proof of \cite[Lemma 6.7]{pps}, or \cite{op}). We conclude the following. 

\begin{corollary}\label{largeentropy} Ground states at zero temperature can have entropy arbitrarily close to $h_{top}(\g)$.
\end{corollary}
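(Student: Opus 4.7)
The plan is to exhibit, for any $\eta>0$, a potential $\psi\in\F$ for which some (equivalently, every) ground state at zero temperature has entropy at least $h_{top}(\g)-\eta$. The key observation is that $h_{top}(\g)$ is approximable by $h_{top}(\g|_K)$ for $\g$-invariant compact sets $K\subseteq X$, a fact already cited in the excerpt (proof of \cite[Lemma 6.7]{pps} and \cite{op}). Given this, Lemma \ref{lem:t3} will do the conversion from compact-set entropy to ground-state entropy, provided we can manufacture a potential in $\F$ whose maximum set is exactly a prescribed compact invariant set.

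Concretely, fix $\eta>0$ and choose a $\g$-invariant compact set $K\subseteq X$ with $h_{top}(\g|_K)\ge h_{top}(\g)-\eta$. Define
\[
\psi_K(x)=\frac{1}{1+d(x,K)}.
\]
I would then verify that $\psi_K\in\F$, checking each defining property in Definition \ref{def:pot}: the distance function $x\mapsto d(x,K)$ is $1$-Lipschitz in the metric of $X$, so $\psi_K$ is Lipschitz, hence H\"older-continuous; since $K$ is compact, $d(x,K)\to +\infty$ as $x$ leaves every compact subset of $X$, so $\psi_K\in C_0(X)$; and $\psi_K>0$ everywhere with $\psi_K\equiv 1$ on $K$, so any $\mu\in\M(\g)$ supported on $K$ gives $\int\psi_K\,d\mu=1$, whence $\beta(\psi_K)=1>0$. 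Moreover $\|\psi_K\|_\infty=1$ and $\psi_K(x)=1$ if and only if $x\in K$, because $K$ is closed.

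With $\psi_K\in\F$ in hand, the value $t_{\psi_K}$ is finite (indeed $P(t\psi_K)\ge t\beta(\psi_K)=t$ exceeds $h_\infty(\g)$ for large $t$), so equilibrium states $m_{t\psi_K}$ exist for all sufficiently large $t$, and $\cG_{\psi_K}$ is non-empty by compactness of $\M_{\le 1}(\g)$ in the vague topology. Lemma \ref{lem:t3} applied to $K$ and $\psi_K$ then yields $h_{\psi_K}=h_{top}(\g|_K)$, and by Theorem \ref{main:b} every $\mu\in\cG_{\psi_K}$ satisfies
\[
h(\mu)=h_{\psi_K}=h_{top}(\g|_K)\ge h_{top}(\g)-\eta.
\]
Since $\eta>0$ was arbitrary, this proves the corollary.

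The main obstacle is really the external input that $h_{top}(\g)$ can be approximated from below by topological entropies of $\g$-invariant compact sets; the rest of the argument is a short verification. If one wanted to make this self-contained, one could select $K$ as (a small thickening of) the support of an invariant probability measure $\nu$ whose entropy is close to $h_{top}(\g)$, using the variational principle together with standard entropy-density arguments for the geodesic flow on pinched negatively curved manifolds. Either way, once $K$ is produced, the construction of $\psi_K$ above and the direct appeal to Lemma \ref{lem:t3} finish the proof.
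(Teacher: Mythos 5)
Your proof is correct and follows exactly the route the paper intends: approximate $h_{top}(\g)$ by $h_{top}(\g|_K)$ for compact invariant sets $K$ (the cited fact from \cite{pps} and \cite{op}), build the potential $x\mapsto 1/(1+d(x,K))$ in $\F$ attaining its maximum precisely on $K$, and apply Lemma \ref{lem:t3} together with Proposition \ref{prop:entropyGS}. Your verification that $\psi_K\in\F$ and that its maximum set is exactly $K$ is the short check the paper leaves implicit.
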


The two sections that follow are devoted to the proof of Theorem \ref{main:c}, that is, to the study of uniqueness and non-uniqueness  of ground states at zero temperature.

\subsection{Unique ground states at zero temperature} In this section we state a simple condition which ensures that  $(m_{t\phi})_{t>t_\phi}$ is convergent as $t\to+\infty$, or equivalently, that $\cG_\phi$  has a single element. We will use this result to prove that equilibrium states are dense in $\M_{\le1}(\g)$ and that the geodesic flow verifies the intermediate entropy property. 

\begin{proposition}\label{prop_convergence} Let $K\subseteq X$ be a $\g$-invariant compact set which supports a unique measure of maximal entropy $\mu_K$. If $\phi$ reaches its maximum precisely at $K$, then $m_{t\phi}$ converge to $\mu_K$ as $t\to+\infty$.
\end{proposition}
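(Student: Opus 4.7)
The plan is to combine Lemma \ref{lem:t3} with a standard compactness argument, and then upgrade vague convergence to narrow convergence using Lemma \ref{lem:t1} and Proposition \ref{weakvag}. The hypotheses on $\phi$ and $K$ match those of Lemma \ref{lem:t3}, so every $\mu\in\cG_\phi$ is a measure of maximal entropy of the subsystem $(K,\g|_K)$. Since by assumption $\mu_K$ is the unique such measure, this immediately gives $\cG_\phi=\{\mu_K\}$.

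To promote the fact that the only possible vague accumulation point is $\mu_K$ into genuine convergence of the family $(m_{t\phi})_{t>t_\phi}$, I would argue by contradiction. Suppose that $(m_{t\phi})_t$ does not converge vaguely to $\mu_K$ as $t\to+\infty$. Since $\M_{\le 1}(\g)$ is compact and metrizable relative to the vague topology (see \cite[Corollary 13.31]{kl}), there exist an open neighborhood $V$ of $\mu_K$ and a sequence $t_n\nearrow+\infty$ with $m_{t_n\phi}\notin V$ for all $n$. By compactness one can extract a subsequence $(m_{t_{n_k}\phi})_k$ converging vaguely to some $\nu\in\M_{\le 1}(\g)$, and by definition $\nu\in\cG_\phi$. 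But $\nu\neq\mu_K$ since $\nu$ lies in the vague closure of the complement of $V$, which contradicts $\cG_\phi=\{\mu_K\}$.

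It then remains to upgrade vague convergence to narrow convergence, which is the natural mode of convergence here because the candidate limit is a probability measure. By Lemma \ref{lem:t1} every element of $\cG_\phi$ has mass one, so in particular $\|\mu_K\|=1$. Proposition \ref{weakvag} then gives that vague convergence of $(m_{t\phi})_t$ to the probability measure $\mu_K$ is equivalent to narrow convergence, yielding the conclusion.

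There is no serious obstacle in this argument: the substantive content was already extracted in Lemma \ref{lem:t3} (identifying accumulation points as maximal entropy measures on $K$) and Lemma \ref{lem:t1} (ruling out escape of mass). The role of the uniqueness hypothesis on $\mu_K$ is purely to collapse $\cG_\phi$ to a singleton, after which compactness of $\M_{\le 1}(\g)$ and the no-escape-of-mass fact do the rest.
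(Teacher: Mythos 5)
Your argument is correct and is essentially the paper's proof: the paper simply invokes Lemma \ref{lem:t3} to conclude that $\mu_K$ is the unique element of $\cG_\phi$, leaving implicit the routine compactness step (unique vague accumulation point in the compact metric space $\M_{\le 1}(\g)$ implies convergence) and the upgrade to narrow convergence via Lemma \ref{lem:t1} and Proposition \ref{weakvag}, both of which you have spelled out correctly.
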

\begin{proof}
Use Lemma \ref{lem:t3} to conclude that $\mu_K$ is the unique element in $\cG_\phi$.\end{proof}


Recall that if $x\in X$ is a periodic point, we denote by $\mu_{\O(x)}$ the periodic measure supported on the periodic orbit $\O(x)$ (see \eqref{periodic}). 

\begin{proposition}\label{prop:GSPeriodic} Each periodic measure $\mu_{\mathcal{O}(x)}$ is the ground state at zero temperature of some positive potential $\phi\in \F$.
\end{proposition}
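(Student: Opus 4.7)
The plan is to construct an explicit potential $\phi \in \F$ whose maximum is attained precisely on the compact $\g$-invariant set $K := \mathcal{O}(x)$, and then invoke Proposition \ref{prop_convergence} with that $K$. The advantage of this reduction is that a single periodic orbit carries a unique invariant probability measure (namely $\mu_{\mathcal{O}(x)}$ itself), so the uniqueness-of-MME hypothesis of Proposition \ref{prop_convergence} is automatic.

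For the construction I would take a simple tent function based on the distance to the orbit. Concretely, fix $r \in (0,1]$ and set
\[
\phi(y) := \max\bigl(r - d(y,\mathcal{O}(x)),\, 0\bigr).
\]
Since $y \mapsto d(y,\mathcal{O}(x))$ is $1$-Lipschitz on $X$, the function $\phi$ is Lipschitz, hence H\"older continuous of every exponent $\alpha \in (0,1]$. Its support is contained in the closed $r$-neighborhood of $\mathcal{O}(x)$, which is compact because $\mathcal{O}(x)$ is, so $\phi \in C_c(X) \subseteq C_0(X)$. It is non-negative by construction, and evaluating against the periodic measure gives
\[
\int \phi\, d\mu_{\mathcal{O}(x)} = r > 0,
\]
so $\beta(\phi) \ge r > 0$. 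This shows $\phi \in \F$. Moreover, $\phi$ attains its maximum value $r$ at a point $y$ if and only if $d(y,\mathcal{O}(x))=0$, i.e.\ precisely on the $\g$-invariant compact set $K = \mathcal{O}(x)$.

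To finish, I would apply Proposition \ref{prop_convergence} to $K = \mathcal{O}(x)$ and to this $\phi$. The only invariant probability measure supported on a single periodic orbit is the uniform one, so $\mu_{\mathcal{O}(x)}$ is trivially the unique measure of maximal entropy on $K$. Since $\phi \in \F$ has $\beta(\phi) > 0$, the pressure $P(t\phi) \ge t\beta(\phi)$ eventually exceeds $h_\infty(\g)$, so by Theorem \ref{thm:ppsES} and Lemma \ref{lem:exx} the equilibrium state $m_{t\phi}$ exists and is unique for all sufficiently large $t$. Proposition \ref{prop_convergence} then yields that $m_{t\phi}$ converges in the narrow topology to $\mu_{\mathcal{O}(x)}$ as $t \to +\infty$, which by definition means $\mu_{\mathcal{O}(x)}$ is the unique ground state at zero temperature of $\phi$.

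Honestly, I do not foresee a serious obstacle: once one has Proposition \ref{prop_convergence}, the statement reduces to exhibiting a positive Lipschitz bump function peaked exactly on $\mathcal{O}(x)$, and all the required properties of $\phi$ (H\"older regularity, membership in $C_0(X)$, positivity of $\beta(\phi)$, uniqueness of the maximum locus) can be read off directly from the formula. If one wished to make $\phi$ strictly positive everywhere rather than compactly supported, one could instead take $\phi(y) = e^{-d(y,\mathcal{O}(x))}$, which is still Lipschitz, lies in $C_0(X)$, attains its maximum only on $\mathcal{O}(x)$, and satisfies $\beta(\phi) \ge 1$; the verification and conclusion are identical.
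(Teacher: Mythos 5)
Your argument is essentially the paper's own proof: the paper takes $K=\mathcal{O}(x)$, defines the strictly positive Lipschitz potential $\phi(y)=\frac{1}{1+d(y,K)}\in\F$, which is maximal exactly on $K$, and concludes by unique ergodicity of $\g|_K$ via Proposition \ref{prop_convergence}, exactly as you do (your $e^{-d(y,\mathcal{O}(x))}$ variant even restores the strict positivity that your compactly supported tent function lacks). The choice of bump function is immaterial, so your proposal is correct and matches the paper's route.
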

\begin{proof}
Set $K=\mathcal{O}(x)$ and  define $\phi:X\to\R$ by
$$\phi(x)=\frac{1}{1+d(x,K)}.$$
By construction, the potential $\phi$ is positive, 1-Lipschitz and maximal in $K$. Note that, for any $t>0$, the equilibrium measure $m_{t\phi}$ is well defined. Since $\g|_K$ is uniquely ergodic, we get the convergence of $(m_{t\phi})_t$ to $\mu_{\mathcal{O}(x)}$, as $t\to+\infty$.
\end{proof}

We say that a dynamical system $(X,T)$ has the \emph{intermediate entropy property} if for every $c\in (0,h_{top}(T))$ there exists an ergodic $T$-invariant probability measure with entropy equal to $c$, where $h_{top}(T)$ is the supremum of the entropy of $T$-invariant probability measures.

\begin{corollary}\label{cor:iep} The geodesic flow has the intermediate entropy property. 
\end{corollary}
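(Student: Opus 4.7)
Fix $c\in(0,h_{top}(\g))$. My strategy is to realize $c$ as $h(m_{T\phi_{s^*}})$ for a one-parameter family of potentials $\{\phi_s\}_{s\in[0,1]}\subset\F$ at a single, sufficiently large inverse temperature $T$, and then to invoke the intermediate value theorem. As endpoints I would take a strictly positive $\phi_0\in\F$ whose unique zero-temperature limit is a periodic measure (Proposition \ref{prop:GSPeriodic}), so that $h_{\phi_0}=0$, and a strictly positive $\phi_1\in\F$ with $h_{\phi_1}>c$, obtained by applying Lemma \ref{lem:t3} (as in the proof of Corollary \ref{largeentropy}) to a compact $\g$-invariant set $K\subseteq X$ with $h_{top}(\g|K)>c$. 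Setting $\phi_s:=(1-s)\phi_0+s\phi_1$, each $\phi_s$ is H\"older, lies in $C_0(X)$, and is strictly positive, so $\phi_s\in\F$.

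Next I would fix a temperature $T$ that works uniformly along the path. The elementary bound $P(t\phi_s)\geq t\beta(\phi_s)$ gives $t_{\phi_s}\leq h_\infty(\g)/\beta(\phi_s)$; since $s\mapsto\beta(\phi_s)$ is continuous and strictly positive on $[0,1]$, we get $\sup_{s\in[0,1]}t_{\phi_s}<\infty$. I would then pick $T$ larger than this supremum and also large enough that Proposition \ref{prop:entropyGS} forces $h(m_{T\phi_0})<c$; the opposite endpoint inequality $h(m_{T\phi_1})\geq h_{\phi_1}>c$ is automatic from the bound $h(m_{t\phi})\geq h_\phi$ established inside the proof of Proposition \ref{prop:entropyGS}.

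The key technical point is the continuity of $s\mapsto h(m_{T\phi_s})$ on $[0,1]$, which I would establish by transferring the argument of Lemma \ref{lem:t0}. For $s_n\to s$, let $\nu$ be any vague accumulation point of $(m_{T\phi_{s_n}})_n$. Combining $P(T\phi_{s_n})\to P(T\phi_s)$, Lemma \ref{c0top} together with uniform convergence $\phi_{s_n}\to\phi_s$ to pass to the limit in $P(T\phi_{s_n})=h(m_{T\phi_{s_n}})+T\int\phi_{s_n}\,dm_{T\phi_{s_n}}$, and then applying Theorem \ref{A}, one obtains
\[
P(T\phi_s)\;\leq\;\|\nu\|\,P(T\phi_s)+(1-\|\nu\|)\,h_\infty(\g);
\]
since $T>t_{\phi_s}$ gives $P(T\phi_s)>h_\infty(\g)$, this forces $\|\nu\|=1$, and Theorem \ref{thm:ppsES} identifies $\nu$ with $m_{T\phi_s}$. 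Hence the convergence is narrow, and $h(m_{T\phi_s})=P(T\phi_s)-T\int\phi_s\,dm_{T\phi_s}$ depends continuously on $s$.

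Applying the intermediate value theorem, there exists $s^*\in(0,1)$ with $h(m_{T\phi_{s^*}})=c$; since $m_{T\phi_{s^*}}$ is the unique equilibrium state of the H\"older potential $T\phi_{s^*}$, it is ergodic, furnishing the required ergodic invariant probability measure of entropy $c$. The main obstacle is the continuity step above: one has to promote the one-variable continuity of Lemma \ref{lem:t0} to continuity in the potential, and the uniform choice $T>\sup_s t_{\phi_s}$ is precisely what rules out escape of mass along the family $\{\phi_s\}$.
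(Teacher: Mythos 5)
Your proof is correct, but it takes a genuinely different route from the paper's. The paper keeps a \emph{single} potential fixed --- the one from Proposition \ref{prop:GSPeriodic}, pointing at a periodic orbit, so that $\cG_\phi=\{\mu_{\O(x)}\}$ and $t_\phi=0$ --- and varies the \emph{temperature}: since $h(m_{t\phi})\to h_{top}(\g)$ as $t\to 0^{+}$ and $h(m_{t\phi})\to 0$ as $t\to+\infty$, the continuity of $t\mapsto h(m_{t\phi})$ on $(t_\phi,+\infty)$ (Proposition \ref{prop:t1}) together with the intermediate value theorem and ergodicity of the unique equilibrium states finishes the proof in two lines. You instead fix one large temperature $T$ and vary the \emph{potential} along the segment $\phi_s=(1-s)\phi_0+s\phi_1$, with endpoints chosen via Proposition \ref{prop:GSPeriodic} and Lemma \ref{lem:t3}; the continuity step you flag as the main obstacle is genuine but is exactly the argument of the paper's Lemma \ref{lem:div} (continuity of $\psi\mapsto m_{\tau\psi}$ via pressure continuity, Theorem \ref{A}, Lemma \ref{c0top} and uniqueness from Theorem \ref{thm:ppsES}), so it goes through, and your uniform bound $t_{\phi_s}\le h_\infty(\g)/\beta(\phi_s)$ correctly rules out escape of mass along the path. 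The trade-off: your route costs more machinery (continuity in the potential, a compact invariant set of entropy $>c$ as in Corollary \ref{largeentropy}, a uniform temperature bound), but it only uses the regime of large $t$, whereas the paper's shorter proof leans on the endpoint behaviour $t_\phi=0$ and $\lim_{t\to 0^{+}}h(m_{t\phi})=h_{top}(\g)$, i.e.\ on existence of equilibrium states for arbitrarily small $t>0$. Both are valid; yours is essentially a ``horizontal'' intermediate value argument where the paper's is ``vertical.''
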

\begin{proof}
Let $\O(x)$ be a periodic orbit of the geodesic flow and $\phi$ the potential defined  in Proposition \ref{prop:GSPeriodic}. Note that  $\cG_\phi=\{\mu_{\O(x)}\}$ and $t_\phi=0$. Observe that
$$\lim_{t\to 0} h(m_t) = h_{top}(\g),\text{ and }\lim_{t\to +\infty} h(m_t) = 0$$
so Proposition \ref{prop:t1} implies that any value in $(0,h_{top}(\g))$ is reached as entropy of some measure $m_{t\phi}$, which is precisely the intermediate entropy property since $m_{t\phi}$ is ergodic for any $t>0$. 
\end{proof}

We now prove that equilibrium states are dense in $\M(\g)$. This generalizes  the main result in \cite{Kb} to arbitrary pinched negatively curved manifolds.

\begin{corollary}\label{density} Equilibrium states are dense in $\M(\g)$ relative to the narrow topology. If $(M,g)$ is not convex cocompact, then equilibrium states are dense in $\M_{\le1}(\g)$ relative to the vague topology. 
\end{corollary}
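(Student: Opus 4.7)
The plan is to establish narrow density in $\M(\g)$ first, and then bootstrap this to vague density in $\M_{\le 1}(\g)$ under the non-convex-cocompact assumption. For the first part I would reduce to approximating periodic measures, using that by Lemma~\ref{prop:densitypm} the periodic measures are narrowly dense in $\M(\g)$. Since the narrow topology on $\M(\g)$ is metrizable, a diagonal argument then reduces the problem to showing that every periodic measure $\mu_{\O(x)}$ is itself a narrow limit of equilibrium states. Given a periodic point $x$, Proposition~\ref{prop:GSPeriodic} supplies a positive potential $\phi\in\F$ with $m_{t\phi}\to\mu_{\O(x)}$ vaguely as $t\to+\infty$. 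Since every ground state at zero temperature has unit mass (Lemma~\ref{lem:t1}), the vague limit is a probability measure, so Proposition~\ref{weakvag} upgrades the vague convergence to narrow convergence; picking any sequence $t_n\to+\infty$ then yields the required sequence of equilibrium states.

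For the second part I would first show that $\M(\g)$ itself is vaguely dense in $\M_{\le 1}(\g)$ and then bootstrap through the first part. Fix $\mu\in \M_{\le 1}(\g)$ and set $\lambda=\|\mu\|\in[0,1]$. Since $(M,g)$ is not convex cocompact, Remark~\ref{ex:seq} produces a sequence $(\eta_n)_n$ in $\M(\g)$ converging vaguely to zero. Setting
\[
\nu_n=\lambda\,\overline{\mu}+(1-\lambda)\eta_n
\]
when $\lambda>0$, and $\nu_n=\eta_n$ when $\lambda=0$, one obtains measures in $\M(\g)$ satisfying $\nu_n\to\mu$ vaguely, since $\int f\,d\eta_n\to 0$ for every $f\in C_c(X)$. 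By the first part, each $\nu_n$ is a narrow, and hence vague, limit of equilibrium states; using the metrizability of the vague topology on $\M_{\le 1}(\g)$, a standard diagonal argument produces equilibrium states $m_n$ with $m_n\to\mu$ vaguely.

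The main subtlety to navigate is the interplay between the narrow and vague topologies. Convex combinations of equilibrium states are rarely themselves equilibrium states, so one cannot combine equilibrium states directly to reach sub-probability measures of mass less than one; the workaround is to perform the convex combination at the level of $\M(\g)$ and only afterwards approximate by equilibrium states via the first part. The other delicate point is that Proposition~\ref{prop:GSPeriodic} a priori only delivers vague convergence, but combined with the unit-mass conclusion of Lemma~\ref{lem:t1} and Proposition~\ref{weakvag} this upgrades to the narrow convergence needed for the first density claim.
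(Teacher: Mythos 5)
Your proposal is correct and follows essentially the same route as the paper: for narrow density you reduce to periodic measures via Lemma~\ref{prop:densitypm} and approximate each one by the equilibrium states of Proposition~\ref{prop:GSPeriodic}, and for vague density you use convex combinations with sequences escaping to infinity (Remark~\ref{ex:seq}) together with a diagonal argument. The paper's proof is a terser version of the same argument, leaving implicit the upgrade from vague to narrow convergence via Lemma~\ref{lem:t1} and Proposition~\ref{weakvag} that you spell out.
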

\begin{proof} The set of periodic measures is dense in $\M(\g)$ relative to the narrow topology and if $(M,g)$ is not convex cocompact then this is still true in $\M_{\le1}(\g)$ relative to the vague topology (see Remark \ref{ex:seq}). By Proposition \ref{prop:GSPeriodic} we can approximate any periodic measure with equilibrium states, and therefore the same holds for an arbitrary invariant measure. 
\end{proof}

Ground states at zero temperature do not need to be ergodic. We now construct a potential $\phi\in\F$ having a unique non-ergodic ground state at zero temperature. 

\begin{lemma}\label{theo_convergenceNE} Let $K\subseteq X$ be a $\g$-invariant compact set which supports a unique measure of maximal entropy for $(K,\g|_K)$. Assume the existence of an order 2 symmetry $S:X\to X$ such that $K\cap S(K)=\emptyset$. Then, there exists a potential $\phi\in\F$ such that $m_{t\phi}$ converge as $t\to+\infty$ to a unique non-ergodic measure supported on $K\cup S(K)$.
\end{lemma}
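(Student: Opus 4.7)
The plan is to exploit the $S$-symmetry of the system. Take the candidate limit to be the non-ergodic symmetric average
$$\mu_\infty := \tfrac{1}{2}\mu_K + \tfrac{1}{2}S_*\mu_K,$$
and construct an $S$-invariant potential in $\F$ whose maximum is attained precisely on $K\cup S(K)$; the uniqueness of equilibrium states will then force $(m_{t\phi})_t$ to be $S$-symmetric for every $t$, and passing to the limit together with Lemma~\ref{lem:t3} will single out $\mu_\infty$ as the only possible ground state.

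Concretely, I would set
$$\phi(x)=\frac{1}{1+d(x,K\cup S(K))},$$
which is positive, $1$-Lipschitz, lies in $C_0(X)$, satisfies $\beta(\phi)\ge 1$, and, since $S$ is an isometric involution of $(X,d)$ commuting with the flow (the natural interpretation of an order-$2$ symmetry in this geometric setting), is $S$-invariant and attains its maximum precisely on $K\cup S(K)$. Hence $\phi\in\F$, so $m_{t\phi}$ is defined and unique for all $t>t_\phi$, and the $S$-invariance of $t\phi$ together with Theorem~\ref{thm:ppsES} forces $S_* m_{t\phi}=m_{t\phi}$; in particular every $\mu\in\cG_\phi$ is $S$-invariant.

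The final step is to identify the $S$-invariant elements of $\cG_\phi$. Applying Lemma~\ref{lem:t3} to the $\g$-invariant compact set $K\cup S(K)$, every $\mu\in\cG_\phi$ is a measure of maximal entropy for the restricted system on $K\cup S(K)$. Because $K$ and $S(K)$ are disjoint and invariant, any invariant probability on $K\cup S(K)$ decomposes as $\alpha\nu_1+(1-\alpha)\nu_2$ with $\nu_1,\nu_2$ invariant probabilities on $K$ and $S(K)$ respectively. By affinity of the entropy map and the fact that $S$ conjugates $\g|K$ with $\g|S(K)$ (so $h_{top}(\g|K)=h_{top}(\g|S(K))$ and the unique MME on $S(K)$ is $S_*\mu_K$), the MMEs of $K\cup S(K)$ are exactly $\{\alpha\mu_K+(1-\alpha)S_*\mu_K:\alpha\in[0,1]\}$. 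Since $K\cap S(K)=\emptyset$ forces $\mu_K\neq S_*\mu_K$, the only $S$-invariant member of this family is $\mu_\infty$, which is manifestly non-ergodic. Vague-compactness of $\M_{\le1}(\g)$ together with Lemma~\ref{lem:t1} then upgrade $\cG_\phi=\{\mu_\infty\}$ to narrow convergence of the whole family $(m_{t\phi})_t$ to $\mu_\infty$.

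The main obstacle I anticipate is a careful verification that the identification of MMEs on $K\cup S(K)$ goes through, namely that $S$ really conjugates the flow on $K$ with the flow on $S(K)$ (an assumption implicit in the statement) and that affinity of entropy applies here with measures supported on a compact invariant set; both are standard, but they are where the argument must make careful use of how $S$ interacts with the dynamics. Everything else reduces mechanically to the tools already developed in this section, chiefly Lemma~\ref{lem:t1}, Lemma~\ref{lem:t3}, and Proposition~\ref{prop:entropyGS}.
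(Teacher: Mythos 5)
Your proposal is correct and follows essentially the same route as the paper: the same potential $\phi(x)=1/(1+d(x,K\cup S(K)))$, the same use of $\phi\circ S=\phi$ together with uniqueness of equilibrium states (Theorem \ref{thm:ppsES}) to force $S_*m_{t\phi}=m_{t\phi}$, and Lemma \ref{lem:t3} to identify ground states as maximal-entropy measures on $K\cup S(K)$, yielding the symmetric average $\tfrac12(\mu_K+S_*\mu_K)$ as the unique limit. Your explicit decomposition-and-affinity step identifying the MMEs of $K\cup S(K)$ simply spells out what the paper leaves implicit, so no substantive difference remains.
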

\begin{proof}
Define $C=K\cup S(K)$ and set
$$\phi(x):=\frac{1}{1+d(x,C)}.$$
By construction, the potential $\phi$ attains its maximum exactly on $C$ and $\phi\in \F$. Moreover, we have $\phi\circ S = \phi$. Let $(t_n)_n\nearrow+\infty$ be a sequence of real numbers such that $(m_{t_n\phi})_n$ converge to $\mu\in \cG_\phi$ as $n\to+\infty$. Then
\begin{eqnarray*}
S_\ast \mu &=& S_\ast \left( \lim_{n\to+\infty} m_{t_n\phi}\right)=  \lim_{n\to+\infty} S_\ast m_{t_n\phi}\\
&=&  \lim_{n\to+\infty} m_{t_n(\phi\circ S)}=  \lim_{n\to+\infty} m_{t_n\phi}\\
&=& \mu.
\end{eqnarray*}
Hence, we have $S_\ast \mu =\mu$ and $h(\mu)=h_{top}(\g|C)=h_{top}(\g|K)$ by Lemma \ref{lem:t3}. In particular, we conclude that $\mu = \frac{1}{2} (\nu + S_\ast\nu)$,  where $\nu$ is the measure of maximal entropy for $(K,\g|_K)$. Note that $\mu$ is not ergodic, and since we found a formula for $\mu$, this measure is unique. 
\end{proof}

\begin{remark} Let $\iota:X\to X$ be the flip map, that is $\iota(x)=-x$, where $-(z,\vec{u})=(z,-\vec{u})$ for any $z\in M$ and $\vec{u}\in T^1_pM$. If $d_S$ denotes the Sasaki metric on $X$, then $\iota$ is an order 2 symmetry of $X$ relative to $d_S$. In particular, any negatively curved manifold admits a potential with unique non-ergodic ground state at zero temperature.
\end{remark}

\subsection{Non-unique ground states at zero temperature}\label{sec42} In this section we construct a potential $\phi\in\F$ for which $\cG_\phi$ has more than one element, or equivalently, so that the sequence $(m_{t\phi})_{t>t_\phi}$ is divergent as $t\to+\infty$. We saw in Corollary \ref{cor:t3} that ground states at zero temperature have preference to be supported on sets with large entropy (among components with largest value of the potential); this fact will play a fundamental role in the proof of the following result.


\begin{theorem}\label{theo_divergence} There exists a Lipschitz potential $\phi\in\F$ having at least two ground states at zero temperature.
\end{theorem}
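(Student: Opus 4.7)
The governing principle is Corollary \ref{cor:t3}: ground states at zero temperature concentrate on those components of the maximizing set of $\phi$ carrying the largest topological entropy. To force divergence, I would build a Lipschitz $\phi \in \F$ whose near-maximum level sets, examined at the natural scale $\epsilon \sim 1/t$, alternate between two disjoint dominant $\g$-invariant components as $t \to +\infty$, so that $(m_{t\phi})_{t}$ has no common vague limit.

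\textbf{Construction.} Since $\Gamma$ is non-elementary, I would first fix two disjoint compact $\g$-invariant hyperbolic sets $K_1, K_2 \subset X$ of positive topological entropy, e.g.\ two horseshoes coming from two independent ping-pong pairs of hyperbolic elements of $\Gamma$. Structural stability of hyperbolic sets then provides, for each $i \in \{1,2\}$, a decreasing sequence of $\g$-invariant compact thickenings $K_i^{(n)} \searrow K_i$ whose topological entropies $h_{top}(\g|K_i^{(n)})$ can be prescribed to converge to $h_{top}(\g|K_i)$ at a chosen rate. Now fix a strictly decreasing sequence $\epsilon_n \searrow 0$ and an alternating assignment $n \mapsto i(n) \in \{1,2\}$, with $j(n) = 3-i(n)$. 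Patching together Lipschitz ``bumps'' of the form $c - \lambda\, d(\,\cdot\,, K_{i(n)}^{(n)})$ with cut-offs that decay to $0$ at infinity, one obtains a Lipschitz $\phi \in \F$ with $\sup \phi = c > 0$, such that for every $n$ the super-level set $\{\phi \geq c-\epsilon_n\}$ is contained in a thin neighborhood of $K_{i(n)}^{(n)}$ and disjoint from $K_{j(n)}^{(n)}$.

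\textbf{Oscillation and conclusion.} For large $t$ the variational principle, combined with the upper semicontinuity of the entropy map (Theorem \ref{A}) and the uniqueness of equilibrium states (Theorem \ref{thm:ppsES}), forces $m_{t\phi}$ to be essentially supported on the level $n(t)$ that minimizes the ``cost'' $t\,\epsilon_n - h_{top}(\g|K_{i(n)}^{(n)})$, a competition between depth below $c$ and available entropy. By tuning the sequence $(\epsilon_n)_n$ and the rates $h_{top}(\g|K_i^{(n)}) \searrow h_{top}(\g|K_i)$, one can arrange that $i(n(t))$ flips along two diverging subsequences $t_k^\pm \to +\infty$, so that $m_{t_k^+\phi}$ concentrates vaguely on a neighborhood of $K_1$ and $m_{t_k^-\phi}$ on a neighborhood of $K_2$. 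Since $K_1 \cap K_2 = \emptyset$, the two vague accumulation points are distinct, hence $|\cG_\phi| \geq 2$.

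\textbf{Main obstacle.} The subtle part is making the pressure competition in the last step quantitative enough to actually shift the equilibrium measure between the two components at the designed crossover times $t_k^\pm$. I expect to need sharp two-sided bounds on $P(t\phi)$, obtained by comparison with the restricted pressures on the sets $K_i^{(n)}$, together with confinement arguments using the SPR condition valid for large $t$ (available from $\beta(\phi) > 0$ via Lemma \ref{lem:exx}) to rule out escape of mass and lateral leakage to the ``losing'' component. The quantitative freedom required from the sequences $h_{top}(\g|K_i^{(n)}) \searrow h_{top}(\g|K_i)$ should be achievable via horseshoe refinement, but if not, a variant of the argument can be run with deeper and deeper periodic orbits approaching $K_1$ and $K_2$ alternately, trading structural stability for an explicit count of closed geodesics.
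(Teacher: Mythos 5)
Your strategy is the right one and is essentially the one the paper follows (it is the scheme of Coronel--Rivera-Letelier adapted to the geodesic flow): two disjoint invariant compact sets approximated by nested towers of invariant compacta with interleaved entropies, a Lipschitz potential whose near-maximum level sets alternate between the two towers, and Corollary \ref{cor:t3} to force the equilibrium states to follow the alternation. However, the step you yourself flag as the ``main obstacle'' is precisely the content of the proof, and as written it is a genuine gap: a heuristic ``pressure competition'' $t\,\epsilon_n-h_{top}(\g|K_{i(n)}^{(n)})$ for the \emph{final} potential $\phi$ does not by itself control $m_{t\phi}$, because the deeper bumps (levels $m>n$) perturb the potential and hence the equilibrium state at time $t_n$, and nothing in your argument quantifies that perturbation. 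The paper avoids any direct two-sided estimate of $P(t\phi)$ by an inductive construction: at stage $k$ one first applies Corollary \ref{cor:t3} to the \emph{finite} sum $\varphi_k=\sum_{j\le k}\delta_j\varphi_j^{s(j)}$, whose maximizing locus is exactly known, to get a time $t_k$ with $m_{t_k\varphi_k}(U^{s(k)})>1-\varepsilon_k/3$; then a stability lemma (Lemma \ref{lem:div}: for fixed $\tau>t_\psi$ the map $\varphi\mapsto m_{\tau\varphi}$ is continuous at $\psi$ in the Lipschitz norm, proved via Theorem \ref{A} and uniqueness of equilibrium states) is used to choose $\delta_{k+1}$ so small that the entire tail $\sum_{j>k}\delta_j\varphi_j^{s(j)}$ cannot move $m_{t_k(\cdot)}$ by more than $\varepsilon_k/3$ against a test function. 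This ordering of quantifiers ($t_k$ first, then $\delta_{k+1}$ depending on $t_k$) is what makes the argument close; you would need to supply an equivalent of Lemma \ref{lem:div} to make your crossover times $t_k^{\pm}$ actually work for $\phi$ rather than for its finite truncations.

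A secondary, fixable issue is the construction of the thickenings $K_i^{(n)}\searrow K_i$. Structural stability of hyperbolic sets does not produce nested $\g$-invariant compact sets with prescribed, strictly interleaved entropies. The paper gets them group-theoretically: Schottky subgroups $\Gamma_n^{\pm}=\langle h_1^{\pm},(h_2^{\pm})^{2^n}\rangle$, whose projected non-wandering sets are the $K_n^{\pm}$, with $h_{top}(\g|K_n^{\pm})=\delta(\Gamma_n^{\pm})$ by Theorem \ref{thm:op} and Bowen's theorem on finite-to-one factors, and with $\delta(\Gamma_n^{\pm})$ strictly decreasing to $0$ by Proposition \ref{prop:CE}; the interleaving $h_{top}(\g|K_{n+1}^{-})<h_{top}(\g|K_n^{+})$ and $h_{top}(\g|K_{n+1}^{+})<h_{top}(\g|K_n^{-})$ is then arranged by re-enumeration. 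Your fallback suggestion (periodic orbits approaching $K_1$ and $K_2$ alternately) is closer in spirit to what is actually done, since in the paper the limit sets $K^{\pm}$ are themselves single closed orbits.
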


The construction of such a potential follows closely the proof of Corollary 1.1 in \cite{crl}, and it is based in the following steps. First, we consider two disjoint $\g$-invariant compact sets $K^-$ and $K^+$ having the same topological entropy. Both compact sets verify
$$K^{\pm}=\bigcap_{n\geq 1} K_n^{\pm},$$
where $(K_n^{\pm})_n$ is a decreasing sequence of $\g$-invariant compact sets. The compact sets of each sequence are defined in such a way that, if we define 
\begin{equation*}\label{eq:sets}
Y_{n+1}^+=K_n^+ \cup K_{n+1}^- \ \ \text{and} \ \ Y_{n+1}^-=K_{n+1}^+ \cup K_n^-,
\end{equation*}
then
$$h_{top}(\g|Y_{n+1}^+)=h_{top}(\g|K_n^+) \ \ \text{and} \ \ h_{top}(\g|Y_{n+1}^-)=h_{top}(\g|K_n^-).$$ 
We stress the fact that the topology of the symbolic space, which is the setting in \cite{crl}, makes the construction of the compact sets $K_n^{\pm}$ easier to obtain. In our case, we have to deal with the geometry of the manifold, so this step is done in a completely different way. Second, if we consider the potentials 
\begin{equation}\label{eq:potpm}
\phi_n^{\pm}(x) = \frac{1}{1+d(x,Y_n^{\pm})},
\end{equation}
then the corresponding ground states at zero temperature will be supported on $K_{n-1}^{\pm},$ for $n\geq 2$. Third, for a sequence $(\varepsilon_k)_k$ of real numbers decreasing to 0, there exist two sequences of real numbers $(\delta_k)_k$ decreasing to 0, and $(t_k)_k$ increasing to $+\infty$, such that if
\begin{equation}\label{eq:pot}
\phi_n(x)=\sum_{k=1}^n \delta_k \phi_k^{s(k)}(x),
\end{equation}
then $\phi_n$ converges (in the Lipschitz norm) to the desired potential $\phi$. Here $s(k)$ is the symbol $+$ or $-$ depending respectively on $k$ being odd or even. We also have
\begin{equation}\label{eq:div1}
\|\phi-\phi_n\|_{Lip} < \delta_n
\end{equation}
and
\begin{equation}\label{eq:div2}
m_{t_n\phi}(U^{s(n)})>1-\varepsilon_n,
\end{equation}
where $U^-$ and $U^+$ are two given disjoint neighborhoods of $K^-$ and $K^+$.  Thus for $n$ odd the sequence $(m_{t_n\phi})_n$ has limit measures supported on $K^+$, and for $n$ even $(m_{t_n\phi})_n$ has limit measures supported on $K^-$. We remark that the role of the numbers $\delta_k$ in the definition of $\phi_n$ is to exchange the locus of maximality of the potential, and therefore to alternate the support of the measures $m_{t\phi}$ for large $t>0$.\\

Before beginning the proof of Theorem \ref{theo_divergence} we will prove two preliminary results. The first one is related to the choice of the numbers $\delta_n$ and $t_n$ mentioned above, and the second  to the existence of the compact sets $K^\pm$ and $K_n^\pm$. 

For a potential $\phi:X\to\R$, we have $\|\phi\|_\infty= \sup_{x\in X}|\phi(x)|.$  Define
$$|\phi|_{Lip} := \sup\left\{ \frac{|\phi(x)-\phi(x')|}{d(x,x')} \ : \ x,x'\in X, \ x\neq x' \right\},$$
and 
$$\|\phi\|_{Lip}:= \|\phi\|_\infty + |\phi|_{Lip}.$$
A potential $\phi:X\to\R$ is Lipschitz continuous if $\|\phi\|_{Lip}<+\infty$. Finally note that the space of Lipschitz continuous potentials with the norm $\|\cdot\|_{Lip}$ is a Banach space.

\begin{lemma}\label{lem:div} Let $\psi\in\F$ be a Lipschitz potential and let $\tau>t_\psi$ be given. Then for any $\varepsilon>0$ and $f\in C_0(X)$ there exists $\sigma>0$ such that, whenever
$$\|\psi-\varphi\|_{Lip}<\sigma, \ \varphi\in\F,$$
we have
$$\left| \int f dm_{\tau\psi} - \int f dm_{\tau \varphi} \right|<\varepsilon.$$
\end{lemma}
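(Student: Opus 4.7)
The plan is to argue by contradiction using the compactness of $\M_{\le 1}(\g)$ in the vague topology, the uniqueness of equilibrium states for H\"older potentials (Theorem \ref{thm:ppsES}), and the upper semicontinuity of entropy along vaguely convergent sequences (Theorem \ref{A}). Specifically, suppose there exist $\varepsilon>0$, $f\in C_0(X)$ and a sequence $(\varphi_n)_n$ in $\F$ with $\|\varphi_n-\psi\|_{Lip}\to 0$ such that $\bigl|\int f\,dm_{\tau\psi}-\int f\,dm_{\tau\varphi_n}\bigr|\ge\varepsilon$ for all $n$. We will show that a subsequential vague limit of $(m_{\tau\varphi_n})_n$ must equal $m_{\tau\psi}$, contradicting this inequality thanks to Lemma \ref{c0top}.

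First, I would verify that the measures $m_{\tau\varphi_n}$ are actually defined for large $n$. Since $\|\varphi_n-\psi\|_\infty\le\|\varphi_n-\psi\|_{Lip}\to 0$, the standard bound $|P(\tau\varphi_n)-P(\tau\psi)|\le \tau\|\varphi_n-\psi\|_\infty$ gives $P(\tau\varphi_n)\to P(\tau\psi)$. Since $\tau>t_\psi$ we have $P(\tau\psi)>h_\infty(\g)$, so $P(\tau\varphi_n)>h_\infty(\g)$ eventually; by Lemma \ref{lem:exx} and Theorem \ref{thm:ppsES} the equilibrium state $m_{\tau\varphi_n}$ exists and is unique for such $n$.

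Next, by compactness of $\M_{\le 1}(\g)$ in the vague topology, pass to a subsequence so that $m_{\tau\varphi_n}\to\nu\in\M_{\le 1}(\g)$ vaguely. Since $\psi\in C_0(X)$, Lemma \ref{c0top} gives $\int\psi\,dm_{\tau\varphi_n}\to\int\psi\,d\nu$, and the uniform estimate $|\int\varphi_n\,dm_{\tau\varphi_n}-\int\psi\,dm_{\tau\varphi_n}|\le\|\varphi_n-\psi\|_\infty\to 0$ yields $\int\varphi_n\,dm_{\tau\varphi_n}\to\int\psi\,d\nu$. Applying Theorem \ref{A} to the entropies and using the variational principle one obtains
\begin{align*}
P(\tau\psi)=\lim_{n\to+\infty}\Bigl(h(m_{\tau\varphi_n})+\tau\!\int\!\varphi_n\,dm_{\tau\varphi_n}\Bigr) &\le \|\nu\|h(\overline{\nu})+(1-\|\nu\|)h_\infty(\g)+\tau\!\int\!\psi\,d\nu\\
&\le \|\nu\|P(\tau\psi)+(1-\|\nu\|)h_\infty(\g),
\end{align*}
where in the last step we used $\int\psi\,d\nu=\|\nu\|\int\psi\,d\overline{\nu}$ (trivial if $\|\nu\|=0$) and the variational principle. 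Since $P(\tau\psi)>h_\infty(\g)$, this forces $\|\nu\|=1$, and equality throughout shows that $\nu$ is an equilibrium state for $\tau\psi$. By the uniqueness statement in Theorem \ref{thm:ppsES}, $\nu=m_{\tau\psi}$.

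Finally, since $f\in C_0(X)$, Lemma \ref{c0top} gives $\int f\,dm_{\tau\varphi_n}\to\int f\,d\nu=\int f\,dm_{\tau\psi}$, contradicting the assumption. The main technical point to watch is ruling out escape of mass in the vague limit; this is precisely where the strict inequality $P(\tau\psi)>h_\infty(\g)$ (guaranteed by $\tau>t_\psi$) together with Theorem \ref{A} is used, and it is the only real obstacle. Everything else is a routine consequence of the continuity of pressure under uniform perturbations and Lemma \ref{c0top}.
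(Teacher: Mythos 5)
Your proof is correct and follows essentially the same route as the paper's: contradiction, vague compactness of $\M_{\le 1}(\g)$, continuity of pressure under uniform perturbation, Theorem \ref{A} to control entropy and rule out escape of mass via $P(\tau\psi)>h_\infty(\g)$, and uniqueness of the equilibrium state from Theorem \ref{thm:ppsES}. You are in fact slightly more careful than the paper in verifying that $m_{\tau\varphi_n}$ exists for large $n$ and in passing from $\int\varphi_n\,dm_{\tau\varphi_n}$ to $\int\psi\,d\nu$.
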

\begin{proof}
We argue by contradiction and assume that there exists $\varepsilon_0>0$ and $f\in C_0(X)$ such that there is a sequence $(\varphi_n)_n$ of Lipschitz potentials in $\F$ which converge to $\psi$ and $\left| \int f dm_{\tau \psi} - \int f dm_{\tau \varphi_n} \right| \geq \varepsilon_0.$ In particular $(m_{\tau \varphi_n})_n$ does not converge vaguely to $m_{\tau\psi}$ as $n\to+\infty$. We claim that this is not possible. Indeed, since the topological pressure is continuous with respect to the uniform norm, for $\nu\in\M_{\le1}(\g)$ an accumulation measure of  $(m_{\tau \varphi_n})_n$  we have
\begin{eqnarray*}
P(\tau \psi) &=& \lim_{n\to+\infty} P(\tau \varphi_n)\\
&\leq& \nu(X)h(\overline{\nu})+(1-\nu(X))h_\infty + \tau\int \psi d\nu\\
&\leq& \nu(X)P(\tau \psi)+ (1-\nu(X))h_\infty(\g)\\
&\leq& P(\tau\psi).
\end{eqnarray*}
In the last inequality we used that since $\tau>t_\psi$, then $P(\tau \psi)>h_\infty(\g)$. Note that the inequalities above imply $\nu(X)=1$ and that $\nu=m_{\tau \psi}$, which is a contradiction.
\end{proof}

We will now construct the compact sets described below the statement of Theorem \ref{theo_divergence}. A group $G\leqslant\text{Iso}(\widetilde{M})$ is a \emph{Schottky group} generated by two hyperbolic isometries $g_1,g_2\in \text{Iso}(\widetilde{M})$ if for each $i\in\{1,2\}$ there exists a compact neighborhood $C_i^+$ of the attracting fixed point of $g_i$, and a compact neighborhood $C_i^-$ of the repelling fixed point of $g_i$, such that 
$$g_i(\partial_\infty\widetilde{M}\setminus C_i^-)\subseteq C_i^+,$$
$$g_i^{-1}(\partial_\infty\widetilde{M}\setminus C_i^+)\subseteq C_i^-,$$
and the sets $C_1^+,C_1^-,C_2^+,C_2^-$ are pairwise disjoint. For such $G\leqslant\text{Iso}(\widetilde{M})$ we say that $\widetilde{M}/G$ is a \emph{Schottky manifold}. It is a well known fact that a Schottky manifold is convex cocompact. 

\begin{proposition}\label{prop:sets} There exists $\g$-invariant compact sets $K_n^\pm$, such that 
$$K^+=\bigcap_{n\geq 1} K_n^+ \ \ \text{and} \ \ K^-=\bigcap_{n\geq 1} K_n^-$$
have the same topological entropy, and for every $n\geq 1$
 \begin{enumerate}
 \item[(i)] $K_n^-\cap K_n^+=\emptyset$,
 \item[(ii)] $K_{n+1}^\pm \subseteq K_n^{\pm}$,
 \item[(iii)] $h_{top}(\g|K_{n+1}^-)< h_{top}(\g|K_n^+)$ and $h_{top}(\g|K_{n+1}^+)< h_{top}(\g|K_n^-)$.
 \end{enumerate} 
\end{proposition}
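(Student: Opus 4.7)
The plan is to build $K_n^\pm$ as the canonical compact $\g$-invariant subsets of $T^1M$ associated to suitably nested convex cocompact Schottky subgroups of $\Gamma$. First, using that $\Gamma$ is non-elementary, I would fix two hyperbolic elements $\gamma^\pm\in\Gamma$ whose axes project to disjoint periodic orbits $\cO^\pm\subseteq T^1M$, and two small disjoint compact regions $A^\pm\subseteq\partial_\infty\widetilde M$ surrounding the fixed points of $\gamma^\pm$. Inside each $A^\pm$ I would produce countably many additional hyperbolic elements $c_i^\pm\in\Gamma$ by a standard ping-pong/Schottky construction (after passing to sufficiently high powers if necessary), with pairwise disjoint Schottky neighborhoods contained in $A^\pm$, so that any finite subcollection of $\{\gamma^+\}\cup\{c_i^+\}$ (resp.\ $\{\gamma^-\}\cup\{c_i^-\}$) generates a Schottky subgroup of $\Gamma$.

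Second, I would fix strictly decreasing interlacing sequences $\alpha_1>\beta_1>\alpha_2>\beta_2>\cdots$ of positive reals with $\alpha_n,\beta_n\to 0$, and inductively define finite Schottky subgroups
$$G_n^+=\langle\gamma^+,c_1^+,\ldots,c_{k_n^+}^+\rangle,\qquad G_n^-=\langle\gamma^-,c_1^-,\ldots,c_{k_n^-}^-\rangle,$$
selecting the integers $k_n^\pm$ (and, if needed, higher powers of the generators) so that $\delta(G_n^+)\approx\alpha_n$ and $\delta(G_n^-)\approx\beta_n$ with error small enough to preserve the strict interlacing. Such calibration is possible because the set of achievable critical exponents of Schottky subgroups of a fixed ambient Schottky group can be made dense in a suitable interval by varying the number and powers of generators. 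The nesting $G_{n+1}^\pm\subseteq G_n^\pm$ is built in by only removing generators or raising their powers; since Schottky groups are divergent and a strict reduction of the generating set forces $L(G_{n+1}^\pm)\subsetneq L(G_n^\pm)$, Proposition~\ref{prop:CE} automatically yields $\delta(G_{n+1}^\pm)<\delta(G_n^\pm)$. I then set $K_n^\pm\subseteq T^1M$ to be the projection of the set of unit vectors in $T^1\widetilde M$ whose geodesic has both endpoints in $L(G_n^\pm)$; by convex cocompactness of $G_n^\pm$ and Theorem~\ref{thm:op} applied on the quotient $\widetilde M/G_n^\pm$, this is a compact $\g$-invariant subset of $T^1M$ with $h_{top}(\g|K_n^\pm)=\delta(G_n^\pm)$.

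Properties (ii) and (iii) then follow directly from the construction: $K_{n+1}^\pm\subseteq K_n^\pm$ because $L(G_{n+1}^\pm)\subseteq L(G_n^\pm)$, and the entropy interlacing is precisely the interlacing of $\alpha_n$ and $\beta_n$. For property (i), the key point is $L(G_n^\pm)\subseteq A^\pm$ with $A^+\cap A^-=\es$: after shrinking the reservoirs if necessary, every closed geodesic of $\widetilde M/G_n^\pm$ projects to a small tubular neighborhood of $\cO^\pm$ in $T^1M$, so $K_n^+$ and $K_n^-$ remain in disjoint tubular neighborhoods of the two periodic orbits. Finally $K^\pm=\bigcap_n K_n^\pm$ is a nonempty $\g$-invariant compact set (containing $\cO^\pm$) with $h_{top}(\g|K^\pm)\le\inf_n\delta(G_n^\pm)=0$, so both $K^\pm$ have topological entropy zero, hence equal.

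The main technical obstacle is the second step: one must simultaneously achieve a fine interlacing of critical exponents and keep the additional generators close enough to $\gamma^\pm$ for (i) to remain valid. Enlarging the generating set gives more flexibility for hitting the target values of $\delta$ but also enlarges the geometric support of $K_n^\pm$. The way to reconcile these is to first fix the reservoirs $A^\pm$ small enough that any Schottky subgroup generated from $\{\gamma^\pm\}\cup\{c_i^\pm\}$ lies in a prescribed tubular neighborhood of $\cO^\pm$, and then work entirely inside a fixed ambient Schottky group of sufficiently large critical exponent, in which the achievable critical exponents of finitely generated Schottky subgroups form a dense countable subset of an initial interval and are sufficient to realize the prescribed interlacing.
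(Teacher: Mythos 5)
Your skeleton is essentially the paper's: produce nested convex cocompact Schottky subgroups of $\Gamma$ anchored at two disjoint closed geodesics, let $K_n^\pm$ be the image in $X$ of the corresponding non-wandering sets, identify $h_{top}(\g|K_n^\pm)$ with the critical exponent of the group, get strict decay of the exponents from Proposition~\ref{prop:CE}, and note that the intersections $K^\pm$ are single periodic orbits, hence both of entropy zero. The paper realizes this with the two-generator groups $\Gamma_n^\pm=\langle h_1^\pm,(h_2^\pm)^{2^n}\rangle$, so that nesting and disjointness are immediate (disjointness after replacing the original generators by high powers).

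The genuine gap is in how you obtain the interlacing (iii). You want to calibrate $\delta(G_n^+)\approx\alpha_n$ and $\delta(G_n^-)\approx\beta_n$ by ``varying the number and powers of generators'', resting on an unproved density claim for the attainable critical exponents; moreover this calibration conflicts with the nesting you need: adding generators to push $\delta$ up enlarges the group, so $G_{n+1}^\pm\leqslant G_n^\pm$ forces you to only delete generators or raise powers, and then there is no reason the resulting countable set of exponents approximates a prescribed target $\alpha_{n+1}$ or $\beta_{n+1}$ within a given error. The calibration is also unnecessary: all that is needed is that raising the power of one generator makes $\delta$ strictly decrease (Proposition~\ref{prop:CE}) and tend to $0$ --- the convergence to $0$ is itself a point you assert rather than prove; the paper gets it from the computation of \cite[Proposition 5.3]{irv} with the parabolic element replaced by a hyperbolic one --- after which one simply re-enumerates (interleaves subsequences of) the two strictly decreasing null sequences to obtain the alternation in (iii). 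Finally, the identity $h_{top}(\g|K_n^\pm)=\delta(G_n^\pm)$ is not a direct application of Theorem~\ref{thm:op}, because $K_n^\pm$ lives in $X=T^1\widetilde{M}/\Gamma$ rather than in $T^1\widetilde{M}/G_n^\pm$; as in the paper, one should pass through the finite-to-one semiconjugacy from the non-wandering set of the geodesic flow on $T^1\widetilde{M}/G_n^\pm$ onto $K_n^\pm$ and invoke \cite[Theorem 17]{bo1} to see that no entropy is lost under the projection.
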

\begin{proof}
Recall that $X=T^1\widetilde{M}/\Gamma$, where $\Gamma\leqslant\text{Iso}(\widetilde{M})$. The non-elementary assumption on $\Gamma$ implies the existence of hyperbolic isometries $h_1^-, h_2^-, h_1^+$ and $h_2^+$ such that
$$\Gamma^+=\langle h^+_1,h^+_2 \rangle, \ \ \text{and} \ \ \Gamma^-=\langle h^-_1,h^-_2 \rangle$$
are Schottky groups (see \cite{dp} for further details). Let us denote by $p^\pm: T^1\widetilde{M}/\Gamma^\pm \to X$ the natural projections on the corresponding quotient manifolds. Up to large enough powers of the isometries, we can also assume that
$$p^-(\Omega_{\Gamma^-})\cap p^+(\Omega_{\Gamma^+}) = \emptyset,$$
where $\Omega_{\Gamma^{\pm}}$ is the non-wandering set of the geodesic flow on $T^1\widetilde{M}/\Gamma^{\pm}$. More generally, define
$$\Gamma^\pm_n = \langle h^\pm_1,(h^\pm_2)^{2^n} \rangle$$
and let
$$p^\pm_n: \widetilde{X}/\Gamma_n^\pm \to X$$
be the natural projection onto $X$. Finally, set
$$K^\pm_n = p^\pm_n (\Omega_{\Gamma^\pm_n}).$$ 
Note that 
$$K^+:=\bigcap_{n\geq 1} K_n^+ \ \ \text{and} \ \ K^-:=\bigcap_{n\geq 1} K_n^-$$
are the periodic orbits induced by the hyperbolic isometries $h_1^+$ and $h_1^-$ respectively. In particular, the topological entropies of $K^-$ and $K^+$ are both equal to 0. Properties $(i)$ and $(ii)$ follow immediately from the definition of the compact sets. 

To finish the proof of Proposition \ref{prop:sets} we will need the following result.

\begin{lemma} We have
\begin{equation}\label{eq:prop:1}
h_{top}(\g|K_n^\pm)=\delta(\Gamma_n^\pm)
\end{equation}
and
\begin{equation}\label{eq:prop:2}
\delta(\Gamma_n^\pm)\searrow  0 \text{ as } n\to+\infty.
\end{equation}
\end{lemma}
\begin{proof}
Note first that there is a finite-to-one semi-conjugacy between the geodesic flow $\g:T^1\widetilde{M}/\Gamma_n^\pm\to T^1\widetilde{M}/\Gamma_n^\pm$ restricted to its non-wandering set and $\g|_{K_n^\pm}$. By \cite[Theorem 17]{bo1} we know that the topological entropy of both dynamical systems coincide, and Theorem \ref{thm:op} imply that the exact value is $\delta(\Gamma_n^\pm)$; this proves \eqref{eq:prop:1}. 

To prove \eqref{eq:prop:2} we will use Proposition \ref{prop:CE}. Note first that $\delta(\Gamma_n^\pm)>0,$ because $\Gamma_n^\pm$ is non-elementary. Moreover, since $\Gamma_n^\pm$ is convex cocompact we know it is divergent and verifies $L(\Gamma_{n+1}^\pm)\neq L(\Gamma_n^\pm)$ for every $n\in \N$, so $\delta(\Gamma_{n+1}^\pm)<\delta(\Gamma_n^\pm)$. The fact that $\delta(\Gamma_n^\pm)$ converges to $0$ as $n\to+\infty$ follows from a calculation identical to the one used in the proof of \cite[Proposition 5.3]{irv}, where we replace the parabolic isometry $p$ by a hyperbolic element $h$, and $\delta(\langle p\rangle)$ by $\delta(\langle h\rangle)$, respectively.
\end{proof}

By the lemma above, up to a re-enumeration of the compact sets we constructed, we can alternate their topological entropies as desired. This finishes the proof of Proposition \ref{prop:sets}.  
\end{proof}

\begin{proof}[Proof of Theorem \ref{theo_divergence}] Let  $K^{\pm}$ and $K_n^{\pm}$ be the $\g$-invariant compact sets constructed in Proposition \ref{prop:sets}. We also consider $U^-$ and $U^+$ disjoint open neighborhoods of $K_1^-$ and $K_1^+$ respectively. Define $Y^{\pm}_n$ by
$$Y_{n+1}^+=K_n^+ \cup K_{n+1}^- \ \ \text{and} \ \ Y_{n+1}^-=K_{n+1}^+ \cup K_n^-.$$
Note that
$$\bigcap_{n\geq 1} Y_n^+ = \bigcap_{n\geq 1} Y_n^- = K:= K^+\cup K^-.$$
Consider the family of potentials $\varphi_n^{\pm}$ defined by equation \eqref{eq:potpm} and take any sequence of real numbers $(\varepsilon_k)_k$ converging to 0. We will construct the potentials $\varphi_n$ as in \eqref{eq:pot} inductively. Set $\delta_1=1$ and assume that $\delta_1,\ldots,\delta_k$ and $t_1,\ldots,t_{k-1}$ are given. Use Corollary \ref{cor:t3} for $\psi=\varphi_k$, $C^+=K^{s(k)}_n$, $C^-=Y^{s(k)}_{k+1}\setminus K^{s(k)}_k$, $\varepsilon=\varepsilon_k/3$ and $V^+=U^{s(k)}$, to obtain $t_k = \max\{T_C,t_{k-1}+k\}$. 
For every $k\geq 1$, let $f_k^\pm\in C_c(X)$ be such that
\begin{equation}\label{eq:phi}
f_k^\pm(x)=\left\{
    \begin{array}{rl}
      1 &, \text{ if } x\in K_k^\pm \\
      0 &, \text{ if } x\in X\setminus U^\pm,
    \end{array}
  \right.
\end{equation}
and
\begin{equation}\label{eq:phi2}
\int f_k^\pm dm_{t_k\varphi_k} \geq m_{t_k\varphi_k}(U^{\pm})-\frac{\varepsilon_k}{3}.
\end{equation}
Using Lemma \ref{lem:div} for $\psi=\varphi_k$, $\tau=t_k$, $\varepsilon=\varepsilon_k/3$ and $f=f_k^{s(k)}$, we define $\delta_{k+1}=\min\{\delta_k/2,\sigma\}$. 
We will now show that the formula
$$\varphi(x)=\sum_{k=1}^{\infty} \delta_k \varphi_k^{s(k)}(x)$$
defines a potential in $\F$ which attains its maximum exactly on $K$ and that verifies \eqref{eq:div1} and \eqref{eq:div2}. Note first that, by definition, we have $|\varphi_k^{\pm}|\leq 1$ for every $k\geq 1$. Since $\delta_k\leq 1/2^{k-1}$, the function $\varphi$ is pointwise well defined and $|\varphi(x)|\leq 2$ for every $x\in X$. A similar argument together with the fact that $\varphi_k^\pm$ is 1-Lipschitz for every $k\geq 1$, shows that $\varphi$ is a 2-Lipschitz potential. Note also that $Y_{n+1}^\pm\subseteq Y_n^\pm$, so $|\varphi_k^\pm(x)|\leq |\varphi_1^\pm|$ for every $k\geq 1$. In particular, the potential $\varphi$ vanishes at infinity. Also note  that $M(\varphi)>0$,  hence $\varphi\in \F$.

To prove \eqref{eq:div1}, observe that
\begin{eqnarray*}
\|\varphi-\varphi_n\|_\infty &=& \sup_{x\in X} |\varphi(x)-\varphi_n(x)| \leq \sup_{x\in X} \sum_{k=n+1}^\infty \delta_k|\varphi_k^{s(k)}(x)|\\
&\leq& \sup_{x\in X} \delta_{n+1} \sum_{k=n+1}^\infty \frac{\delta_k}{\delta_{n+1}}|\varphi_k^{s(k)}(x)|\\
&\leq& \sup_{x\in X} \delta_{n+1} \sum_{k=n+1}^\infty \frac{1}{2^{k-n}}\leq \delta_{n+1}.
\end{eqnarray*}
A similar argument shows that in fact we have the same inequality for the Lipschitz norm. Now we just need to prove that \eqref{eq:div2} holds. Using \eqref{eq:phi} and the conclusion of Lemma \ref{lem:div} we obtain
\begin{equation}\label{eq:f1}
m_{t_n\varphi}(U^{s(n)}) = \int \mathbbm{1}_{U^{s(n)}} dm_{t_n\varphi} \geq \int \varphi_n^{s(n)} dm_{t_n\varphi} \geq  \int \varphi_n^{s(n)} dm_{t_n\varphi_n} - \frac{\varepsilon_n}{3}. 
\end{equation}
Inequalities \eqref{eq:phi2} and \eqref{eq:f1} give us
\begin{equation}\label{eq:f2}
m_{t_n\varphi}(U^{s(n)}) \geq  m_{t_n\varphi_n}(U^{s(n)}) - \frac{2\varepsilon_n}{3}. 
\end{equation} 
Finally, the conclusion of Corollary \ref{cor:t3} states that
$$m_{t_n\varphi_n}(U^{s(n)}) \geq 1-\frac{\varepsilon_n}{3},$$
which together with inequality \eqref{eq:f2} imply \eqref{eq:div2}.
\end{proof}

\end{document}